\begin{document}
\numberwithin{equation}{section}
\newtheorem{thm}{Theorem}
\newtheorem{lemma}{Lemma}[section]
\newtheorem{clm}[lemma]{Claim}
\newtheorem{remark}[lemma]{Remark}
\newtheorem{definition}[lemma]{Definition}
\newtheorem{cor}[lemma]{Corollary}
\newtheorem{prop}[lemma]{Proposition}
\newtheorem{statement}[lemma]{Statement}


\newcommand{\hdt}{{\dot{\mathrm{H}}^{1/2}}}
\newcommand{\hdtr}{{\dot{\mathrm{H}}^{1/2}(\mathbb{R}^3)}}
\newcommand{\R}{\mathbb{R}}
\newcommand{\ei}{\mathrm{e}^{it\Delta}}
\newcommand{\ltrt}{{L^3(\mathbb{R}^3)}}
\newcommand{\ldrd}{{L^d(\mathbb{R}^d)}}
\newcommand{\lprd}{{L^p(\mathbb{R}^d)}}
\newcommand{\lt}{{L^3}}
\newcommand{\ld}{{L^d}}
\newcommand{\lp}{{L^p}}
\newcommand{\rt}{\mathbb{R}^3}
\newcommand{\rd}{\mathbb{R}^d}
\newcommand{\X}{\mathfrak{X}}
\newcommand{\F}{\mathfrak{F}}
\newcommand{\hdhalf}{{\dot H^\frac{1}{2}}}
\newcommand{\hdthalf}{{\dot H^\frac{3}{2}}}
\newcommand{\hdo}{\dot H^1}
\newcommand{\rthmiz}{\R^3\times(-\infty,0)}
\newcommand{\q}[2]{{#1}_{#2}}
\renewcommand{\t}{\theta}
\newcommand{\lxt}[2]{L_{x,\,t}^{#1}}
\newcommand{\rr}{\sqrt{x_1^2+x_2^2}}
\newcommand{\ve}{\varepsilon}
\newcommand{\hdhrt}{\dot H^\frac{1}{2}(\mathbb{R}^3)}
\renewcommand{\P}{\mathbb{P} }
\newcommand{\RR}{\mathcal{R} }
\newcommand{\TT}{\overline{T} }
\newcommand{\e}{\epsilon }
\newcommand{\D}{\Delta }
\renewcommand{\d}{\delta }
\renewcommand{\l}{\lambda }
\newcommand{\To}{\TT_1 }
\newcommand{\ukt}{u^{(KT)} }
\newcommand{\ttil}{\tilde{T_1} }
\newcommand{\etl}{e^{t\Delta} }
\newcommand{\et}{\mathscr{E}_T }
\newcommand{\se}{\mathscr{E}}
\newcommand{\ft}{\mathscr{F}_T }
\newcommand{\eti}{\mathscr{E}^{\infty}_T }
\newcommand{\fti}{\mathscr{F}^{\infty}_T }
\newcommand{\xjn}{x_{j,n}}
\newcommand{\xjpn}{x_{j',n}}
\newcommand{\ljn}{\l_{j,n} }
\newcommand{\ljpn}{\l_{j',n} }
\newcommand{\lkn}{\l_{k,n} }
\newcommand{\voj}{U_{0,j} }
\newcommand{\uoj}{U_{0,j} }
\newcommand{\voo}{U_{0,1} }
\newcommand{\uon}{u_{0,n} }
\newcommand{\N}{\mathbb{N} }
\newcommand{\Z}{\mathbb{Z} }
\newcommand{\E}{\mathscr{E} }
\newcommand{\tu}{\tilde{u} }
\newcommand{\tU}{\tilde{U} }
\newcommand{\etj}{{\E_{T^*_j}}}
\newcommand{\B}{\mathscr{B}}
\newcommand{\tujn}{\tilde{U}_{j,n}}
\newcommand{\tujpn}{\tilde{U}_{j',n}}
\newcommand{\soj}{\sum_{j=1}^J}
\newcommand{\soi}{\sum_{j=1}^\infty}
\newcommand{\hnj}{H_{n,J}}
\newcommand{\enj}{e_{n,J}}
\newcommand{\pnj}{p_{n,J}}
\newcommand{\lfoi}{L^5_{(0,\infty)}}
\newcommand{\lfhoi}{L^{{5/2}}_{(0,\infty)}}
\newcommand{\wnj}{w_n^J}
\newcommand{\rnj}{r_n^J}
\newcommand{\lfij}{{L^5_{I_j}}}
\newcommand{\lfiz}{{L^5_{I_0}}}
\newcommand{\lfio}{{L^5_{I_1}}}
\newcommand{\lfit}{{L^5_{I_2}}}
\newcommand{\lfhij}{{L^{{5/2}}_{I_j}}}
\newcommand{\lfhiz}{{L^{{5/2}}_{I_0}}}
\newcommand{\lfhio}{{L^{{5/2}}_{I_1}}}
\newcommand{\lfhit}{{L^{{5/2}}_{I_2}}}
\newcommand{\lfi}{{L^5_{I}}}
\newcommand{\lfhi}{{L^{{5/2}}_{I}}}
\newcommand{\doh}{D^\frac{1}{2}}
\newcommand{\tjn}{t_{j,n}}
\newcommand{\tjpn}{t_{j',n}}
\newcommand{\wnlj}{w_n^{l,J}}
\renewcommand{\O}{\mathcal{O}}
\newcommand{\bes}{{\dot B^{s_p}_{p,q}}}
\newcommand{\bespp}{{\dot B^{s_p}_{p,p}}}
\newcommand{\besrr}{{\dot B^{s_r}_{r,r}}}
\newcommand{\bespq}{{\dot B^{s_p}_{p,q}}}
\newcommand{\besqq}{{\dot B^{s_q}_{q,q}}}
\newcommand{\besa}{{\dot B^{s_{a}}_{a,b}}}
\newcommand{\besb}{{\dot B^{s_{p}}_{p,q}}}
\newcommand{\besr}{{\dot B^{s_{b}+\frac{2}{\rho}}_{b,r}}}
\newcommand{\tn}{{\tilde \|}}
\newcommand{\tp}{{\tilde{\phi}}}
\newcommand{\xto}{\xrightarrow[n\to\infty]{}}
\newcommand{\LL}{\Lambda}
\newcommand{\binf}{{\dot B^{-{d/p}}_{\infty,\infty}}}
\newcommand{\brq}{{\dot B^{s_{p,r}}_{r,q}}}
\newcommand{\bpp}{{\dot B^{s_{p}}_{p,p}}}
\newcommand{\bqq}{{\dot B^{s_{q}}_{q,q}}}
\newcommand{\trnj}{{ \tilde{r}_n^J}}
\newcommand{\vn}[1]{v^{(#1)}}
\newcommand{\vl}[1]{v_L^{(#1)}}
\newcommand{\tvl}[1]{{\tilde v}_L^{(#1)}}
\newcommand{\bvl}[1]{{\bar v}_L^{(#1)}}
\newcommand{\tvn}[1]{{\tilde v}^{(#1)}}
\newcommand{\bn}[3]{B^{(#1)}(#2,#3)}
\newcommand{\lr}{\mathscr{L}^r}
\renewcommand{\ln}[1]{\mathscr{L}^{#1}}
\newcommand{\tl}{\mathcal{L}}
\newcommand{\cl}{\mathscr{L}}
\newcommand{\ck}{\mathscr{K}}
\newcommand{\ol}{\widehat{\mathscr{L}}}

\def\longformule#1#2{
\displaylines{ \qquad{#1} \hfill\cr \hfill {#2} \qquad\cr } }
\def\sumetage#1#2{
\sum_{\scriptstyle {#1}\atop\scriptstyle {#2}} }
\def\longformule#1#2{ \displaylines{\qquad{#1} \hfill\cr \hfill {#2} \qquad\cr } }
\def\L {\mathscr L}
\title[Blow-up of Navier-Stokes critical Besov norms]{Blow-up of critical Besov norms at a\\ potential Navier-Stokes singularity}
\author{Isabelle Gallagher}
\address{Institut de Math\'ematiques de Jussieu-Paris Rive Gauche UMR
  CNRS 7586\\
Universit\'e Paris-Diderot\\
B\^atiment Sophie Germain \\
Case 7012
75205 PARIS Cedex 13, FRANCE}
\email{gallagher@math.univ-paris-diderot.fr}

\author{Gabriel S. Koch}
\address{Department of Mathematics\\ University of Sussex\\
Brighton, BN1 9QH, United Kingdom}
\email{g.koch@sussex.ac.uk}
\author{Fabrice Planchon}
\address{Laboratoire J. A. Dieudonn\'e, UMR CNRS 7351\\
Universit\'e Nice Sophia-Antipolis\\
Parc Valrose\\
06108 Nice Cedex 02\\
FRANCE }
\email{fabrice.planchon@unice.fr}
  \thanks{The first  author was partially supported by
the A.N.R grant ANR-12-BS01-0013-01 ``Harmonic Analysis at its Boundaries", as well as  the Institut Universitaire de France.
The second author was partially supported by the
EPSRC grant EP/M019438/1, ``Analysis of the Navier-Stokes regularity
problem".  The third author was partially supported by A.N.R. grant GEODISP as
well as the Institut Universitaire de France}
\date{}
\maketitle
\begin{abstract}
We prove that if an initial datum to the incompressible Navier-Stokes equations  in any critical Besov space~$\dot B^{-1+\frac 3p}_{p,q}(\R^3)$, with~$3 <p,q< \infty$, gives rise to
a strong solution with a singularity at a finite time~$T>0$, then the norm of the solution in that Besov space becomes unbounded at time~$T$.
This result, which treats all critical Besov spaces where local existence is known,  generalizes the result of  Escauriaza, Seregin and \v Sver\'
ak (Uspekhi Mat. Nauk 58(2(350)):3-44, 2003) concerning suitable weak
solutions blowing up in~$L^3(\R^3)$.  Our proof uses profile
decompositions and is based on our previous work
(Math. Ann. 355(4):1527--1559, 2013) which provided an alternative
proof of the $L^3(\R^3)$ result.  For very large values of $p$,   an
iterative method, which may be of independent interest, enables us to use some techniques from the~$L^3(\R^3)$ setting.
\end{abstract}
\tableofcontents

\pagebreak

\section{Introduction}
\subsection{The Navier-Stokes blow-up problem in critical spaces}
Consider the following  Navier-Stokes equations governing the velocity vector field $u(\cdot,t) : \R^3 \to \R^3$ (and scalar pressure $\pi$) of an incompressible, viscous, homogeneous fluid:
$$
{\rm(NS)} \quad  \left\{
\begin{array}{rcl}
\partial_t u + (u\cdot \nabla) u -\Delta u&=& -\nabla \pi  \quad \mbox{in} \:
\R^3 \times (0,T) \\
\mbox{div} \, u&=& 0\\
u_{|t=0} &=& u_{0}\, .
\end{array}
\right.
$$
The spaces $X$ appearing in the chain of continuous embeddings (see Definition~\ref{d1} below)
\begin{equation}\label{embed}
\dot H^{\frac{1}2}(\rt) \hookrightarrow L^3 (\rt) \hookrightarrow \dot B^{-1+\frac 3p}_{p,q}(\rt) \hookrightarrow \dot B^{-1+\frac 3{p'}}_{p',q'}(\rt)
\end{equation}
$$ (3<p \leq p' < \infty \, ,\,  3 < q\leq q'<\infty)$$
are all critical with respect to the Navier-Stokes scaling in that
$\|u_{0,\l}\|_X \equiv \|u_0\|_X$ for all $\l >0$,
where~$u_{0,\lambda}(x):= \lambda u_0(\lambda x)$ is the initial datum
which evolves as~$u_{\lambda}(t,x):= \lambda u(\lambda^2t, \lambda
x)$, as long as~$u_0(x)$ is the initial datum for the solution
$u(x,t)$.  While the larger spaces $\dot B^{-1+ \frac 3p}_{p,\infty}$,
$\mathrm{BMO}^{-1}$ and~$\dot B^{-1}_{\infty,\infty}$ are also
critical spaces and global wellposedness is known for the first two
for small enough initial data in those spaces thanks
to~\cite{Cannone,Planchon,KT}  (but only for finite~$p$ in the Besov
case, see~\cite{bourgainpavlovic}), the ones in the chain above
guarantee the existence of local-in-time solutions for any initial
datum.  Specifically, there exist corresponding ``adapted path" spaces
$X_T = X_T (\R^3 \times (0,T))$ such that for any $u_0 \in X$, there
exists $T>0$ and a unique ``strong" (or sometimes denoted ``mild") solution~$u$ belonging to~$ X_T $ to the corresponding Duhamel-type integral equation
\begin{equation}\label{NSduhamel}
\begin{aligned}
u(   t) &= \etl u_0+ \int_0^t e^{(t-s)\D} \mathbb{P} \nabla \cdot (u( s) \otimes u(   s))\, ds \\
&= \etl u_0 + B(u,u) (t) \, ,
\end{aligned}
\end{equation}
where
$$
(f\otimes g)_{j,k}:=f_jg_k \, , \quad [\nabla \cdot (f\otimes g)]_j:=\sum_{k=1}^3 \partial_k(f_jg_k) \,   \,  \mbox{and}  \,   \,
\mathbb{P}f:=f + {\nabla (-\Delta)^{-1}(\nabla \cdot f)} \, ,
$$
 which results from applying the projection onto
divergence-free vector fields operator $\mathbb{P}$  to (NS) and solving the resulting
nonlinear heat equation.  Moreover, $X_T$ is such that any $u\in X_T$ satisfying (\ref{NSduhamel}) belongs to $\mathcal{C}([0,T];X)$. Setting
\begin{equation}\label{deftstargen}
T^*_{X_T}(u_0):= \sup \{T>0 \, | \, \exists ! \, u:=NS(u_0) \in X_T \, \textrm{solving (\ref{NSduhamel})} \}
\end{equation}
to be the ``blow-up time" (if it is finite, or ``maximal time of existence") of the solution evolving from $u_0\in X$, we are interested in the following question:
\\\\
{\bf Question:}\label{thequestion}
$$\textrm{Does} \, \, \sup_{0<t<T^*_{X_T}(u_0)}\|u(\cdot , t)\|_X < \infty \quad \textrm{imply that} \quad T^*_{X_T}(u_0) = +\infty\, ?$$
Put another way, must the spatial $X$-norm of a solution become unbounded (``blow up") near a finite-time singularity?
\\\\
In the important work \cite{ess} of Escauriaza-Seregin-Sverak, it was established that for $X=L^3(\rt)$, the answer is yes (in the setting of Leray-Hopf weak solutions of (NS))\footnote{In the $L^3(\rt)$ setting of \cite{ess}, it was recently shown in \cite{seregin1} that moreover (for Leray-Hopf weak solutions) one can replace $\limsup_{t\to T^*}\|u(t)\|_{L^3} = \infty$ by $\lim_{t \to T^*}\|u(t)\|_{L^3} = \infty$ for a singular time $T^*<\infty$.}.
This extended a result in the foundational work of Leray \cite{leray} regarding the blow-up of $L^p(\rt)$ norms at a singularity with $p$ strictly greater than $3$, and of the ``Ladyzhenskaya-Prodi-Serrin" type mixed norms $L^s_t(L^p_x)$, $\frac 2s + \frac 3p =1$, $p>3$ (which follows from Leray's result), establishing a difficult ``endpoint" case of those results (as well as generalizing the result \cite{nrs} ruling out self-similar singular Leray-Hopf solutions which had been conjectured to exist in \cite{leray}).
\\\\
In our previous paper \cite{GKP}, based on the work \cite{kk} for $X=\dot H^{\frac 12}(\rt)$, we gave an alternative proof of this result in the setting of strong solutions using the method of ``critical elements" of C. Kenig and F. Merle, and in this work we extend the method in \cite{GKP} to give a positive answer to the above question for $X = \dot B^{-1+\frac 3p}_{p,q}(\rt)$
for all $3<p,q<\infty$ (see Theorem \ref{mainthm} below). In such
functional settings, our argument appears in a natural way, building
upon the local Cauchy theory, whereas extending \cite{ess} directly is in no
way straightforward.  An
important part of the proof here draws upon the intermediate result  \cite{chemplan} giving a positive answer for the same spaces in a certain range of values of $q<3$, and with an additional regularity assumption on the data.
\\\\
After completion of the present work, we learned of the very recent
work \cite{phuc}, which extends \cite{ess} to $X=L^{3,q}(\rt)$, the
Lorentz space with $3<q<+\infty$, in the context of Leray-Hopf weak solutions. In view of the embedding
$L^{3,q}(\rt)\hookrightarrow  \dot B^{-1+\frac 3p}_{p,q}(\rt)$, our
approach relies on a weaker a priori bound, but the setting and
the notion of solutions used in both works are not directly comparable.

\subsection{Besov spaces, local existence and statement of main result}
Let us first recall the definition of  Besov spaces, in dimension~$d \geq 1$.
\begin{definition}
\label{d1}
Let $\phi$ be a function in $\mathcal{S} (\R^d)$
such that $\widehat\phi(\xi) =
1$ for $|\xi|\leq 1$ and~$\widehat\phi(\xi)= 0$ for~$|\xi|>2$, and define
$
\phi_{j}(x):= 2^{dj}\phi(2^{j}x).$
Then the frequency localization operators are defined by
$$
S_{j} := \phi_{j}\ast\cdot \, , \quad
\Delta_{j} := S_{j+1} - S_{j} \, .
$$
Let $f$ be in $\mathcal{S}' (\R^d)$. We say $f$ belongs to~$\dot
B^{s}_{p,q}=\dot
B^{s}_{p,q}(\R^d)$ if
\begin{itemize}
\item[(i)] the partial sum $\displaystyle \sum^{m}_{-m} \Delta_{j} f $ converges to $f$
  as a tempered distribution if~$s < d/p$ and after taking
  the quotient with polynomials if not, and
\item[(ii)] $\displaystyle{\|f\|_{\dot B^s_{p,q}}:= \big\| 2^{js}\| \Delta_{j}  f \|_{L^{p}_x} \big\|_{\ell^q_j} <\infty}$.
\end{itemize}
\end{definition}
\noindent Note that there is an   equivalent formulation of Besov spaces by the heat flow:  defining the operator~$K_b(\tau):=(\tau \partial_\tau)^b
e^{\tau\Delta}$, then, for $b=1$, $s<2$,
\begin{equation}\label{besequiv} \|f\|_{\dot B^{s}_{p,q}} \sim
\left\| \|\tau^{-s/2} K_1(\tau) f\|_{L^p}\right\|_{L^q(\mathbb{R}^+,
\frac{d\tau}{\tau})}\,,
\end{equation}
and when $s<0$, the previous equivalence holds with $b=0$; in the local
well-posedness theory for (NS) with $L^3(\R^3)$ data, this equivalence
provides a natural link between heat decay and Besov spaces (see e.g. \cite{Planchon}).
\\\\
We shall also need a slight modification of those spaces, introduced in~\cite{cheminlerner}, taking into
account the time variable.
\begin{definition}
  \label{raah11}
  Let $u(\cdot,t)\in \dot B^s_{p,q}$ for a.e. $t\in (t_{1},t_{2})$ and let $\Delta_j$ be a
  frequency localization with respect to the $x$ variable (see Definition~\ref{d1}). We shall say
  that $u$ belongs to~${\tl}^{\rho}([ t_{1},t_{2} ] ;\dot{B}^{s}_{p,q})$   if
$$
\|u\|_{{\tl}^{\rho}([ t_{1},t_{2} ] ;\dot{B}^{s}_{p,q})} :=  \|2^{js}\|\Delta_j u\|_{L^\rho([ t_{1},t_{2} ];L^{p}_x)} \|_{\ell^q} <\infty \, .
$$
\end{definition}
\noindent
Note that for $1\leq \rho_1 \leq q\leq \rho_2 \leq \infty$, by Minkowski's and H\"older's inequalities (and Fubini's theorem when $\rho_1 = \rho_2 = q$) we have
\begin{equation}\label{minkowski}
{L}^{\rho_1}([ t_{1},t_{2} ] ;\dot{B}^{s}_{p,q}) \hookrightarrow
{\tl}^{\rho_1}([ t_{1},t_{2} ] ;\dot{B}^{s}_{p,q}) \hookrightarrow
{\tl}^{\rho_2}([ t_{1},t_{2} ] ;\dot{B}^{s}_{p,q}) \hookrightarrow
{L}^{\rho_2}([ t_{1},t_{2} ] ;\dot{B}^{s}_{p,q}) \, .
\end{equation}
Let us also introduce the following notations: we define~$s_p := -1 + \tfrac3p$ and
\begin{equation}\label{scrldef}
\begin{aligned}
 {\mathscr L}^{a:b}_{p,q}(t_1,t_2)&:= \tl^a([t_1,t_2];{\dot B^{s_{p}+ \frac 2 a}_{p,q}}) \cap
\tl^{b} ([t_1,t_2];\dot B^{s_{p}+ \frac 2 b}_{p,q}) = \bigcap_{a\leq r \leq b} \tl^{r} ([t_1,t_2];\dot B^{s_{p}+ \frac 2 r}_{p,q})\, ,\\
{\mathscr L}^{a:b}_{p}&:=  {\mathscr L}^{a:b}_{p,p}\, , \quad
{\mathscr L}^{a}_{p,q} :=  {\mathscr L}^{a:a}_{p,q}\, , \quad {\mathscr L}^{a}_{p} :=  {\mathscr L}^{a:a}_{p,p}
\, , \quad {\mathscr L}^{a:b}_{p,q}(T):={\mathscr L}^{a:b}_{p,q}(0,T)
\\
& \textrm{and} \quad
\mathscr{L}^{a:b}_{p,q}[T<T^*]:=\bigcap_{0<T<T^*}\mathscr{L}^{a:b}_{p,q}(T)
\, .
\end{aligned}
\end{equation}

\begin{remark}
Notice that the spaces~$ {\mathscr L}^{a:b}_{p,q}$ are natural in this
context since the norm in~$ {\mathscr L}^{a:b}_{p,q}(\infty)$ is
invariant through the scaling transformation~$u \mapsto
u_\lambda$. We also carefully point out that according to our notations, $u\in \mathscr{L}^{a:b}_{p,q}[T<T^*]$
merely means that $u\in \mathscr{L}^{a:b}_{p,q}(T)$ for each fixed $T<T^*$ and does not imply that $u\in \mathscr{L}^{a:b}_{p,q}(T^*)$ (the notation does not imply any uniform control as $T\nearrow T^*$).
\end{remark}
\noindent
For the convenience of the reader, we have collected the standard estimates relevant to Navier-Stokes (heat estimates, paraproduct estimates and embeddings via Bernstein's inequalities) in these spaces in Appendix \ref{paraproductsapp}.
\\\\
Let us now recall more precisely the main results on the Cauchy problem for (NS) in the setting of Besov spaces. For any divergence-free initial datum~$u_0$ in~$X:= \dot{B}^{s_p }_{p,q}$, with
$3<p<\infty$ and~$1\leq q < \infty$,
it is known (see \cite{Cannone} for $3<p\leq 6$ and~\cite{Planchon} for all
$p<+\infty$) that there is a unique solution  to (\ref{NSduhamel}),
which we shall denote by~${\rm NS}(u_0)$, which belongs to~$X_T:=
{\mathscr L}^{1:\infty}_{p,q}(T)$ for some $T>0$.  Moreover for a
fixed $p$ and $q$,
\begin{equation}\label{timecontinuous}
\left .
\begin{array}{c}
u_0\in \bespq,\, T>0,\\
u_1, u_2 \, \textrm{satisfy}\, (\ref{NSduhamel})\, \textrm{in}\, \cl^{1:\infty}_{p,q}(T)
\end{array}
\right\}
 \, \, \Longrightarrow \, \, u_1 = u_2 \in \mathcal{C}([0,T];\bespq)\cap \mathcal{C}^\infty(\rt \times (0,T])\, .
\end{equation}
Actually uniqueness holds in the class ~${\mathscr L}^{2+\e}_{p,q}(T)$ for any given $\e<6/(p-3)$,
but we shall not be using that fact here.
We also recall that there is a positive constant~$c_0$  such that if
the initial datum $u_0$ satisfies~$\|u_0\|_{ \dot{B}^{s_p }_{p,q}} \leq
c_0$, then~$NS(u_0)$ is a global solution and belongs
to~${\mathscr L}^{1:\infty}_{p,q}(\infty)$. Moreover it is known
(see~\cite{gip3} and \cite{adt} for the corresponding endpoint result
with data in $\mathrm{BMO}^{-1}$) that any   solution belonging to~${\mathscr L}^{1:\infty}_{p,q}[T <\infty]$, with notation~(\ref{scrldef}), actually belongs to~${\mathscr L}^{1:\infty}_{p,q}(\infty)$ and satisfies
\begin{equation}\label{limTinfty}
\lim_{t \to \infty} \|{\rm NS}(u_0)(t)\|_{ \dot{B}^{s_p }_{p,q}} = 0 \, .
\end{equation}
Note that by definition (\ref{deftstargen}) with $X_T={\mathscr L}^{1:\infty}_{p,q}(T)$ and in view of (\ref{limTinfty}), the maximal existence time $T^*= T^*_{{\mathscr L}^{1:\infty}_{p,q}(T)}(u_0)$
satisfies
\begin{equation}\label{deftstar}
T^* < \infty \, \, \, \iff \, \, \, \exists \rho \in [1,\infty) \, , \quad \lim_{t \to T^*} \|{\rm NS}(u_0)\|_{ {\tl}^{\rho}([ 0,t] ;\dot{B}^{s_p + \frac2\rho}_{p,q})} = \infty \, .
\end{equation}
Moreover, it is well-known (due to the embeddings (\ref{embed}) and ``propagation of regularity" results, cf., e.g., \cite{gip3})
that
\begin{equation}\label{tstarindep}
T^*_{ {\mathscr L}^{1:\infty}_{p,q}(T)}(u_{0})\ \  \textrm{is independent of}\ p \ \textrm{and} \ q \ \textrm{for any}\ p,q \in (3,\infty).
\end{equation}
Therefore, we will denote it by $T^{*}(u_{0})$ or just $T^{*}$ when
there is no ambiguity over the data.
\\\\
Our aim is to prove that in fact $$
T^* < \infty \, \, \, \Longrightarrow \, \, \, \limsup_{t \to T^* } \|{\rm NS}(u_0)(t)\|_{ \dot{B}^{s_p }_{p,q}} = \infty\,.
$$
(The converse already follows from (\ref{limTinfty}).)  More precisely, in this paper we prove the following theorem, which gives an affirmative answer to the   question raised on page~\pageref{thequestion} in the Besov space setting.
\begin{thm}\label{mainthm}
Let~$p, q \in (3,\infty)$ be given, and
consider a divergence free vector field~$u_0 $ in~$ \dot B^{s_p}_{p,q}$. Let~$u=NS(u_0) \in  {\mathscr L}^{1:\infty}_{p,q}[T<T^*]$
be the unique strong Navier-Stokes solution of (\ref{NSduhamel}) with maximal time of existence  $T^*$.  If $T^*<\infty$, then
$$
\limsup_{t \to T^*} \|u(t)\|_{ \dot B^{s_p}_{p,q}} = \infty \, .
$$
\end{thm}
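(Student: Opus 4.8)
The plan is to argue by contradiction, following the "critical element" scheme of \cite{GKP} and \cite{kk}. Suppose there is a solution $u = NS(u_0)$ with $T^* < \infty$ but
$$
M := \limsup_{t\to T^*} \|u(t)\|_{\dot B^{s_p}_{p,q}} < \infty\,.
$$
One first reduces to a critical-norm blow-up statement: by \eqref{deftstar}, $T^* < \infty$ forces some Strichartz-type norm $\|u\|_{\tl^\rho([0,t];\dot B^{s_p+2/\rho}_{p,q})}$ to diverge as $t \to T^*$; the assumed boundedness of the scale-invariant pointwise-in-time norm is the hypothesis one wishes to contradict. The heart of the matter is the following rigidity: if $u_{0,n}$ is a sequence of data bounded in $\dot B^{s_p}_{p,q}$ whose solutions each blow up at some finite time, then (after rescaling so the blow-up time is comparable to $1$ and translating) a subsequence converges, and one extracts a nonzero "minimal" blow-up profile. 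To make this work one applies the profile decomposition for bounded sequences in $\dot B^{s_p}_{p,q}$ (as in \cite{GKP}, \cite{chemplan}): write $u_{0,n} = \sum_{j=1}^J \Lambda_{j,n} U_{0,j} + r_n^J$ with the usual orthogonality of the scales/cores $(\lambda_{j,n}, x_{j,n}, t_{j,n})$ and smallness of the remainder in $\dot B^{-1}_{\infty,\infty}$ (or the relevant Strichartz norm) as $J, n \to \infty$.

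Next comes the nonlinear superposition / stability step. Using the local Cauchy theory in $\mathscr L^{1:\infty}_{p,q}$ and the perturbation (stability) lemma in these spaces — which is what the appendix's heat and paraproduct estimates are set up to provide — one shows that if \emph{all} profiles $U_{0,j}$ generated global-in-time solutions with finite $\mathscr L^{1:\infty}_{p,q}(\infty)$ norm, then so would $u_{0,n}$ for $n$ large, contradicting blow-up. Hence at least one profile, say $U_{0,1}$, has a finite blow-up time; minimality of $M$ (chosen as the infimum of critical norms over all data whose solutions blow up in finite time) forces the decomposition to be trivial, i.e. $u_{0,n}$ is itself, up to the symmetries, a single profile converging strongly in $\dot B^{s_p}_{p,q}$. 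Applying this to $u_{0,n} := u(t_n)$ for a sequence $t_n \nearrow T^*$ along which $\|u(t_n)\|_{\dot B^{s_p}_{p,q}} \to M$ produces a limiting "critical element" $v$: a solution, bounded in $\dot B^{s_p}_{p,q}$, blowing up at a finite time, and with a compactness property for its trajectory modulo the scaling and translation group.

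The final step is to kill the critical element by a rigidity argument. Here is where the work genuinely departs from \cite{GKP}: the backward-uniqueness and unique-continuation machinery of \cite{ess} was tailored to $L^3$ (indeed to an energy setting), and is "in no way straightforward" to port to a general critical Besov space. The route indicated in the introduction is to first invoke the intermediate result \cite{chemplan}, which handles $\dot B^{-1+3/p}_{p,q}$ for a range of $q < 3$ under an extra regularity assumption, and to transfer the problem into that range. For moderate $p$ this can presumably be done by interpolation/embedding once enough regularity of the critical element is known (the trajectory is smooth on $(0,T^*)$ by \eqref{timecontinuous}, and its compactness gives uniform bounds). For \emph{very large} $p$ the embeddings degrade, and this is exactly where the announced iterative scheme enters: one bootstraps, step by step lowering the effective integrability exponent, each iteration using the compactness of the critical element's trajectory together with the bilinear estimates of the appendix, until one lands in a space where \cite{chemplan} (hence ultimately \cite{ess}) applies and forces $v \equiv 0$ — contradicting that $v$ blows up. I expect this last transference step, and in particular making the iteration close quantitatively for all $p < \infty$, to be the main obstacle; the profile decomposition and the nonlinear stability estimates, while technical, are by now standard in this circle of ideas.
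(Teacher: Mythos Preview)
Your outline of the profile decomposition and of the construction of a critical element is essentially correct and matches the paper's Step~1. However, there are two genuine gaps in the remaining steps.

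\textbf{The compactness step is misstated.} You write that minimality forces the profile decomposition of $u(t_n)$ to be trivial, yielding a critical element $v$ whose trajectory is precompact modulo scaling and translation. That is the NLS-style conclusion, but it is \emph{not} what the paper proves, and it is not clear how one would prove it here. What the paper actually shows (Proposition~\ref{compactcrit}) is weaker and more specific: applying the profile decomposition to $u(t_n)$ with $t_n\nearrow T^*$, one shows that the profile with scale $\lambda\equiv 1$ and core $x\equiv 0$ (i.e.\ the weak limit of $u(t_n)$) must vanish, because the entire critical norm budget $A_c$ is already consumed by the first (blowing-up) profile, whose scale $\lambda_{1,n}\to 0$. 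The conclusion is therefore $u(t)\to 0$ in $\mathcal{S}'$ as $t\nearrow T^*$, not orbital compactness.

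\textbf{The rigidity step is misconceived.} You propose to iterate in order to land in a Besov space with small $q$ where \cite{chemplan} applies, and then let \cite{chemplan} (hence \cite{ess}) finish the job. This is not the route taken, and it is unclear it would work: \cite{chemplan} requires an extra regularity hypothesis on the data that you have not arranged. What the paper actually does (Proposition~\ref{lemmaimprovedb}, proved via the iteration of Theorem~\ref{expandprop}) is to decompose the bounded solution near $T^*$ as $u=v+w$ with $v\in L^\infty_t L^p_x$ and $w\in L^3_t L^3_x$. This is the key output of the iteration: starting from a negative-regularity bound one gains, in finitely many steps, a piece with \emph{positive} Sobolev regularity (the $L^p$ part) plus a piece with global $L^3$ integrability. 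With this in hand, $u$ and the pressure $\pi$ are locally in $L^3\times L^{3/2}$ with spatial decay of the local norms, so $\varepsilon$-regularity (Caffarelli--Kohn--Nirenberg) gives smoothness of $u$ at time $T^*$ outside a large compact set $K$. Combined with the compactness step $u(t)\to 0$ in $\mathcal{S}'$, one gets $u(\cdot,T^*)\equiv 0$ on $K^c$, and then backward uniqueness and unique continuation for the vorticity (exactly as in \cite{ess}, \cite{kk}) force $u\equiv 0$ at an earlier time --- a contradiction. The iteration is thus a bridge from the Besov bound to the $\varepsilon$-regularity/backward-uniqueness machinery, not a reduction to \cite{chemplan}. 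You should also note the preliminary reduction to $p=q$ (and $p=3\cdot 2^k-2$ large), which the paper uses to streamline both the profile decomposition and the iteration.
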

\noindent
The rest of this article is structured as follows.  In Section \ref{prooftheorem},  we outline the proof of Theorem~\ref{mainthm}, leaving the proofs of the main supporting results to the subsequent sections.   The strategy of the proof,  following~\cite{kk} and~\cite{GKP} (based on the strategy of~\cite{KMNLS}-\cite{KMNLS3}), is by contradiction: assuming the conclusion of Theorem~\ref{mainthm} fails, we construct
 a ``critical element", namely a solution blowing up in finite time with minimal~$L^\infty(0,T^*; \dot B^{s_p}_{p,p})$
norm (in view of (\ref{tstarindep}) and (\ref{embed}), it is no loss of generality to set $q:=p$, which reduces some technical difficulties).
 The key tool in doing so is the ``profile decomposition" result (Theorem~\ref{thm:profa}, proved in
 Section~\ref{profiledecompo}) for solutions to (NS) associated with bounded data in~$\dot B^{s_p}_{p,p}$.  In turn we prove, again using  Theorem~\ref{thm:profa}, that such a critical element must vanish, in~${\mathcal S}'$, at blow up time, and we reach a contradiction via a backwards uniqueness argument.
\\\\
The difficulty compared with the previous references is the very low (negative) regularity of the space~$\bpp$ (for very large $p$) in which we assume control of the solution at blow-up time; in order to implement the above strategy we therefore rely on some improved regularity bounds on strong Navier-Stokes
solutions  using several iteration procedures: these are to be found
in Sections~\ref{iterationsection} and \ref{invertibilitysection}, and
the provided improved bounds are valid for any bounded local in time solution. As such,
they may prove to be useful in other contexts and are of independent
interest. Finally in Appendix~\ref{perturbationtheoryNS}
 a  perturbation result for~(NS) is stated in an appropriate functional setting which provides the key estimate in Theorem \ref{thm:profa}, and in Appendix \ref{paraproductsapp} we collect the standard Besov space estimates used throughout.
\\\\
{\bf Acknowledgments. } The authors extend their warm gratitude to Kuijie Li for
pointing out incorrect arguments in a previous version of the manuscript, and whose
remarks helped to improve the overall presentation.


\section{Proof of the main theorem \label{prooftheorem}}
\noindent
\subsection{Main steps of the proof}
\noindent The proof of Theorem~\ref{mainthm} follows the methods of~\cite{kk,GKP}. Before describing the main steps, let us start by noticing
that due to  the embedding (\ref{embed}) and the   fact recalled in
the introduction that, for $u_{0}$ belonging to ${\dot
  B}^{s_{p}}_{p,q}$,  $T^*(u_0)$ is independent of  $p$ and~$q$ for
any $p,q \in (3,\infty)$,  one can prove Theorem~\ref{mainthm} in the
case when~$p=q$, and one can also choose~$p$ as large as needed: in
the following we shall assume that $p=3\cdot 2^{k}-2$, for a given integer $k \geq 2$.\medskip
\noindent Let us define
$$
A_c := \sup \Big\{A>0\, \Big|\, \sup_{t\in
[0,T^*( u_0))}\|{\rm NS}(u_0)(t)\|_{\bpp} \leq A \Longrightarrow
T^*( u_0) = \infty \, \, \forall u_0\in \bpp \Big\} \, .
$$
Note that $A_c$ is well-defined by small-data results.  Moreover,
  if $A_c $ is finite, then
  $$
A_c = \inf \Big\{\sup_{t\in [0,T^*( u_0))}\|{\rm NS}( u_0)(t)\|_{\bpp} \, \Big| \,  u_0\in \bpp\, \, \textrm{with}\, \, T^*( u_0) < \infty \,  \Big\}\, .
$$
In the case that
$A_c<\infty$ (i.e. Theorem \ref{mainthm} is false),  we introduce the (possibly empty) set of initial data generating ``critical elements'' as follows:
$$
{\mathcal D}_c:= \Big\{
u_0 \in \bpp \,   \Big|   \,  T^*( u_0) < \infty \quad   \mbox{and} \,  \sup_{t\in [0,T^*(u_0))}\|{\rm NS}(u_0)(t)\|_{\bpp} = A_c< \infty
\Big\} \, .
$$
Theorem~\ref{mainthm} is an immediate corollary of the next three statements.
\begin{prop}[Existence of a critical element]\label{propexistcrit}
If~$A_c < \infty$, then the set~${\mathcal D}_c$ is non empty.
\end{prop}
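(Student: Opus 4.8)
The plan is to prove Proposition~\ref{propexistcrit} by the standard concentration-compactness / minimizing-sequence argument, using the profile decomposition (Theorem~\ref{thm:profa}) as the key tool. Assume $A_c<\infty$. Then by definition of $A_c$ as an infimum there exists a sequence of divergence-free data $u_{0,n}\in\bpp$ with $T^*(u_{0,n})<\infty$ and $\sup_{t\in[0,T^*(u_{0,n}))}\|{\rm NS}(u_{0,n})(t)\|_{\bpp}\to A_c$; in particular the sequence $(u_{0,n})$ is bounded in $\bpp$, say $\|u_{0,n}\|_{\bpp}\le A_c+1$ for $n$ large, and each corresponding solution $u_n:={\rm NS}(u_{0,n})$ blows up at a finite time $T^*_n:=T^*(u_{0,n})$ with a uniform-in-$n$ bound $\sup_{t<T^*_n}\|u_n(t)\|_{\bpp}\le A_c+o(1)$. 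After passing to a subsequence, I would apply Theorem~\ref{thm:profa} to the bounded sequence $(u_{0,n})$, obtaining a profile decomposition $u_{0,n}=\sum_{j=1}^{J}\Lambda_{j,n}\phi_j + w_n^J$ (with the appropriate scaling/translation cores and small remainder in the sense provided by that theorem), together with the asymptotic orthogonality of $\bpp$-norms, $\sum_j\|\phi_j\|_{\bpp}^? \lesssim \liminf_n\|u_{0,n}\|_{\bpp}^?$ (in whichever summable form Theorem~\ref{thm:profa} gives), and the nonlinear superposition statement that the Navier-Stokes evolutions of the profiles approximately reconstruct $u_n$ on time intervals where all profile solutions exist, via the perturbation result of Appendix~\ref{perturbationtheoryNS}.

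The heart of the argument is then a dichotomy on the profiles. If every profile $\phi_j$ had $\sup_{t<T^*(\phi_j)}\|{\rm NS}(\phi_j)(t)\|_{\bpp}<A_c$, then (using that the profiles are "subcritical", so by definition of $A_c$ each ${\rm NS}(\phi_j)$ is in fact global) the nonlinear profile decomposition plus the perturbation/stability theory would force ${\rm NS}(u_{0,n})$ to be global for $n$ large, contradicting $T^*_n<\infty$. Hence at least one profile, say $\phi_1$ after relabeling, satisfies $\sup_{t<T^*(\phi_1)}\|{\rm NS}(\phi_1)(t)\|_{\bpp}\ge A_c$; but by scaling invariance of the $\bpp$-norm and of the equation we may replace $\phi_1$ by a fixed representative, and the orthogonality bound $\|\phi_1\|_{\bpp}\le A_c$ together with the lower bound $\ge A_c$ pins down $\sup_{t<T^*(\phi_1)}\|{\rm NS}(\phi_1)(t)\|_{\bpp}=A_c$ and $T^*(\phi_1)<\infty$. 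Moreover the orthogonality then forces all other profiles and the remainder $w_n^J$ to vanish, so $\phi_1$ is (up to symmetries) the unique profile and $\phi_1\in\mathcal{D}_c$, which is therefore non-empty. I would also need to record that $T^*(\phi_1)<\infty$ rather than $=\infty$: this follows because if it were global and the sup-norm were $\le A_c$, definition of $A_c$ would again be violated, or directly from the fact that $u_n\to {\rm NS}(\phi_1)$ (after rescaling) in the relevant path space on compact time intervals, so a finite blow-up time must persist.

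The step I expect to be the main obstacle is making the nonlinear superposition rigorous at the very low regularity of $\bpp$ for large $p$: the perturbation theorem of Appendix~\ref{perturbationtheoryNS} is stated in an adapted functional framework, and to apply it one needs the profile evolutions and the error terms to be controlled in the $\mathscr{L}^{1:\infty}_{p,p}$-type norms (or the iterated-regularity norms of Sections~\ref{iterationsection}–\ref{invertibilitysection}), which is exactly where the negative-regularity difficulty mentioned in the introduction bites. In particular one must be careful that "$\sup_t\|\cdot\|_{\bpp}<A_c$ on the maximal interval $\Rightarrow$ global and bounded in the adapted norm" — i.e. that the $L^\infty_t\bpp$ control upgrades to control of the path-space norm needed to run the perturbation argument — and that the cross terms $B(\Lambda_{j,n}{\rm NS}(\phi_j),\Lambda_{j',n}{\rm NS}(\phi_{j'}))$ with $j\ne j'$ are negligible by orthogonality of the cores; these are precisely the estimates that Theorem~\ref{thm:profa} is designed to supply, so the proof of the proposition itself is a relatively short deduction from Theorem~\ref{thm:profa} once that theorem and the perturbation result are in hand.
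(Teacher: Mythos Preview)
Your overall strategy is correct and matches the paper's: take a minimizing sequence, apply the nonlinear profile decomposition (Theorem~\ref{thm:profa}), observe that at least one profile must have finite blow-up time (else $\tau_n\equiv\infty$ and Theorem~\ref{thm:profa} would force $T^*(u_{0,n})=\infty$), and conclude that this profile lies in $\mathcal{D}_c$. The perturbation issues you flag in your last paragraph are indeed the hard part, but they are all packaged inside Theorem~\ref{thm:profa}; once that theorem is granted, the proof of Proposition~\ref{propexistcrit} is a short deduction and does not revisit the iteration machinery.

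There is, however, a genuine gap in your argument for the \emph{upper} bound $\sup_{s<T^*_1}\|U_1(s)\|_{\bpp}\le A_c$. You try to extract this from the data orthogonality $\|\phi_1\|_{\bpp}\le A_c$, but that inequality concerns only $s=0$ and says nothing about $\|U_1(s)\|_{\bpp}$ for $s>0$. What is needed is orthogonality of the \emph{evolved} profiles at each time, which is the content of Proposition~\ref{claim33}: for $t_n=\lambda_{1,n}^2 s$ one has
\[
\|u_n(t_n)\|_{\bpp}^p = \|U_1(s)\|_{\bpp}^p + \|u_n(t_n)-(\Lambda_{1,n}U_1)(t_n)\|_{\bpp}^p + \varepsilon(n,s),
\]
so $A_n\ge \|U_1(s)\|_{\bpp}+o(1)$ for every fixed $s<T^*_1$, whence $A_c\ge\sup_s\|U_1(s)\|_{\bpp}$. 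Combined with the lower bound $\ge A_c$ (from $T^*_1<\infty$ and the definition of $A_c$), this yields equality. Without Proposition~\ref{claim33} your argument does not close. Note also that the paper does \emph{not} claim, and does not need, that the other profiles vanish; it only concludes $\phi_1\in\mathcal{D}_c$, so that part of your proposal is superfluous.
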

\begin{prop}[Compactness at blow-up time of   critical elements]\label{compactcrit}
If~$A_c < \infty$, then any~$u_0$ in~${\mathcal D}_c$ satisfies
$$
{\rm {\rm NS}}(u_0)(t)
\to 0\quad \mbox{in} \quad \mathcal{S}' \,, \quad  \textrm{as} \quad  t \nearrow T^*(u_0) \, .
$$
\end{prop}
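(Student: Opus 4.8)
The plan is to argue by contradiction and extract a profile decomposition from the solution sampled along a sequence of times approaching the blow-up time, exploiting the minimality of the critical element to force all but one profile to vanish, and then show that the surviving profile cannot itself generate a finite-time singularity unless the weak limit is zero. Concretely, suppose $u = NS(u_0)$ with $u_0 \in \mathcal{D}_c$ does \emph{not} tend to $0$ in $\mathcal{S}'$ as $t \nearrow T^* := T^*(u_0)$. Then there exist a sequence $t_n \nearrow T^*$ and a nonzero $v_0 \in \mathcal{S}'$ such that $u(t_n) \rightharpoonup v_0$. The family $\{u(t_n)\}_n$ is bounded in $\bpp$ by $A_c$, so by the profile decomposition of Theorem \ref{thm:profa} (applied to the bounded sequence of data $u(t_n)$) we may write, after extracting a subsequence,
\begin{equation*}
u(t_n) = \sum_{j=1}^J \Lambda_{\lambda_{j,n}, x_{j,n}} V_{0,j} + w_n^J \, ,
\end{equation*}
with the usual orthogonality of the scales/cores $(\lambda_{j,n}, x_{j,n})$, the smallness of $\etl w_n^J$ in the relevant dispersive norm as $J \to \infty$ uniformly in $n$, and the almost-orthogonality of $\bpp$ norms. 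Since the weak limit $v_0$ is nonzero, at least one profile — say the first, with scale $\lambda_{1,n}$ — survives, and one of the standard cases (scales converging, say normalized to $1$, with $x_{1,n}$ bounded) must occur; the key point is that the first profile $V_{0,1}$ is then (up to symmetries) the weak limit $v_0 \ne 0$.

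The heart of the argument is then the minimality/rigidity step. One runs the Navier-Stokes flow on each profile: because $\sum_j \|V_{0,j}\|_{\bpp}^p \lesssim A_c^p + o(1)$ (almost-orthogonality) and $\|V_{0,1}\|_{\bpp}$ contributes a nonzero amount, every \emph{other} profile has $\bpp$-norm strictly below $A_c$, hence by the definition of $A_c$ generates a \emph{global} solution lying in $\mathscr{L}^{1:\infty}_{p,p}(\infty)$ with finite norm. For the first profile $V_{0,1}$, one distinguishes two cases. If $\|V_{0,1}\|_{\bpp} < A_c$ as well, then $NS(V_{0,1})$ is also global; in that case the perturbation/stability result behind Theorem \ref{thm:profa} (Appendix \ref{perturbationtheoryNS}), together with the orthogonality of the profiles and the smallness of the remainder, glues the profiles back together to show that $u(t_n + \tau)$ remains bounded in $\mathscr{L}^{1:\infty}_{p,p}$ on a time interval of fixed positive length, uniformly in $n$ — contradicting that $t_n \nearrow T^* < \infty$ is the blow-up time. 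Hence we must have $\|V_{0,1}\|_{\bpp} = A_c$ \emph{and} $NS(V_{0,1})$ blows up in finite time, i.e. $V_{0,1} \in \mathcal{D}_c$; moreover the almost-orthogonality then forces all the other profiles and the remainder to vanish, so in fact $u(t_n) \to V_{0,1}$ strongly in $\bpp$ (this is the compactness of the flow of the critical element along $t_n$).

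It remains to upgrade ``$u(t_n) \to v_0$ strongly in $\bpp$ for one sequence $t_n \nearrow T^*$'' to ``$u(t) \to 0$ in $\mathcal{S}'$''. This is where the contradiction is sealed: strong convergence in $\bpp$ of $u(t_n)$ to a critical datum $v_0$, combined with local well-posedness around $v_0$ and uniqueness, implies that the solutions $NS(u(t_n))(\tau) = u(t_n + \tau)$ converge to $NS(v_0)(\tau)$ locally uniformly in $\tau$ on a fixed interval $[0,\delta]$; but $\sup_n t_n = T^*$, so this would extend $u$ past $T^*$ — unless $v_0$ itself blows up instantly, which for a bona fide $\bpp$ datum it cannot by the local existence theory. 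The only escape is $v_0 = 0$, contradicting our assumption that the weak limit was nonzero. I expect the main obstacle to be the rigorous justification, for very large $p$ (hence very negative regularity $s_p$), that the profile decomposition and the nonlinear perturbation theory can be carried out at the level of regularity of $\bpp$ — this is precisely what forces the iterative smoothing of Sections \ref{iterationsection}--\ref{invertibilitysection} to be invoked, so that one can sample $u(t_n)$ at times slightly before $T^*$ where the solution enjoys better-than-critical bounds, apply the $L^3$-type machinery there, and then transfer the conclusion back to $\bpp$. A secondary technical point is handling the various scaling cases in the profile decomposition ($\lambda_{1,n} \to 0$, $\to \infty$, or bounded, and $|x_{1,n}| \to \infty$ or bounded); the genuinely singular behaviour must be ruled out in each, using that a self-similar or concentrating profile would contradict either the boundedness of the critical norm or the backwards-uniqueness input used elsewhere in the paper.
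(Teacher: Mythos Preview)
Your proposal contains a genuine gap: you repeatedly use the implication ``$\|V_{0,j}\|_{\bpp} < A_c \Rightarrow T^*(V_{0,j}) = \infty$'', but this is \emph{not} what $A_c$ measures. By definition $A_c$ is a threshold on the \emph{a priori} bound $\sup_{t<T^*}\|NS(\phi)(t)\|_{\bpp}$, not on the size of the initial datum $\|\phi\|_{\bpp}$. A profile with tiny initial $\bpp$-norm may perfectly well blow up; nothing in the definition of $A_c$ prevents this. Thus your steps 5 and 6 (all other profiles are global; if the weak-limit profile also had subcritical norm the whole solution would be global) are unjustified, and the dichotomy leading to ``$V_{0,1}\in\mathcal D_c$ and all other profiles vanish'' does not follow.

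The paper's proof avoids this by ordering the profiles differently and by using Proposition~\ref{claim33}, which gives orthogonality of the \emph{evolved} profiles in $\bpp$ at each fixed time. One first reorders so that the profile with the \emph{fastest} (rescaled) blow-up is indexed $j=1$; since $\lambda_{1,n}^2 T_1^* \le T^*(u_c)-s_n\to 0$ and $T_1^*<\infty$, one gets $\lambda_{1,n}\to 0$, hence the weak-limit profile $\phi_{j_0}$ (scale $\equiv 1$) is \emph{not} the first profile. Proposition~\ref{claim33} then yields, for each $s<T_1^*$, that $A_c^p \ge \|U_1(s)\|_\bpp^p + \|\text{rest}\|_\bpp^p + o(1)$; since $A_n\equiv A_c$ one concludes $\sup_s\|U_1(s)\|_\bpp = A_c$, and choosing $s$ so that $\|U_1(s)\|_\bpp^p$ is within $\varepsilon$ of $A_c^p$ forces the entire remainder (in particular the $j_0$-th profile at time $t_n\to 0$) to be $\lesssim \varepsilon$ in $\dot B^{s_q}_{q,q}$, whence $\phi_{j_0}=0$. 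Note also that the iteration machinery of Sections~\ref{iterationsection}--\ref{invertibilitysection} plays no role here; it is used only in the rigidity step (Proposition~\ref{globalcrit}), so your expectation about the main obstacle is misplaced.
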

\begin{prop}[Rigidity of critical elements]\label{globalcrit}
If~$u_0$ belongs to~$\bpp$ with
$$
   \sup_{t\in [0,T^*(u_0))}\|{\rm NS}(u_0)(t)\|_{\bpp}  < \infty
$$ and if~$
{\rm NS}(u_0)(t)
\to 0$  in~$ \mathcal{S}' $ as~$ t \nearrow T^*(u_0)$, then~$T^*(u_0) = \infty$.
\end{prop}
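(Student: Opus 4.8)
The plan is to argue by contradiction: suppose $T^*:=T^*(u_0)<\infty$, and run an Escauriaza--Seregin--\v{S}ver\'ak type backward uniqueness argument in the spirit of~\cite{ess} and~\cite{GKP}, the one genuinely new step being to convert the very weak a priori bound in~$\bpp$ (recall that we work with~$p$ large, so this is a space of strongly negative regularity) into honest subcritical control, uniform up to~$T^*$. Write~$u:={\rm NS}(u_0)$, so that~$M:=\sup_{0\le t<T^*}\|u(t)\|_{\bpp}<\infty$ and, by hypothesis,~$u(t)\to 0$ in~$\mathcal{S}'(\R^3)$ as~$t\nearrow T^*$; by~(\ref{timecontinuous}) one has~$u\in\mathcal{C}^\infty(\R^3\times(0,T^*))$. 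First I would feed~$M$ into the iteration and invertibility estimates of Sections~\ref{iterationsection}--\ref{invertibilitysection} --- which are designed precisely to accommodate the negative regularity of~$\bpp$ and which produce bounds that are uniform up to the maximal time --- so as to upgrade the control of~$u$ near~$T^*$ to~$u\in L^\infty\big((T^*/2,T^*);L^3(\R^3)\big)$, together with the local bounds on~$\nabla u$ and~$\partial_t u$ furnished by parabolic smoothing. Being smooth and of critical Lebesgue integrability on~$\R^3\times(T^*/2,T^*)$, $u$ then falls within the scope of the~$\varepsilon$-regularity theory of~\cite{ess}.

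Next I would show that the singular behaviour of~$u$ at~$T^*$ is confined to a fixed ball, i.e.\ there exist~$R_0>0$ and~$\eta>0$ such that~$u$ extends smoothly to~$(\R^3\setminus B_{R_0})\times(T^*-\eta,T^*]$, with~$u$ and~$\nabla u$ bounded there; consequently~$u(\cdot,t)\to u(\cdot,T^*)$ locally uniformly on~$\R^3\setminus B_{R_0}$, and testing the~$\mathcal{S}'$-convergence~$u(t)\to 0$ against test functions supported in~$\R^3\setminus B_{R_0}$ forces~$u(\cdot,T^*)\equiv 0$ on~$\R^3\setminus B_{R_0}$, hence also~$\omega(\cdot,T^*)\equiv 0$ there, where~$\omega:=\nabla\times u$. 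The mechanism for the smoothness is the usual one: smallness of the relevant scale-invariant quantity on far-out backward parabolic cylinders yields smoothness and bounds there by~$\varepsilon$-regularity. The delicate point --- and a good part of what makes the present statement harder than its $L^3$ and~$\dot H^{1/2}$ predecessors --- is establishing the required decay at spatial infinity near~$T^*$: the uniform~$L^3$ bound alone does not rule out a bubble that concentrates while escaping to spatial infinity, and excluding this is where one must combine the subcritical bounds of the previous step with the~$\mathcal{S}'$-vanishing and a further concentration-compactness (profile decomposition) argument.

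With this in hand the conclusion follows from the~\cite{ess} machinery. The vorticity solves~$\partial_t\omega-\D\omega=\omega\cdot\nabla u-u\cdot\nabla\omega$, so on~$(\R^3\setminus B_{R_0})\times(T^*-\eta,T^*)$ it obeys~$|\partial_t\omega-\D\omega|\le C\big(|\omega|+|\nabla\omega|\big)$, with~$C$ controlled by the bounds on~$\|u\|_{L^\infty}$ and~$\|\nabla u\|_{L^\infty}$ there, vanishes at~$t=T^*$, and (being bounded) satisfies the required growth condition; the backward uniqueness theorem of~\cite{ess} then gives~$\omega\equiv 0$ on~$(\R^3\setminus B_{R_1})\times(T^*-\eta,T^*)$ for some~$R_1\ge R_0$. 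On a compact time slab~$I\subset(T^*-\eta,T^*)$ (and hence~$I\subset(0,T^*)$) the smooth solution~$u$ is, by parabolic smoothing and continuity in~$t$ on~$(0,T^*)$, bounded together with~$\nabla u$ on all of~$\R^3\times I$, so the differential inequality for~$\omega$ holds globally in space over~$I$; since~$\omega$ vanishes on the open set~$(\R^3\setminus B_{R_1})\times\mathrm{int}(I)$, the unique continuation theorem of~\cite{ess} yields~$\omega\equiv 0$ on~$\R^3\times\mathrm{int}(I)$. Picking~$t_*\in\mathrm{int}(I)$, we get~$-\D u(\cdot,t_*)=\nabla\times\omega(\cdot,t_*)=0$ with~$u(\cdot,t_*)\in L^3(\R^3)$ divergence free, whence~$u(\cdot,t_*)\equiv 0$; by uniqueness for~(\ref{NSduhamel}) the solution vanishes identically on~$[t_*,T^*)$ and extends by zero to a global solution, so that~$T^*(u_0)=\infty$, contradicting~$T^*<\infty$.

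The step I expect to be the main obstacle is the first one: turning an a priori bound in a space of strongly negative regularity into subcritical control uniform up to~$T^*$ is in no way a consequence of the local Cauchy theory, and it is precisely what the iteration procedures of Sections~\ref{iterationsection} and~\ref{invertibilitysection} --- together with the approximate-regularity input of~\cite{chemplan} --- are built to deliver. Once that is available, the remaining steps are the~\cite{ess} scheme as already carried out in the~$L^3$ framework in~\cite{GKP}, the decay at spatial infinity near~$T^*$ being the most delicate of the remaining (more classical) ingredients.
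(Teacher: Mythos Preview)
Your overall strategy --- upgrade the a priori control via the iteration of Sections~\ref{iterationsection}--\ref{invertibilitysection}, then run $\varepsilon$-regularity, backward uniqueness, and unique continuation as in~\cite{ess,GKP,kk} --- matches the paper. But you misidentify what the iteration actually delivers, and this propagates into an unnecessary complication at the spatial-decay step.

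Concretely: the iteration does \emph{not} produce $u\in L^\infty\big((T^*/2,T^*);L^3(\R^3)\big)$. If it did, the result would follow immediately from the known $L^3$ criterion of~\cite{ess}, and the whole paper would be redundant. What Proposition~\ref{lemmaimprovedb} gives instead is a \emph{decomposition} $u=v+w$ on $(T^*-\varepsilon,T^*)$ with $v\in L^\infty_t L^p_x$ (hence $v\in L^p_{t,x}$ on the finite slab, a genuinely subcritical space since $p>3$) and $w\in L^3_{t,x}$. Neither piece lies in $L^\infty_t L^3_x$; the point is that each lies in a global Lebesgue space on the slab.

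This is also why your worry about ``a bubble that escapes to spatial infinity'' is misplaced, and why no further profile decomposition is needed here. Since $v\in L^p(\R^3\times(T^*-\varepsilon,T^*))$ and $w\in L^3(\R^3\times(T^*-\varepsilon,T^*))$, absolute continuity of the integral gives $\|u\|_{L^3(Q_{\varepsilon,R}(x))}\lesssim \|v\|_{L^p(Q_{\varepsilon,R}(x))}+\|w\|_{L^3(Q_{\varepsilon,R}(x))}\to 0$ as $|x|\to\infty$, and an identical argument (after writing the pressure via Riesz transforms of products of $v$ and $w$) handles $\pi$ in $L^{3/2}_{\mathrm{loc}}$. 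This delivers exactly the smallness required for $\varepsilon$-regularity on far-out parabolic cylinders, with no appeal to the $\mathcal{S}'$-vanishing or to concentration-compactness at this stage. From there your Steps~2--3 (smoothness outside a compact set, $u(\cdot,T^*)\equiv 0$ there, backward uniqueness and unique continuation on $\omega$) are exactly as in the paper.
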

\noindent
The proofs of Propositions \ref{propexistcrit} and \ref{compactcrit}
depend primarily on the ``profile decomposition" and related
``orthogonality" results presented in Section \ref{profdecintro} below (and proved later in Section \ref{profiledecompo}).  The proof of Proposition \ref{globalcrit} using backwards uniqueness, unique continuation and ``$\e$-regularity" results relies crucially on the ``improved bounds via iteration" results presented and proved in   Sections \ref{iterationsection} and \ref{invertibilitysection}.
\\\\
In the remainder of Section \ref{prooftheorem} we outline the proofs of Proposition \ref{propexistcrit}, Proposition \ref{compactcrit} and Proposition \ref{globalcrit}, postponing the proofs of the more technical points to the subsequent sections.

\subsection{Profile decompositions}\label{profdecintro}
\quad In~\cite{GKP} a profile decomposition of solutions to the Navier-Stokes equations associated with data in~$\dot B^{s_p}_{p,p}(\R^d)$ is proved for~$d< p < 2d+3$,
 thus extending the result of~\cite{Gallagher} which only deals with the case~$p=2$. In this section we extend (with $d=3$ for simplicity) that decomposition  to the full range of $p \in (3,\infty)$: the main new ingredient   is the decomposition (proved in Section \ref{elemdecompsec}) of any solution to the Navier-Stokes equations into two parts, the first of which  involves only the heat extension of the initial data, and the second of which is  smooth (prior to blow-up time). We refer to Lemma~\ref{decompositionNS} for a precise statement.
\\\\
Before stating the main result of this section, let us recall the following definition.
        \begin{definition}\label{deforthseq}
   We say that two sequences
 $(\lambda _{j,n} ,x_{j,n})_{n \in \N} \in ((0,\infty) \times \R^d)^\N$ for~$j \in \{1,2\}$ are orthogonal, and we write~$(\lambda _{1,n} ,x_{1,n})_{n \in \N}  \perp (\lambda _{2,n} ,x_{2,n})_{n \in \N}, $ if
 \begin{equation}\label{orthseq}
\lim_{n \to
+\infty} \frac{\lambda _{1,n} }{\lambda _{2,n} } + \frac{\lambda
_{2,n} }{\lambda _{1,n} } + \frac{|x_{1,n} -
x_{2,n}|}{\lambda _{1,n} }   = +\infty \, .
\end{equation}
Similarly we say that a set of sequences $(\lambda _{j,n} ,x_{j,n}) _{n \in \N}$,  for $j\in \N$, $ j \geq 1$,
is (pairwise)  orthogonal if for all~$ j \neq j' $, $(\lambda _{j,n} ,x_{j,n})_{n \in \N}  \perp (\lambda _{j',n} ,x_{j',n})_{n \in \N}.$
\end{definition}
\noindent Next let us define, for any set of sequences $(\lambda _{j,n} ,x_{j,n}) _{n \in \N}$ {(for $j\geq 1$)},  the scaling operator
\begin{equation}\label{defLambdatilde}
  \Lambda_{j,n} U_j (x,t) := \frac{1}{\lambda _{j,n} }
U_j \Big(  \frac{x - x_{j,n}}{\lambda _{j,n} } , \frac{t}{\lambda
_{j,n} ^2}   \Big) \, \cdotp
\end{equation}
\noindent It is proved in~\cite{gk} (based on the technique of \cite{Jaffard}) that any bounded (time-independent) sequence in~$\dot
B^{s_{p}}_{p,p}(\R^d)$ may be decomposed into a sum  of rescaled functions~$ \Lambda_{j,n} \phi_j $, where the set  of sequences~ \linebreak
$(\lambda _{j,n} ,x_{j,n}) _{n \in \N}$ is orthogonal, up to a small remainder term in~$\dot B^{s_{q}}_{q,q}$, for any~$q >p$. The precise statement is as follows, and is in the spirit of the pioneering work~\cite{PG}.
\begin{thm}[\cite{gk}]\label{thm:dataprofb}
Fix $ p, q \in [1,\infty]$ such that $p<q$. Let~$(f_n)_{n \geq 1}$ be a bounded sequence in~${\dot
B^{s_{p}}_{p,p}}(\R^d)$, and let~$\phi_{1}$ be any weak limit
point of~$(f_n)$. Then, after possibly replacing~$( f_n)_n$ by a subsequence which we relabel~$(f_n)_n$, there exists a sequence of profiles~$(\phi_j)_{j  \geq 2} $ in~${\dot B^{s_{p}}_{p,p}}(\R^d)$, and a set of sequences~$(\lambda _{j,n} ,x_{j,n}) _{n  \geq 1}$ for $j\in \N$ with $(\lambda_{1,n},x_{1,n}) \equiv (1,0)$ which are orthogonal  in
the sense of Definition \ref{deforthseq}  such that, for all~$n,J\in \mathbb{N}$,
if we define $\psi_n^J$ by
\begin{equation}\label{profilesa}
f_n =
\sum_{j=1}^J \Lambda_{j,n} \phi_j  + \psi_n ^J \, ,
\end{equation}
the following properties hold:
\begin{itemize}
\item the function $\psi_n^J$ is a remainder in the sense that
\begin{equation}\label{orth2a}
\lim_{J\to\infty}
\Big(\limsup_{n\to \infty} \|\psi_n^J\|_{\dot B^{s_{q}}_{q,q}(\R^d)}
\Big) = 0\,;
\end{equation}
\item there is a norm $\| \cdot \tilde \|_{\dot B^{s_{p}}_{p,p}(\R^d)}$
which is equivalent to $\|\cdot \|_{\dot B^{s_{p}}_{p,p}(\R^d)}$ such that
\begin{equation}\label{orth2ab}
\Big\| \Big(\| \phi_j {\tilde
\|}_{\dot B^{s_{p}}_{p,p}(\R^d)} \Big)_{j=1}^\infty \Big\|_{\ell^p}
\leq \liminf_{n\to\infty} \|f_{n}{\tilde \|}_{\dot
B^{s_{p}}_{p,p}(\R^d)}
\end{equation}
and, for each integer~$J $,
\begin{equation}\label{orth2ac}
\| \psi_n^J {\tilde \|}_{\dot
B^{s_{p}}_{p,p}(\R^d)} \leq \|f_{n}{\tilde \|}_{\dot B^{s_{p}}_{p,p}(\R^d)}
+ \circ(1) \quad \textrm{as} \quad n\to\infty\,;
\end{equation}
\end{itemize}
\end{thm}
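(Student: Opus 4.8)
\emph{Overall strategy.} I would prove Theorem~\ref{thm:dataprofb} by the classical iterative profile-extraction scheme, in the homogeneous Besov formulation of~\cite{Jaffard,PG,gk}. The engine is a \textbf{one-profile extraction lemma}: there is a function $\delta(\eta,M)>0$, nondecreasing in $\eta$, such that whenever $(g_n)_{n\geq1}$ is bounded by $M$ in $\bpp$ and $\limsup_n\|g_n\|_{\bqq}\geq\eta>0$, then, after extraction, there exist sequences $(\lambda_n,x_n)\in(0,\infty)\times\R^d$ and a \emph{nonzero} $\phi\in\bpp$ with $\lambda_n g_n(\lambda_n\,\cdot\,+x_n)\rightharpoonup\phi$ in $\mathcal S'(\R^d)$ and $\|\phi\|_{\bpp}\geq\delta(\eta,M)$. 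To prove the lemma I would begin from the elementary interpolation inequality
$$
\|g\|_{\bqq}^{\,q}\ \leq\ \|g\|_{\bpp}^{\,p}\,\|g\|_{\dot B^{-1}_{\infty,\infty}}^{\,q-p}\,,
$$
which holds dyadic block by dyadic block via $\|\Delta_j g\|_{L^q}\leq\|\Delta_j g\|_{L^p}^{p/q}\|\Delta_j g\|_{L^\infty}^{1-p/q}$ together with the bookkeeping identity $s_q\,q=s_p\,p-(q-p)$. Hence $\limsup_n\|g_n\|_{\dot B^{-1}_{\infty,\infty}}\geq(\eta\,M^{-p/q})^{q/(q-p)}=:\eta'$, so along a subsequence one picks $j_n\in\Z$ and $x_n\in\R^d$ with $2^{-j_n}|\Delta_{j_n}g_n(x_n)|\gtrsim\eta'$. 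Setting $\lambda_n:=2^{-j_n}$ and $h_n(x):=\lambda_n g_n(\lambda_n x+x_n)$, the identity $\Delta_0 h_n(x)=\lambda_n(\Delta_{j_n}g_n)(\lambda_n x+x_n)$ (valid because $\Delta_j$ is the dyadic dilate of $\Delta_0$) gives $|\Delta_0 h_n(0)|\gtrsim\eta'$; since $\|h_n\|_{\bpp}=\|g_n\|_{\bpp}\leq M$, a subsequence of $(h_n)$ converges in $\mathcal S'$ to some $\phi$, and as the $\Delta_0 h_n$ are localized to a fixed frequency annulus and uniformly bounded in $L^p$ (hence uniformly $C^1$), $\Delta_0 h_n(0)\to\Delta_0\phi(0)$, so $|\Delta_0\phi(0)|\gtrsim\eta'$; Bernstein on the fixed annulus then yields $\|\phi\|_{\bpp}\geq\|\Delta_0\phi\|_{L^p}\gtrsim\|\Delta_0\phi\|_{L^\infty}\geq|\Delta_0\phi(0)|\gtrsim\eta'$, the desired bound.

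\emph{The iteration and orthogonality of the scales.} Pick a weak limit point $\phi_1\in\bpp$ of $(f_n)$, extract so that $f_n\rightharpoonup\phi_1$, and set $(\lambda_{1,n},x_{1,n}):=(1,0)$, $\psi_n^1:=f_n-\phi_1$, so $\Lambda_{1,n}^{-1}\psi_n^1=\psi_n^1\rightharpoonup0$. Inductively, given $\phi_1,\dots,\phi_J$ and $\psi_n^J:=f_n-\sum_{j=1}^J\Lambda_{j,n}\phi_j$, set $\eta_J:=\limsup_n\|\psi_n^J\|_{\bqq}$: if $\eta_J=0$ we stop (all later profiles zero); otherwise the extraction lemma applied to $(\psi_n^J)$ yields $(\lambda_{J+1,n},x_{J+1,n})$ and $\phi_{J+1}\neq0$ with $\Lambda_{J+1,n}^{-1}\psi_n^J\rightharpoonup\phi_{J+1}$ and $\|\phi_{J+1}\|_{\bpp}\geq\delta(\eta_J,M_1)$; put $\psi_n^{J+1}:=\psi_n^J-\Lambda_{J+1,n}\phi_{J+1}$, which satisfies $\Lambda_{J+1,n}^{-1}\psi_n^{J+1}\rightharpoonup0$, and make all choices simultaneous in $J$ by a diagonal extraction. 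That the scale sequences are pairwise orthogonal in the sense of (\ref{orthseq}) is then automatic: if not, pick a failing pair $(\lambda_{j,n},x_{j,n})$, $(\lambda_{j',n},x_{j',n})$ with $j<j'$, choosing $j$ \emph{maximal} among the indices below $j'$ failing orthogonality with $j'$. Along a subsequence $\lambda_{j,n}/\lambda_{j',n}\to\mu\in(0,\infty)$ and $(x_{j,n}-x_{j',n})/\lambda_{j',n}\to y$, so $\Lambda_{j',n}^{-1}\Lambda_{j,n}$ converges to a fixed dilation--translation $\Lambda_{\mu,y}$ strongly enough to pass to the limit in products with bounded weakly-null sequences. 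Writing $\psi_n^{j'-1}=\psi_n^{j}-\sum_{j<k<j'}\Lambda_{k,n}\phi_k$ and applying $\Lambda_{j',n}^{-1}$, the intermediate terms satisfy $\Lambda_{j',n}^{-1}\Lambda_{k,n}\phi_k\rightharpoonup0$ (by maximality of $j$, each $(\lambda_{k,n},x_{k,n})$, $j<k<j'$, \emph{is} orthogonal to $(\lambda_{j',n},x_{j',n})$, and orthogonal scales carry a fixed profile weakly to $0$), whence $\Lambda_{j',n}^{-1}\psi_n^{j}\rightharpoonup\phi_{j'}$; but $\Lambda_{j',n}^{-1}\psi_n^{j}=(\Lambda_{j',n}^{-1}\Lambda_{j,n})(\Lambda_{j,n}^{-1}\psi_n^{j})\rightharpoonup\Lambda_{\mu,y}(0)=0$ because $\Lambda_{j,n}^{-1}\psi_n^{j}\rightharpoonup0$. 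Hence $\phi_{j'}=0$, a contradiction.

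\emph{The Pythagorean expansion and the vanishing remainder.} The substance is the almost-orthogonality of the $\bpp$ norms, (\ref{orth2ab})--(\ref{orth2ac}). Following Jaffard, fix a sufficiently regular wavelet system so that the $\ell^p$-norm of the suitably normalized wavelet coefficients, call it $\|\cdot\tn_{\bpp}$, is equivalent to $\|\cdot\|_{\bpp}$. Each $\Lambda_{j,n}$ acts on the wavelet index set by a (generally non-dyadic) dilation and translation; using the almost-orthogonality of rescaled wavelet systems, one shows that for each fixed $J$ the wavelet coefficients of $\Lambda_{1,n}\phi_1,\dots,\Lambda_{J,n}\phi_J$ and those of $\psi_n^J$ live on asymptotically disjoint blocks of indices --- this is precisely where the orthogonality~(\ref{orthseq}) enters --- so that
$$
\| f_n\tn_{\bpp}^{\,p}\ =\ \sum_{j=1}^J\| \phi_j\tn_{\bpp}^{\,p}\ +\ \| \psi_n^J\tn_{\bpp}^{\,p}\ +\ o(1)\,,\qquad n\to\infty\,.
$$
Discarding the last term and letting $J\to\infty$ gives (\ref{orth2ab}); discarding the sum gives (\ref{orth2ac}) and, in particular, the uniform bound $\sup_{J,n}\| \psi_n^J\tn_{\bpp}\leq M_1+o(1)$ with $M_1:=\sup_n\| f_n\tn_{\bpp}$, which legitimizes using a single constant $\delta(\,\cdot\,,M_1)$ throughout the iteration. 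Finally, (\ref{orth2a}): since $\sum_j\| \phi_j\tn_{\bpp}^{\,p}\leq M_1^p<\infty$, only finitely many profiles can have $\bpp$-norm above any fixed threshold, so if $\eta_J\geq\eta>0$ held for infinitely many $J$, each such step would yield a profile of norm $\geq\delta(\eta,M_1)>0$, a contradiction; hence $\eta_J\to0$, which is (\ref{orth2a}).

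\emph{Main obstacle.} The genuinely delicate point is the third step: away from the Hilbert case $p=2$ there is no orthogonality to lean on, so the exact $\ell^p$ Pythagorean splitting has to be extracted from a wavelet-adapted equivalent norm, which forces one to control the interaction of rescaled wavelet systems --- in particular the drift to infinity of the relevant ranges of indices --- uniformly in $n$. The rest (concentration extraction through $\dot B^{-1}_{\infty,\infty}$, the self-improving orthogonality of the scales, and the $\ell^p$-summability argument killing the remainder) is by now standard abstract profile-decomposition machinery, already present in~\cite{Gallagher,GKP} for narrower ranges of $p$.
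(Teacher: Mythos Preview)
The paper does not prove Theorem~\ref{thm:dataprofb}; it is quoted verbatim from~\cite{gk} and used as a black box. Your sketch reproduces the strategy of~\cite{gk} (extending~\cite{Jaffard,PG}): concentration detection via $\dot B^{-1}_{\infty,\infty}$, iterative extraction, orthogonality of scales by contradiction, and the $\ell^p$-Pythagorean expansion through an equivalent wavelet norm. In that sense your approach coincides with the original proof the paper is citing. The paper does add one refinement you may want to note: Remark~\ref{improvedpdorth} observes that the argument behind Proposition~\ref{claim33} in fact yields the sharp asymptotic orthogonality~(\ref{improvedortheq}) in the \emph{original} Besov norm, not only in the auxiliary wavelet norm, improving on~\cite{gk}.

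There is one genuine slip in your one-profile extraction lemma. You write
\[
\|\phi\|_{\bpp}\ \geq\ \|\Delta_0\phi\|_{L^p}\ \gtrsim\ \|\Delta_0\phi\|_{L^\infty}\ \geq\ |\Delta_0\phi(0)|\ \gtrsim\ \eta',
\]
invoking ``Bernstein on the fixed annulus'' for the middle inequality. But Bernstein goes the other way: for functions with Fourier support in $\{|\xi|\sim 1\}$ one has $\|\cdot\|_{L^\infty}\lesssim\|\cdot\|_{L^p}$, not the reverse (a sum of $N$ well-separated unit-scale bumps with spectrum in the unit annulus has $L^\infty$ norm $\sim 1$ and $L^p$ norm $\sim N^{1/p}$). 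What you actually need is the weaker \emph{pointwise} bound
\[
|\Delta_0\phi(0)|\ =\ \big|(\tilde\psi_0^\vee*\Delta_0\phi)(0)\big|\ \leq\ \|\tilde\psi_0^\vee\|_{L^{p'}}\,\|\Delta_0\phi\|_{L^p},
\]
where $\tilde\psi_0\in\mathcal S$ equals $1$ on the support of $\widehat{\Delta_0}$; this gives $\|\Delta_0\phi\|_{L^p}\gtrsim|\Delta_0\phi(0)|\gtrsim\eta'$ directly, bypassing the false norm comparison. With this correction the extraction lemma and the rest of your outline are sound.
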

\noindent
Notice that, in particular, for any~$j\geq 2$,  either
${\displaystyle \lim_{n\to\infty} |x_{j,n}| = +\infty}$ or
${\displaystyle \lim_{n\to \infty} \lambda _{j,n} \in \{0,+\infty\}}$ due to the orthogonality of the scales/cores with $(\lambda_{1,n},x_{1,n}) \equiv (1,0)$, and also that
\begin{equation}\label{orth3a} \Big\| \Big(\| \phi_j {
\|}_{\dot B^{s_{p}}_{p,p}(\R^d)} \Big)_{j=1}^\infty \Big\|_{\ell^p}
\lesssim \liminf_{n\to\infty} \|f_{n}{  \|}_{\dot
B^{s_{p}}_{p,p}(\R^d)}\, .
\end{equation}
\noindent
(See Remark \ref{improvedpdorth} below for an improvement of (\ref{orth3a}); in particular, one may take the constant equal to one.)  In Section \ref{profiledecompoa} (where, again, we set $d=3$ for simplicity), we shall prove the following result on the propagation of~(\ref{profilesa}) by the Navier-Stokes flow, which extends Theorem~3 of~\cite{GKP} to the full range of the $p$ index (where for very large values of $p$, we rely crucially on the iterations described in Section \ref{elemdecompsec}).

\pagebreak

\begin{thm}[{\rm NS} Evolution of Profile Decompositions]\label{thm:profa}
Fix~$p , q$ with $3< p   < q \leq \infty$. Let~$(u_{0,n})_{n\geq1}$ be a bounded sequence of
divergence-free vector fields in~${\dot B^{s_{p}}_{p,p}}$, and let~$\phi_{1}$ be any weak limit point of~$(u_{0,n})$.  Let $(u_{0,n})_{n\geq1}$ denote the subsequence given by applying Theorem~{\rm\ref{thm:dataprofb}} with $f_n:=u_{0,n}$, and let $(\Lambda_{j,n}\phi_j)_{j\geq 1}$ (with $\Lambda_{1,n} \equiv \mathrm{Id}$) and $\psi_n^J$ be the associated (divergence-free, due to~{\rm(\ref{orthseq})}) profiles and remainder.  Then setting~$T^*_j:=T^*(\phi_j)$ and denoting~$U_j:= NS(\phi_j) \in \mathscr{L}^{1:\infty}_p[T<T_j^*]$ and $u_n:= NS(u_{0,n})$, the following properties hold:
\begin{itemize}
\item there is a finite (possibly empty) subset~$I $ of
$\mathbb{N}  $ such that
$$
T_j^* < \infty \, \, \forall j \in I \, ,\qquad \mbox{and} \qquad  U_j \in
  {\mathscr L}^{1:\infty}_{p}({\infty}) \, \, \forall j
\in \mathbb{N}  \setminus I\, .
$$
For all~$j \in I$ fix any~$T_j< T_j^*$ and define~$\displaystyle \tau_n := \min_{j \in I}\lambda _{j,n} ^2
T_j $ if $I$ is nonempty and $\displaystyle \tau_n := \infty$ otherwise. Then we have  $$
\sup_n \|u_n\|_{  {\mathscr L}^{1:\infty}_{p} ({\tau_n})}<\infty \, ;
$$

\item  setting $w_n^J:=\etl \psi_n^J$,  there exists some~$J_0 \in \N$  and $N(J)\in \N$ for each $J>J_0$ such that $r_n^J$ given by
\begin{equation}\label{evolprof} u_n (x,t) = U_1(x,t) + \sum_{j =
2}^{J}  \Lambda_{j,n} U_j (x,t)+ w_n^J (x,t) + r_n^J
(x,t)
\end{equation}
is well-defined for $J>J_0$, $n>N(J)$, $t< \tau_n
$ and $x\in \rt$, and moreover $w_n^J$ and $r_n^J$ are small remainders in the sense that
\begin{equation}\label{remainders} \lim_{J\rightarrow \infty}
\Big(\limsup_{n \rightarrow \infty}
\|w_n^J\|_{ {\mathscr L}^{1:\infty}_{q}(\infty)}\Big) = \lim_{J\rightarrow \infty}
\Big(\limsup_{n \rightarrow \infty}
\|r_n^J\|_{ {\mathscr L}^{2:\infty}_{q}(\tau_n)}\Big) = 0 \, .
\end{equation}
\end{itemize}
 \end{thm}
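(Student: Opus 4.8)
The plan is to prove Theorem~\ref{thm:profa} by following the strategy of~\cite{GKP}, with the new elementary decomposition (Lemma~\ref{decompositionNS}) used to overcome the low-regularity obstruction for large $p$. First I would apply Theorem~\ref{thm:dataprofb} to the data sequence $(u_{0,n})$ to produce the profiles $\phi_j$, the orthogonal scales/cores $(\lambda_{j,n},x_{j,n})$, and the remainder $\psi_n^J$, fixing the norm equivalence and the $\ell^p$ almost-orthogonality bound~(\ref{orth2ab}). Since $(\phi_j)_{j\geq1}\in\ell^p(\bpp)$, only finitely many profiles can have norm exceeding the small-data threshold $c_0$; for all sufficiently large $j$, $NS(\phi_j)$ is global with small $\cl^{1:\infty}_p(\infty)$ norm, and one must also control the tail $\sum_{j\geq J_0}\Lambda_{j,n}U_j$ in a suitable scaling-invariant norm using the orthogonality of the $(\lambda_{j,n},x_{j,n})$ (so that cross terms vanish as $n\to\infty$) together with the $\ell^p$ summability. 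This identifies the finite ``bad set'' $I$ of indices $j$ with $T_j^*<\infty$, and gives the first bullet once one knows that the $U_j$ for $j\in I$, rescaled and summed, can serve as an approximate solution on $[0,\tau_n)$ with $\tau_n=\min_{j\in I}\lambda_{j,n}^2 T_j$.

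Next I would set up the approximate solution
$$
U_n^J(x,t):=U_1(x,t)+\sum_{j=2}^J\Lambda_{j,n}U_j(x,t)+w_n^J(x,t),\qquad w_n^J:=\etl\psi_n^J,
$$
and show it nearly solves (NS) on $[0,\tau_n)$: plugging into the Duhamel formulation, the error consists of (i) interaction terms $B(\Lambda_{j,n}U_j,\Lambda_{j',n}U_{j'})$ with $j\neq j'$, which are small by the orthogonality of scales and cores (a standard but somewhat delicate computation in the $\cl$-type spaces, using the heat and paraproduct estimates of Appendix~\ref{paraproductsapp}), (ii) interaction terms between the profile part and $w_n^J$, and (iii) the fact that $w_n^J$ is a free evolution and so contributes $B(w_n^J,w_n^J)$ plus cross terms. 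The key point is that $\|w_n^J\|_{\cl^{1:\infty}_q(\infty)}\to0$ in the $\limsup_n$ then $\lim_J$ sense, which follows from~(\ref{orth2a}) and the boundedness of the free heat semigroup from $\dot B^{s_q}_{q,q}$ into $\cl^{1:\infty}_q(\infty)$; the profile and interaction errors are made small in the same iterated-limit sense. Then the perturbation result of Appendix~\ref{perturbationtheoryNS} (stated in the functional setting matching the $\cl^{2:\infty}_q(\tau_n)$ norm of the remainder) upgrades ``$U_n^J$ is an approximate solution with small error'' to ``$u_n=NS(u_{0,n})$ exists on $[0,\tau_n)$, is uniformly bounded in $\cl^{1:\infty}_p(\tau_n)$, and $r_n^J:=u_n-U_n^J$ is small in $\cl^{2:\infty}_q(\tau_n)$,'' which is exactly~(\ref{evolprof})--(\ref{remainders}).

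The main obstacle, and where the genuinely new work lies, is making all of this function at the very low regularity $s_p=-1+3/p$ for large $p$: the spaces $\bpp$ are then so rough (negative regularity, close to $\dot B^{-1}_{\infty,\infty}$) that the direct bilinear/perturbation estimates used in~\cite{GKP} for $p<2d+3$ no longer close. To get around this I would invoke Lemma~\ref{decompositionNS}: each solution $U_j=NS(\phi_j)$ splits as $\etl\phi_j$ plus a smoother piece (smooth before blow-up time), and correspondingly $u_n$ splits into a ``heat part'' driven by the linear evolution of the data and a regularized nonlinear part. The heat part is handled by the linear profile decomposition directly, while the regularized part lives in better spaces where the perturbation theory applies; the iteration procedures of Sections~\ref{iterationsection} and~\ref{invertibilitysection} are what make this gain of regularity quantitative and uniform in $n$. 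The number $J_0$ appears precisely because one needs $J$ large enough that the tail of profiles is below the small-data threshold so that the approximate solution is well-defined up to $t=\tau_n$; $N(J)$ appears because, for fixed $J$, the interaction and remainder errors are only small for $n$ large. Assembling these pieces — orthogonality kills cross terms, $\ell^p$ summability controls the tail, Lemma~\ref{decompositionNS} plus the iterations supply the missing regularity, and Appendix~\ref{perturbationtheoryNS} closes the argument — yields the theorem.
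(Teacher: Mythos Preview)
Your overall strategy matches the paper's proof: decompose the data via Theorem~\ref{thm:dataprofb}, use $\ell^p$ summability of the profiles to isolate the finite bad set $I$, write the equation satisfied by the remainder $r_n^J$ as a perturbed Navier--Stokes system with a drift term $F_n^{J,1}$ and a source term $G_n^{J,1}$, and close via the perturbation result of Appendix~\ref{perturbationtheoryNS}. You also correctly identify Lemma~\ref{decompositionNS} as the new ingredient enabling the extension to all $p<\infty$.

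Two points need correction. First, the iteration procedures of Sections~\ref{iterationsection} and~\ref{invertibilitysection} play \emph{no role} in the proof of Theorem~\ref{thm:profa}; they are used only for the rigidity step (Proposition~\ref{lemmaimprovedb}). The sole iteration used here is the elementary one of Lemma~\ref{decompositionNS}. Second, your description of how Lemma~\ref{decompositionNS} is deployed is imprecise: the paper does not split $u_n$ itself, but rather splits each profile $U_j = H_{N,j} + Z_{N,j}$ with $H_{N,j}=\mathscr{B}_{N-1}(e^{t\Delta}\phi_j)$ a multilinear expression in the linear heat flow and $Z_{N,j}\in\cl^{q/N}_{q/N}$, for $N$ chosen with $3(N-1)\leq q$. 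This is used specifically to prove Lemma~\ref{lemmadrift}, i.e.\ that the drift $F_n^{J,1}$ is uniformly bounded in the \emph{sum} space $\cl^{a:\infty}_q(\infty)+\cl^{q/N}_{q/N}(T_1)$ (in addition to $\cl^q_q(T_1)$); the $H_{N,j}$ parts recombine, via the commutation property $\Lambda_{j,n}\mathscr{B}_N=\mathscr{B}_N\Lambda_{j,n}$, into multilinear operators applied to $e^{t\Delta}\sum_j\Lambda_{j,n}\phi_j$ (bounded from the data), while the $Z_{N,j}$ parts are summed via orthogonality in the smoother $\cl^{q/N}_{q/N}$ space. This sum-of-spaces structure is precisely what the perturbation Proposition~\ref{pdr1} is designed to accept (via the $v_1+v_2$ and $f_1+f_2$ inputs), and without it the paraproduct term $\mathcal{T}_{W_n^{J,1}}F_n^{J,1}$ in the source (Lemma~\ref{lemmasource}) cannot be controlled when $p$ is large.
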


\begin{remark}\label{russverextend}
As in \cite[Theorem 9]{GKP}, Theorem~{\rm\ref{thm:profa}} above automatically implies the existence (and relevant compactness) of ``minimal blow-up initial data" in the spaces $\bespq$ for all $p,q \in (3,\infty)$.  This extends the range in \cite{GKP}, which was restricted to $p,q < 9$.  Moreover, due to Remark {\rm\ref{improvedpdorth}} below which improves  the constant to one in inequality  {\rm(\ref{orth3a})}, the results are true in the original Besov norm given in Definition  {\rm\ref{d1}}(and not just in the equivalent wavelet norm used in \cite{gk}).  All of these results generalize the original result in \cite{sverakrusin} which treated the $\dot H^{\frac 12}(\rt)$ case.
\end{remark}
\noindent
We also have the following important orthogonality result, which is the analogue of Claim 3.3 of~\cite{GKP} and will be proved in Section \ref{proofclaim33}. To state the result, note first that in case an application of Theorem~\ref{thm:profa} yields a non-empty blow-up set $I$, thanks to~(\ref{orth3a}) we also know that there is some $J^* \in \N$ such that after re-ordering the profiles
\begin{equation} \label{defJ0}
T_j^* < \infty \iff 1\leq j \leq J^* \, .
\end{equation}
Then we can again re-order those first~$J^*$ profiles, thanks to the orthogonality (\ref{orthseq}) of the scales~$\lambda_{j,n}$, so that for
$n_0 = n_0(J^*)$ sufficiently large, we have
\begin{equation}\label{reorder}
 \forall n\geq n_0 \, ,\qquad 1 \leq  j\leq j' \leq J^* \quad \Longrightarrow \quad \lambda
_{j,n} ^2 T_j^* \: \: \leq \: \: \lambda_{j',n} ^2 T_{j'}^* \, .
\end{equation}
\begin{prop}\label{claim33}
Let~$(u_{0,n})_n$ be a sequence of divergence-free data which are bounded in~$\bpp$ and for which the set $I$ of blow-up profile indices resulting from an application of Theorem {\rm\ref{thm:profa}} is non-empty.  After re-ordering the profiles in the profile decomposition of~$u_n:={\rm NS}(u_{0,n})$ so that
{\rm(\ref{defJ0})} and~{\rm(\ref{reorder})} hold for some $J^* \in \N$, setting $t_n:=
\lambda _{1,n}^2s$ for $s\in [0,T^*_1)$  one
has (after possibly passing to a subsequence in $n$)
$$
\|u_n(t_n)\|_{\bpp}^p = \|\big(  \Lambda_{1,n}U_1 \big) (t_n)\|_{\bpp}^p  +
\|u_n(t_n) -\big(  \Lambda_{1,n}U_1 \big) (t_n)\|_{\bpp}^p +\e(n,s) \, ,
$$
where $\e(n,s) \to 0$ as~$n \to \infty$ for each fixed $s\in [0,T^*_1)$.
\end{prop}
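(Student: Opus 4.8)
The plan is to apply Theorem~\ref{thm:profa} at the fixed interior time $s\in[0,T_1^*)$, rescaled back to $t_n=\lambda_{1,n}^2 s$, and then to show that, in the decomposition~(\ref{evolprof}), only the first profile $\Lambda_{1,n}U_1$ contributes to the $\bpp$-norm of $u_n(t_n)$ asymptotically, while everything else collapses into the ``second term'' $u_n(t_n)-(\Lambda_{1,n}U_1)(t_n)$ in an asymptotically orthogonal way. First I would fix $J>J_0$ large and $n>N(J)$, and write, at time $t_n$,
\[
u_n(t_n) = (\Lambda_{1,n}U_1)(t_n) + v_n^J(t_n)\,, \qquad
v_n^J(t_n):=\sum_{j=2}^J (\Lambda_{j,n}U_j)(t_n) + w_n^J(t_n) + r_n^J(t_n)\,.
\]
Here one must first check that $t_n<\tau_n$ for $n$ large: since the reordering~(\ref{reorder}) puts $j=1$ at the bottom of the stack of blow-up profiles and $s<T_1^*$, choosing $T_1$ with $s<T_1<T_1^*$ gives $t_n=\lambda_{1,n}^2 s<\lambda_{1,n}^2 T_1 = \min_{j\in I}\lambda_{j,n}^2 T_j=\tau_n$ for $n$ large (using $1\in I$ and $T_1\le T_1^*\le T_j^*$ after reordering, together with $T_j<T_j^*$ chosen close enough). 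So the decomposition is valid at $t_n$.

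The core is an asymptotic-Pythagoras statement in $\bpp$ for the splitting $u_n(t_n)=(\Lambda_{1,n}U_1)(t_n)+v_n^J(t_n)$. I would prove this exactly as Claim 3.3 of~\cite{GKP}: expand $\|u_n(t_n)\|_{\bpp}^p = \|\Delta_k u_n(t_n)\|_{\ell^p_k L^p_x}^p$ via Definition~\ref{d1}, insert the splitting inside each Littlewood--Paley block, and use the elementary inequality $\big|\,|a+b|^p-|a|^p-|b|^p\,\big|\lesssim |a|^{p-1}|b|+|a||b|^{p-1}$ pointwise in $(x,t)$ (recall $p\ge 3$, integer after the reductions of Section~\ref{prooftheorem}, so this is clean). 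This reduces matters to showing that the cross terms
\[
\big\| \,(\Lambda_{1,n}U_1)(t_n)\,\big\|^{p-1}_{\bpp}\big\|v_n^J(t_n)\big\|_{\bpp}
\quad\text{and}\quad
\big\|(\Lambda_{1,n}U_1)(t_n)\big\|_{\bpp}\,\big\|v_n^J(t_n)\big\|^{p-1}_{\bpp}
\]
are controlled — but in fact the cross terms must be shown to vanish, which is NOT true termwise (the profiles $U_j$ may have comparable norms); the orthogonality must be extracted more carefully. The correct route, following~\cite{GKP}, is to use that $\Lambda_{1,n}U_1(t_n)=\Lambda_{1,n}(U_1(s))$ is a \emph{fixed} rescaled profile, and to use the (frequency-space and physical-space) orthogonality of the cores/scales $(\lambda_{j,n},x_{j,n})_{j\ge2}$ relative to $(\lambda_{1,n},x_{1,n})\equiv(1,0)$: after the change of variables normalizing the first profile, each $(\Lambda_{j,n}U_j)(t_n)$ for $j\ge 2$ converges weakly to $0$ (either $\lambda_{j,n}\to 0,\infty$ or $|x_{j,n}|\to\infty$), and $w_n^J(t_n)=\etl\psi_n^J$ also converges weakly to $0$ by~(\ref{orth2a}) after the same normalization. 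Combined with the $\bpp$-boundedness of all pieces (from $\sup_n\|u_n\|_{\mathscr{L}^{1:\infty}_p(\tau_n)}<\infty$, the embedding $\mathscr{L}^{1:\infty}_p(\tau_n)\hookrightarrow \mathcal{C}([0,\tau_n];\bpp)$, and~(\ref{orth3a})), weak convergence to $0$ together with the (locally compact, wavelet) structure of the $\bpp$ norm forces the cross terms of $\Lambda_{1,n}U_1(t_n)$ with $v_n^J(t_n)-\big[\text{the part of }v_n^J\text{ that stays at scale/core }(1,0)\big]$ to vanish; but by orthogonality no profile $j\ge 2$ and no remainder stays at $(1,0)$, so the whole cross term vanishes as $n\to\infty$ (for fixed $J$), up to an error $\e_J(n)\to_n 0$.

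Finally I would let $J\to\infty$: the remainders obey $\limsup_n\|w_n^J\|_{\mathscr{L}^{1:\infty}_q(\infty)}\to 0$ and $\limsup_n\|r_n^J\|_{\mathscr{L}^{2:\infty}_q(\tau_n)}\to 0$ by~(\ref{remainders}), but since $q>p$ these control Besov norms at a \emph{different} integrability exponent; to bring them back to $\bpp$ at the single time $t_n$ one interpolates/embeds against the uniform $\mathscr{L}^{1:\infty}_p$ bound (this is the standard ``remainder is small in the weaker space, bounded in the scaling space, hence small at intermediate times'' argument, cf.~\cite{GKP}), producing $\limsup_n\|w_n^J(t_n)+r_n^J(t_n)\|_{\bpp}\to 0$ as $J\to\infty$. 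A diagonal argument in $(n,J)$ then collapses $v_n^J(t_n)$ to $u_n(t_n)-(\Lambda_{1,n}U_1)(t_n)$ and yields the claimed identity with $\e(n,s)\to 0$ for each fixed $s$, after possibly passing to a subsequence in $n$ (to make the weak limits and the diagonal extraction compatible). I expect the main obstacle to be precisely the step of making the ``orthogonality kills the cross terms'' rigorous in the $\bpp$ norm for large $p$: unlike the Hilbertian $\dot H^{1/2}$ case, there is no inner product, so one must run the argument through the wavelet characterization and the explicit scaling/translation orthogonality of the cores, carefully handling the three regimes $\lambda_{j,n}\to 0$, $\lambda_{j,n}\to\infty$, $|x_{j,n}|\to\infty$ separately, exactly as in \cite[Claim 3.3]{GKP} — the present statement is essentially its verbatim analogue once Theorem~\ref{thm:profa} is in hand, so the novelty is only in checking that the (now larger) range of $p$ causes no new difficulty, which it does not since $\bpp$-boundedness is supplied uniformly by Theorem~\ref{thm:profa}.
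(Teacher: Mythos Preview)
Your overall strategy is right, but the step handling the remainder terms $w_n^J(t_n)+r_n^J(t_n)$ contains a genuine gap. You write that since these are small in $\mathscr{L}^{1:\infty}_q$ and $\mathscr{L}^{2:\infty}_q$ with $q>p$, one can ``interpolate/embed against the uniform $\mathscr{L}^{1:\infty}_p$ bound'' to obtain $\limsup_n\|w_n^J(t_n)+r_n^J(t_n)\|_{\bpp}\to 0$. This does not work: $\dot B^{s_q}_{q,q}$ is strictly \emph{larger} than $\bpp$, so smallness in the former together with mere boundedness in the latter yields no smallness in $\bpp$. In fact the remainders need not be small in $\bpp$ at all (cf.~(\ref{orth2ac}), which gives only boundedness of $\psi_n^J$ in $\bpp$). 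Consequently your proposed diagonal argument collapsing $v_n^J(t_n)$ to the full difference in $\bpp$ cannot be carried out as described.

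The paper avoids this by never asking for smallness of the remainder in $\bpp$. After expanding at the Littlewood--Paley level (using that $p$ is an integer), the cross term is
\[
\sum_{j\in\Z}2^{jps_p}\int \big|\Delta_j(\Lambda_{1,n}U_1)(t_n)\big|^r\,\big|\Delta_j v_n^{(2,J)}\big|^{p-r}\,dx,
\qquad 1\le r\le p-1,
\]
with $v_n^{(2,J)}=w_n^J(t_n)+r_n^J(t_n)$. The key move is H\"older in $x$ placing all $p-r$ factors of the remainder in $L^\infty$: this bounds the cross term by $\|U_1(s)\|_{\dot B^{s_r}_{r,r}}^r\,\|v_n^{(2,J)}\|_{\dot B^{-1}_{\infty,\infty}}^{p-r}$. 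Now $\dot B^{s_q}_{q,q}\hookrightarrow \dot B^{-1}_{\infty,\infty}$, so the remainder \emph{is} small in $\dot B^{-1}_{\infty,\infty}$; the price is that the fixed profile must be controlled in $\dot B^{s_r}_{r,r}$ for $r<p$, which holds because $U_1(s)$ is smooth for each $s\in(0,T_1^*)$. That redistribution of integrability --- the profile absorbs the extra regularity, the remainder drops to the weakest endpoint --- is the missing idea in your proposal. For the other profiles $j\ge 2$ the paper proceeds by density (reducing to band-limited, compactly supported approximants) and then treats the three regimes $\lambda_{1,n}/\lambda_{j,n}\to 0$, $\to\infty$, and $\lambda_{1,n}\equiv\lambda_{j,n}$ with $|x_{1,n}-x_{j,n}|/\lambda_{1,n}\to\infty$ by explicit frequency-support disjointness or dominated convergence, rather than an abstract weak-convergence/wavelet argument. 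Note finally that after the reordering (\ref{defJ0})--(\ref{reorder}) one does \emph{not} have $(\lambda_{1,n},x_{1,n})\equiv(1,0)$ in general, contrary to what you assume; the paper's proof does not use that identification (and indeed, as noted in Remark~\ref{improvedpdorth}, the argument here differs from \cite[Claim~3.3]{GKP} precisely in not relying on any special property of the first profile).
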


\begin{remark}\label{improvedpdorth}
Note that the proof of Proposition~{\rm\ref{claim33}} (which does not use any special property of the first profile in particular, unlike our proof of the analogous result in \cite{GKP}) actually shows that we may improve {\rm(\ref{orth3a})} to a true orthogonality of the original profile decomposition ($s=0$) of the form
\begin{equation}\label{improvedortheq}
\|f_n\|_{\bpp}^p = \sum_{j=1}^J\|\phi_j\|_{\bpp}^p  +
\|\psi_n^J\|_{\bpp}^p +\e(n,J) \, , \quad \lim_{J\to\infty}\limsup_{n\to\infty} \e(n,J) = 0
\end{equation}
(analogous to the orthogonality proved in \cite{PG}, but lacking in \cite{Jaffard,gk}), thus improving the original result in \cite{gk}.  The above orthogonality on the flows could similarly be improved to include the other profile flows and remainders (which would extend~{\rm(\ref{improvedortheq})} to $s>0$), but it is sufficient as stated for our purposes.
\end{remark}


\subsection{[Step 1]  Existence of a critical element}
To prove Proposition~\ref{propexistcrit}, we explicitly construct an element of~${\mathcal D}_c$: this turns out to be a profile of the initial data of a minimizing sequence of~$A_c$. So let us consider a sequence~$u_{0,n}$, bounded in the space~$\bpp$, such that its life span satisfies~$T^*(u_{0,n}) < \infty$ for each $n\in \N$ and such that~$ A_n:= \sup_{t\in [0,T^*(u_{0,n}))}\|{\rm NS}(u_{0,n})(t)\|_{\bpp}  $ satisfies
$$
A_c \leq A_n \quad \mbox{and} \quad A_n \to A_c \,, \quad n \to \infty \, .
$$
Applying Theorem~\ref{thm:profa} above to $u_{0,n}$
we find that, in the notation of  Theorem~\ref{thm:profa} (and up to a subsequence extraction), for all~$t < \tau_n$,
 the solutions $u_n = {\rm NS}(u_{0,n})$ satisfy
$$
u_n(t) = \sum_{j=1}^J   \Lambda_{j,n}U_j(t) + w_n^J(t) + r_n^J(t)
$$
with~$U_j={\rm NS}(\phi_j)$ where~$(\phi_j)_{j \geq 1}$ are the
profiles of~$u_{0,n}$ according to the initial data decomposition
provided in Theorem~\ref{thm:dataprofb} (with $f_n:=u_{0,n}$), and for $q>p$ as in Theorem~\ref{thm:profa}, recall that  $$
 \lim_{J\rightarrow \infty}
\big(\limsup_{n \rightarrow \infty}
\|w_n^J\|_{ {\mathscr L}^{1:\infty}_{q}(\infty)} \big) = \lim_{J\rightarrow \infty}
\big(\limsup_{n \rightarrow \infty}
\|r_n^J\|_{ {\mathscr L}^{2:\infty}_{q}( \tau_n )} \big) = 0 \, .
$$
Defining $T^*_j:=T^*(\phi_j)$ to be the life span of~$U_j=\mathrm{NS}(\phi_j)$,
Theorem~\ref{thm:profa}  also ensures that there is~$j_0\in \N$ such that~$T^*_{j_0}< \infty$ (if not we would have~$\tau_n\equiv \infty$ and
hence~$T^*(u_{0,n}) \equiv \infty$, contrary to our assumption), and hence we may re-order the profiles so that with the new ordering (\ref{defJ0}) and (\ref{reorder}) hold for some $J^* \in \N$. Notice that in particular~$T^*_1 < \infty$, hence by definition of~$A_c$ we know that
\begin{equation}\label{largerthanAc}
\sup_{s\in [0,T^*_1)} \|U_1(s)\|_{\bpp}
\geq A_c\, .
\end{equation}
\noindent Then  Proposition \ref{claim33} above implies that  for any~$s\in (0,T^*_1)$, setting~$t_n:=
\lambda _{1,n}^2s$,  $$
\begin{aligned}
A_n:= \sup_{t\in [0,T^*(u_{0,n}))} \|u_n(t)\|_{\bpp} & \geq
\|u_n(t_n)\|_{\bpp} \\
& \geq \|\big(  \Lambda_{1,n}U_1 \big) (t_n)\|_{\bpp} + \e(n,s) \\
& =  \|U_1(s)\|_{\bpp} + \e(n,s)
\end{aligned}
 $$
which with~(\ref{largerthanAc}) and the fact that~$A_n \to A_c$ as~$n \to \infty$ implies that~$\phi_1 $ belongs to~$ {\mathcal D_c}$. Proposition~\ref{propexistcrit} is proved. \hfill $\Box$
\\

\subsection{[Step 2] Compactness at blow-up time of  critical elements}
To prove Proposition~\ref{compactcrit} we choose~$u_{0,c} \in {\mathcal D}_c$ (such an element exists if $A_c <\infty$ thanks to Proposition~\ref{propexistcrit}) and we pick a sequence of times~$s_n$ such that~$s_n \nearrow T^*(u_{0,c})$. We then define the sequence~$u_{0,n}:=u_c(s_n)$, where~$u_c:={\rm NS}(u_{0,c})$, which is bounded and to which we apply Theorem~\ref{thm:profa} (and pass to the subsequence given there). As in the proof of Proposition~\ref{propexistcrit} above we may re-arrange the first~$J^*$ terms of the profile decomposition so that~{\rm(\ref{defJ0})} and~{\rm(\ref{reorder})} hold and we have clearly
$$
 \lambda _{1,n}^2 T^*_1 \leq
T^*(u_{0,n}) = T^*(u_{0,c}) - s_n
$$
for large $n$, and hence
\begin{equation}\label{lambda0nto0}
\lambda _{1,n} \to 0 \,  , \quad n\to\infty \, .
\end{equation}
Let us denote by~$j_0$ the (unique)
index, after this re-numbering, satisfying~$\lambda_{j_0,n} \equiv 1$ and~$x_{j_0,n} \equiv0$, so that~$\phi_{j_0}$ is the weak limit of~$u_{0,n}$. Note that due to (\ref{lambda0nto0}), $j_0 \neq 1$.  To prove the proposition we need to show that~$\phi_{j_0} \equiv0$.
\\\\
As in the the proof of Proposition~\ref{propexistcrit} again, Proposition~\ref{claim33}
 implies that~$NS(\phi_1)$ is a
critical element ($\phi_1 \in \mathcal{D}_c$) since we have
$$A_n:= \sup_{t\in [0,T^*(u_{0,n}))} \|{\rm NS}(u_{0,n})(t)\|_{\bpp} = \sup_{t\in [s_n,T^*(u_{0,c}))} \|u_c(t)\|_{\bpp} \equiv A_c $$
for all $n$, due to the definition of $A_c$ and the fact that
$T^*(u_{0,c}) < \infty$.  Now let~$\varepsilon>0$ be fixed, and
choose~$s\in (0,T^*(u_{0,c}))$ such that, writing~$U_1:={\rm
  NS}(\phi_1)$,
$$
A_c^p -
\|U_1(s)\|_{\bpp}^p < \varepsilon /2\, ,
$$
which is possible thanks to the time-continuity in (\ref{timecontinuous}) of solutions to~(NS) in~$\bpp$. Proposition~\ref{claim33} and Sobolev embeddings (since $q>p$, cf. (\ref{bernst}))
 then imply that, defining~$u_n:= {\rm NS}(u_{0,n})$ and~$t_n:=\lambda
 _{1,n}^2s$,
 \begin{align*}
   A_c^p  & \geq \|u_n(t_n)\|_\bpp^p \\
          & \geq \|U_1(s)\|_\bpp^p+ \big \|\sum_{j=2}^{J} ( \Lambda_{j,n}U_j)(t_n)+ w_n^J(t_n) + r_n^J(t_n)\big \|_{{\dot B^{s_p}_{p,p}}}^p +  \epsilon (n,s)\\
          & \geq \|U_1(s)\|_{\bespp}^p
+ C\|\sum_{j=2}^{J}
\Lambda_{j,n}U_j(t_n) + w_n^J(t_n) + r_n^J(t_n) \|_{{\dot B^{s_q}_{q,q}}}^p + \epsilon (n,s)
 \end{align*}
\\\\
 where~$\epsilon (n,s) \to 0$ as $n\to \infty $.
\\\\
Choosing~$J$ large enough so that
 $$
 C\|w_n^J(t_n) + r_n^J(t_n) \|_{\dot B^{s_q}_{q,q}}^p \leq \varepsilon /2\, ,
  $$
for sufficiently large $n$,  we find that
\begin{equation}\label{preorthogonalitybesovprofiles}
\big\|\sum_{j=2}^{J}( \Lambda_{j,n}U_j)(t_n)\big\|_{{\dot B^{s_q}_{q,q}}}^q \lesssim   \varepsilon -   \epsilon (n,s) \, .
\end{equation}
But orthogonality arguments (see the proof of~\cite[Lemma 3.6]{GKP}) show that
\begin{equation}\label{orthogonalitybesovprofiles}
\sum_{j=2}^{J}  \| ( \Lambda_{j,n}U_j)(t_n)\|_{{\dot B^{s_q}_{q,q}}}^q
= \|\sum_{j=2}^{J} ( \Lambda_{j,n}U_j)(t_n)\|_{{\dot B^{s_q}_{q,q}}}^q+ \varepsilon(J,n)
\end{equation}
where for each~$J$, $\varepsilon(J,n)
 \to 0$ when~$n \to \infty$. In particular for~$j = j_0$, (\ref{preorthogonalitybesovprofiles}) and~(\ref{orthogonalitybesovprofiles}) together {(along with (\ref{timecontinuous}))} imply
 $$
 \|\phi_{j_0}  \|_{{\dot B^{s_q}_{q,q}}} = \|U_{j_0}(0) \|_{{\dot B^{s_q}_{q,q}}} \lesssim \varepsilon
 $$
since~$t_n \to 0$ as~$n \to \infty$, and hence~$\phi_{j_0} \equiv0$ which proves Proposition~\ref{compactcrit}. \hfill $\Box$
\\

\subsection{[Step 3]  Rigidity of critical elements}
The proof of Proposition~\ref{globalcrit} (which functions here as a ``rigidity theorem", in the ``concentration-compactness" proof of Theorem \ref{mainthm},  cf. e.g. \cite{KMNLS}) is based on a backwards uniqueness argument similar to that in \cite{ess} (see also~\cite{GKP,kk}).
However in order to implement this argument we need
to recover some positive regularity on the solution near blow up time. This is the purpose of the next statement, proved in
 Section \ref{iterationsection} and Section \ref{invertibilitysection} below.
\begin{prop}[Positive regularity at blow-up]\label{lemmaimprovedb}
Fix $p=3\cdot 2^{k}-2$, with an integer $k\geq 2$.   For $u_0 $ belonging to~$ \dot B^{s_p}_{p,p}$, set $T^*:=T^*(u_0)$ and define the associated solution~$u:=NS(u_0) $ in~$ \mathscr{L}^{1:\infty}_{p}[T<T^*]$.  If $T^*<\infty$ and~$u\in L^\infty(0,T^*;\dot B^{s_p}_{p,p})$, then there exist $ v, w \in \mathscr{L}^{1:\infty}_{p}[T<T^*]$ such that
$$u =  v +  w \quad \textrm{in} \, \,   \,  \mathscr{L}^{1:\infty}_{p}[T<T^*]$$
and such that moreover, for some   $\varepsilon \in (0,T^*)$,
$$
v \in L^\infty(T^*-\varepsilon,T^*;L^p(\rt)) \quad \textrm{and} \quad
w \in L^3(0,T^*;\ltrt)\, .
$$
\end{prop}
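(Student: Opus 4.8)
The plan is to split the solution by separating its linear part from the nonlinear (Duhamel) part, i.e. to take
\[
v:=\etl u_0 \qquad\text{and}\qquad w:=u-v=B(u,u)\,,
\]
and to reduce the whole statement to the single assertion $w=B(u,u)\in L^3(0,T^*;\ltrt)$. The easy half of the reduction goes as follows. Since $u_0\in\bpp$, the heat estimates collected in Appendix~\ref{paraproductsapp} give $v\in{\mathscr L}^{1:\infty}_{p}(\infty)$, hence $w=u-v\in {\mathscr L}^{1:\infty}_{p}[T<T^*]$ because $u\in {\mathscr L}^{1:\infty}_{p}[T<T^*]$ by hypothesis, and $u=v+w$ trivially; moreover $v\in L^\infty(0,T^*;\bpp)$, which is part of what is needed. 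For the $L^p$ bound on $v$, note that away from $t=0$ the heat flow maps $\bpp$ boundedly into $L^p(\rt)$: by Bernstein's inequalities (\ref{bernst}) the exponential frequency decay of $\etl$ beats the loss $2^{-js_p}$ and yields $\|\etl u_0\|_{L^p}\lesssim t^{s_p/2}\|u_0\|_{\bpp}$; since $s_p=-1+\tfrac3p<0$ and $T^*<\infty$, this is uniformly bounded for $t\in[T^*/2,T^*)$, so $v\in L^\infty(T^*/2,T^*;L^p(\rt))$. Thus, taking $\varepsilon:=T^*/2$, the proof is complete once we know $B(u,u)\in L^3(0,T^*;\ltrt)$.

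For $B(u,u)$, there is no difficulty \emph{away} from the blow-up time: on any $[0,T_1]$ with $T_1<T^*$ the local control $u\in {\mathscr L}^{1:\infty}_{p}(T_1)$, together with the smoothing of the Duhamel operator and Bernstein embeddings, places $B(u,u)$ in $L^3(0,T_1;\ltrt)$, with a bound that of course degrades as $T_1\nearrow T^*$. To obtain the bound \emph{up to} $T^*$ one must exploit the global-in-time a priori hypothesis $u\in L^\infty(0,T^*;\bpp)$, and this is exactly where the iteration of Sections~\ref{iterationsection} and~\ref{invertibilitysection}, and the elementary decomposition of Lemma~\ref{decompositionNS}, come in.

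The core difficulty — and the main obstacle — is that for $p=3\cdot2^{k}-2\geq 10$ the regularity index $s_p$ is strongly negative, so the a priori bound does \emph{not} control the product $u\otimes u$: the resonant part of the paraproduct $u\cdot u$ is only tame when (roughly) $2s_p+\tfrac3p>0$, i.e. $p<\tfrac92$, so a direct bilinear estimate on $B(u,u)$ from $u\in L^\infty_t\bpp$ fails. One therefore bootstraps. Writing $u$ via Duhamel and via Lemma~\ref{decompositionNS} as a heat-extension part plus a part smooth prior to $T^*$, and inserting the a priori bound, one upgrades $u$ — \emph{uniformly} as $t\nearrow T^*$, which is the crucial point and the reason the global a priori bound (not just interior smoothing) is indispensable — from $L^\infty_t\bpp$ to $L^\infty_t\dot B^{s_{p_\ell}}_{p_\ell,p_\ell}$ along a finite sequence of exponents $p=p_0>p_1>\cdots$ that roughly halve the ``deficit'' toward the index $3$; the choice $p=3\cdot2^{k}-2$ is tailored precisely so that the scheme terminates after $k$ steps in the index range already treated in \cite{GKP}. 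Each step $p_\ell\to p_{\ell+1}$ amounts to solving for $u$ a perturbed linear equation $(\mathrm{Id}-\mathcal L)u=(\text{controlled data})$, and the required invertibility of $\mathrm{Id}-\mathcal L$ in the relevant path space is the content of Section~\ref{invertibilitysection}. Once $u$ is known, uniformly near $T^*$, in a space with $p_\ell$ small enough, the $L^3$-type arguments from \cite{GKP} apply verbatim and give $B(u,u)\in L^3(0,T^*;\ltrt)$, which finishes the proof. The places I expect to be delicate are making each bootstrap step close in spite of the negative regularity, establishing the invertibility of $\mathrm{Id}-\mathcal L$, and keeping all the gained bounds uniform up to $T^*$.
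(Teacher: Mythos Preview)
Your splitting $v=e^{t\Delta}u_0$, $w=B(u,u)$ and the verification that $v\in L^\infty(T^*/2,T^*;L^p)$ are correct and easy. The real gap is in your claim that $B(u,u)\in L^3(0,T^*;\ltrt)$: your last paragraph does not prove this, and in fact the iteration you describe does \emph{not} yield it. The key mis-step is the assertion that one ``upgrades $u$\ldots from $L^\infty_t\bpp$ to $L^\infty_t\dot B^{s_{p_\ell}}_{p_\ell,p_\ell}$ along a finite sequence of exponents''. You cannot upgrade $u$ itself to a smaller-$p$ critical Besov space uniformly up to $T^*$; if you could, that would essentially be the $L^3$ regularity criterion already. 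What the iteration of Sections~\ref{iterationsection}--\ref{invertibilitysection} actually does is produce a \emph{decomposition}: $u=\sum_{n=0}^k u_{L,n}+w_k$ (Theorem~\ref{expandprop}), where each $u_{L,n}$ is controlled in $\mathscr K_{p:\infty}(T^*)$ and only the remainder $w_k$ gains positive regularity up to $T^*$, courtesy of the a priori bound.

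Consequently the paper's $(v,w)$ is \emph{not} $(e^{t\Delta}u_0,B(u,u))$. One first sets $\tilde v_0:=\sum_n u_{L,n}$ and $\tilde w_0:=w_k$, and then iterates further (Proposition~\ref{propbbb}): at each step $\tilde w_{j+1}:=B(\tilde w_j,\tilde w_j)$ and all cross terms $B_\sigma(\tilde v_j,\tilde w_j)$, $B(\tilde v_j,\tilde v_j)$ are pushed into $\tilde v_{j+1}$, which is kept in the Kato space $\mathscr K_{p:\infty}(T^*)$. After finitely many steps $\tilde w_j$ reaches $\mathscr L^\infty_{1,\infty}(T^*)$ and then $L^3(0,T^*;L^3)$, while $\tilde v_j$ (not $e^{t\Delta}u_0$) is the $v$ with the $L^\infty_tL^p$ bound. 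In particular the paper never proves $B(u,u)\in L^3_tL^3_x$; what ends up in $L^3_tL^3_x$ is a much more iterated object, and the difference $B(u,u)-\tilde w_{k_0+2}=\tilde v_{k_0+2}-e^{t\Delta}u_0$ lives only in $\mathscr K_{p:\infty}(T^*)$, which is not $L^3_tL^3_x$. Your sketch conflates ``decompose and upgrade a remainder'' with ``upgrade $u$'', and as a result the target $B(u,u)\in L^3_tL^3_x$ is not what the machinery delivers.
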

\noindent Let us apply Proposition~\ref{lemmaimprovedb}, to $u=NS(u_0)$ and $T^* = T^*(u_0)$ as in the assumptions of Proposition~\ref{globalcrit}. If $T^* <\infty$ then we can write $u$ as above, for some such $v$, $w$ and $\varepsilon $.
As $T^*<\infty$, we moreover have
$$v\in L^p(\rt \times (T^* - \varepsilon,T^*))\, .$$
Fix any $R>0$ and set
$$Q_{\varepsilon,R}(x):= \{(y,t)\in \rt \times \R \, | \, |y-x| < R\, , \, t\in (T^* - \varepsilon,T^*)\} \, .$$
As $p>3$, for fixed $\varepsilon, R >0$ we have
$$\|u\|_{L^3(Q_{\varepsilon,R}(x))} \lesssim \|v\|_{L^p(Q_{\varepsilon,R}(x))} +\|w\|_{L^3(Q_{\varepsilon,R}(x))} \longrightarrow 0 \quad \textrm{as} \quad |x| \to \infty \, .  $$
This is the key ``smallness" required in the ``$\varepsilon$-regularity" theory for ``suitable weak solutions".  That theory requires similar estimates for the pressure.  Since we consider ``mild" solutions (solutions to~(\ref{NSduhamel})), $u$ actually satisfies (NS) with pressure $\pi$ given by
$$\pi = \mathcal{R} \otimes \mathcal{R}:u \otimes u = \sum_{j,k=1}^3 \mathcal{R}_j \mathcal{R}_k (u_j u_k)\, ,$$
where $\mathcal{R}:=(\mathcal{R}_1,\mathcal{R}_2,\mathcal{R}_3)$ and where $\mathcal{R}_j$ is the $j$-th Riesz transform given by the Fourier multiplier $i\xi_j / |\xi|$.  Hence we may write
$$\pi = \mathcal{R} \otimes \mathcal{R}:(v+w) \otimes (v+w) = \mathcal{R} \otimes \mathcal{R}:[v \otimes v+w \otimes v+v \otimes w+w \otimes w]$$
and hence the standard Calder\'on-Zygmund estimates imply that
$$\pi \in L^{\frac p2}(\rt \times (T^* - \varepsilon,T^*)) + L^{\frac{p_1}2}(\rt \times (T^* - \varepsilon,T^*)) + L^{\frac 32}(\rt \times (T^* - \varepsilon,T^*))$$
for some $p_1 > 3$.  Hence in a similar way as above we have
$$\|\pi\|_{L^{\frac 32}(Q_{\varepsilon,R}(x))}  \longrightarrow 0 \quad \textrm{as} \quad |x| \to \infty \, .  $$
As we now have
$$(u,\pi) \in L^3_{\textrm{loc}}(\rt \times (T^* - \varepsilon,T^*)) \times L^{\frac 32}_{\textrm{loc}}(\rt \times (T^* - \varepsilon,T^*))$$
with the above spatial decay of the local norms, we can conclude as in \cite{kk} (since moreover $u$ belongs to~$ \mathcal{C}^\infty(\rt \times (0,T^*))$, cf. (\ref{timecontinuous})) that $(u,\pi)$ forms a suitable weak solution and is smooth at and near the time $T^*$ outside of some large compact set $K\subset \rt$.  Hence if $u(t) \to 0$  in~$ \mathcal{S}' $ as~$ t \nearrow T^*$, we can conclude that actually $u(x,T^*) \equiv 0$ for all $x\in K^c$, and backwards uniqueness and unique continuation (note that in applying the latter we need the smoothness inside $K$ at earlier times provided by (\ref{timecontinuous})) applied to the vorticity $\omega:= \nabla \times u$ as in \cite{ess} allow us to conclude that in fact $u(\cdot , t) \equiv 0$ for some~$t \in (0,T^*)$; we refer to~\cite{kk} for more details, including the statements of the backwards uniqueness and unique continuation results. Therefore  $T^* = \infty$ by small data results, contrary to assumption, which proves   Proposition~\ref{globalcrit}. \hfill $\Box$
\\\\
Theorem  \ref{mainthm}  is now proved.  In what follows, we shall prove all of the results stated without proof above.

 \section{Besov space profile decompositions for solutions to Navier-Stokes}\label{profiledecompo}
 \subsection{The Navier-Stokes evolution of profile decompositions: proof of Theorem \ref{thm:profa}}\label{profiledecompoa}  \quad  The proof of Theorem \ref{thm:profa} follows closely the arguments of~\cite{GKP}, up to the fact that we are considering rough initial data since~$p$ is arbitrarily large (but finite).  We first use   Theorem \ref{thm:dataprofb} to decompose the sequence of initial data, and then with the notation of  Theorem \ref{thm:profa}
  we write
 $$
 u_n:= {\rm NS}(u_{0,n}) \, , \quad U_j := {\rm NS}(\phi_j)\in  {\mathscr L}^{1:\infty}_{p}[T<T_j^*]\quad
\mbox{and} \quad w_n^J := e^{t\Delta}(\psi_n^J) \in  {\mathscr L}^{1:\infty}_{p}(\infty) \, .
 $$
In view of (\ref{orth2a}), the standard linear heat estimate (\ref{dataheatest}) implies    that
 \begin{equation}
   \label{eq:heatw}
   \lim_{J\rightarrow \infty} \Big(\limsup_{n \rightarrow \infty}
\|w_n^J\|_{  {\mathscr L}^{1:\infty}_{q}(\infty)} \Big) = 0\, .
 \end{equation}
Due to the stability property~\eqref{orth3a}, the
sequence~$\displaystyle( \phi_j)_{j \geq 1}$ goes to zero in the space~${\dot B^{s_{p}}_{p,p}}$ as~$j$ goes to
infinity.  This implies that
there is~$J_0 $ such that for all~$j > J_0$, there is a global unique
solution associated with~$\phi_j$, as $\|\phi_j\|_{\dot
  B^{s_p}_{p,p}}<c_0$ (the smallness constant of small data
theory). Hence,~$I$ will be a subset of $\{1,\dots,J_0\}$ which
proves the first part of the first statement in Theorem~\ref{thm:profa}.
Note that~\eqref{orth3a} implies, by Sobolev embeddings along with the fact that~$\ell^p \hookrightarrow \ell^q$, that
 \begin{equation}
   \label{ortho33}
\sum_{j\geq 1} \|\phi_j\|_{\dot B^{s_q}_{q,q}}^q  < \infty \, .
 \end{equation}
\noindent
By the local Cauchy theory we can solve the Navier-Stokes system with data~$u_{0,n}$ for each
integer~$n$, and produce a unique mild solution~$u_n \in {\mathscr L}^{1:\infty}_{p}[T<T^*(u_{0,n}) ]$.
Now let us define, for any~$J \geq1$
$$
r_n^J   := u_n  - \big( \sum_{j = 1}^{J}  \Lambda_{j,n}
U_j  + w_n^J  \big)\,,
$$
where we recall that $  \Lambda_{1,n} {U}_{1}
 := U_1 $. Note that the lifetime of $  \Lambda_{j,n}U_j$ is $\lambda_{j,n}^2 T_j^*$, where~$ T_j^*$ is the lifetime of~$\phi_j$. Therefore, the function~$r_n^J(x, \cdot)$ is defined a priori for~$t \in
[0,t_n)$, where
$$
t_n = \min (T^*(u_{0,n}); \tau_n)
$$
with the notation of Theorem~\ref{thm:profa}.  Our main goal is to prove that~$r_n^J$ is actually defined on~$[0,\tau_n^*)$ (at least
if~$J$ and $n$ are large enough),
which will be a consequence of
perturbation theory for the Navier-Stokes equations, recalled in Appendix~\ref{perturbationtheoryNS}. In the process, we
shall obtain the desired uniform limiting property, namely
$$
 \lim_{J\rightarrow \infty} \Big(\limsup_{n \rightarrow \infty}
\|r_n^J\|_{{\mathscr L}^{2:\infty}_{q}( \tau_n )} \Big) = 0\,.
$$
Let us write the equation satisfied by~$r_n^J$.  It turns out to be more convenient
to write that equation after a re-scaling in space-time. For
convenience and similarly to~(\ref{defJ0})-(\ref{reorder}), let us also re-order the functions~$ \Lambda_{j,n} U_j $, for~$1
\leq j \leq J_0$, in such a way that, for some $n_0 = n_0(J_0)$ sufficiently large, we have
\begin{equation}
\label{eq:increasTglobal} \forall n\geq n_0 \, ,\qquad  1 \leq j\leq j' \leq J_0 \quad \Longrightarrow \quad \lambda
_{j,n} ^2 T_j^* \: \: \leq \: \: \lambda_{j',n} ^2 T_{j'}^*
\end{equation} (some of these terms may equal infinity), where recall that~$ T_j^*$ is the maximal life span
of~$\phi_j$ (such a reordering is possible on a fixed and finite number of profiles
due to the orthogonality of scales). In particular, with this ordering we have $\tau_n = \l_{1,n}^2 T_1$.
We then define, for every integer~$J$,
$$
\begin{aligned}
\forall 1 \leq  j \leq J, \quad U_n^{j,1}  :=    \Lambda^{-1}_{1,n}   \Lambda_{j,n} U_j, \quad
R_n^{J,1} :=    \Lambda^{-1}_{1,n} r_n^J \,,
\\
W_n^{J,1} :=    \Lambda^{-1}_{1,n} w_n^J \quad \mbox{and} \quad U_n^{1} :=   \Lambda^{-1}_{1,n} u_n \,.
\end{aligned}
$$
Clearly we have
$$
 R_n^{J,1}    = U_n^{1} - \big( \sum_{j = 1}^{J}
U_n^{j,1}   + W_n^{J,1}  \big)  \, ,
$$
and~$ R_n^{J,1}$ (which  for the time being is defined for times~$t$ in~$ [0,T_n^1)$ where~$T_n^1 := \min \{T_1, \l_{1,n}^{-2}T^*(u_{0,n}) \}$) is a divergence free vector field,  solving the following system (in a Duhamel sense similar to (\ref{NSduhamel})):
\begin{equation}\label{remaindereq} \displaystyle
\left\{\begin{array}{rcl} \displaystyle \partial_t R_n^{J,1} +
\mathbb{P} ( (R_n^{J,1} \cdot \nabla) R_n^{J,1} ) -  \Delta R_n^{J,1}
+ Q( R_n^{J,1},F_n^{J,1}) & = & G_n^{J,1} \\
R^{J,1}_{n |s=0} & = & 0 \, ,
\end{array} \right.
\end{equation}
where we recall that~$\mathbb{P}:= \mbox{Id}-\nabla \Delta^{-1}(\nabla\cdot)$ is the
projection onto divergence free vector fields, and where
$$
Q(a,b):=\mathbb{P}((a\cdot \nabla) b+(b\cdot \nabla) a)
$$ for two vector
fields $a,b$. Finally, we have defined
$$
F_n^{J,1} := \sum_{j =1 }^J U_n^{j,1}  + W_n^{J,1} \, ,
$$
and
$$
  G_n^{J,1} :=-\frac{1}{2}Q ( W_n^{J,1}, W_n^{J,1}) - \frac{1}{2}\! \! \! \!  \sumetage{j \neq j' }{(j, j')
\in \{1,..,J\}^2} Q( U_n^{j,1}  , U_n^{j',1})- \sum_{j =1 }^J Q(
U_n^{j,1}  , W_n^{J,1} )   \, .
$$
  In order to use perturbative
bounds on this system,  we need a
uniform control on the drift term $F_n^{J,1}$, and smallness of the forcing term
$G^{J,1}_n$.
The results are the following.
\begin{lemma} \label{lemmadrift}
Fix~$T_1<T_1^*$. For any real number~$a$ satisfying~$1-3/q< 1/a <1$ and any integer~$N \geq 2 $ such that~$3(N-1)\leq q$, there is~$C>0$ such that defining
$V:={\mathscr L}^q_q(T_1) \cap \big({\mathscr L}^{
a:\infty}_{q}(\infty) +   {\mathscr L}^{\frac qN}_{\frac qN}(T_1)\big)$,
we have
$$
\lim_{J \to \infty} \limsup_{n \to \infty} \|F_n^{J,1}\|_{V} \leq C \, .
$$
\end{lemma}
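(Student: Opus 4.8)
The plan is to bound, uniformly in $n$, the three families of terms in
$$F_n^{J,1}=U_1+\sum_{j=2}^{J}U_n^{j,1}+W_n^{J,1}$$
separately — the heat remainder $W_n^{J,1}$, the finitely many rescaled blow‑up profiles $U_n^{j,1}$ with $j\in I$, and the tail $U_n^{j,1}$ with $j\notin I$ — placing each into the appropriate component of $V$, and then to recombine the sum over $j$ by an almost‑orthogonality argument; throughout we use the re‑ordering~\eqref{eq:increasTglobal}, so that $\tau_n=\lambda_{1,n}^2 T_1$, $I\subseteq\{1,\dots,J_0\}$, and $1\in I$. Every critical $\mathscr{L}$‑norm on $(0,\infty)$ is invariant under $\Lambda_{1,n}^{-1}$, so $\|W_n^{J,1}\|_{\mathscr{L}^{a:\infty}_{q}(\infty)}=\|w_n^J\|_{\mathscr{L}^{a:\infty}_{q}(\infty)}\le\|w_n^J\|_{\mathscr{L}^{1:\infty}_{q}(\infty)}$ (as $1\le a\le\infty$), which goes to $0$ after $\limsup_n$ then $\lim_J$ by~\eqref{eq:heatw}; the same quantity dominates $\|W_n^{J,1}\|_{\mathscr{L}^{q}_{q}(T_1)}$, since $\mathscr{L}^{1:\infty}_{q}(\infty)\hookrightarrow\mathscr{L}^{q}_{q}(\infty)$ and the $\mathscr{L}^{q}_{q}$‑norm decreases under restriction of the time interval. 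For $j\ge J_0$ the datum $\phi_j$ lies below the small‑data threshold, so $U_j=NS(\phi_j)$ is global, and small‑data theory together with the Bernstein embedding $\dot B^{s_p}_{p,p}\hookrightarrow\dot B^{s_q}_{q,q}$ and~\eqref{ortho33} gives $\sum_{j\ge J_0}\|U_j\|_{\mathscr{L}^{1:\infty}_{q}(\infty)}^q\lesssim\sum_{j\ge J_0}\|\phi_j\|_{\dot B^{s_q}_{q,q}}^q<\infty$; scaling invariance transfers this to $\sum_{j\ge J_0}\|U_n^{j,1}\|_{\mathscr{L}^{1:\infty}_{q}(\infty)}^q$, uniformly in $n,J$.

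The heart of the matter is the finitely many blow‑up profiles, and it is here that the structure of $V$ and the hypotheses on $a$ and $N$ are used. For each $j\in I$ (including $j=1$, where $U_n^{1,1}=U_1$) I would apply the elementary decomposition of Lemma~\ref{decompositionNS} to write, on $[0,T_j]$ with $T_j<T_j^*$ fixed,
$$U_j=e^{t\Delta}\phi_j+\rho_j\,,\qquad e^{t\Delta}\phi_j\in\mathscr{L}^{a:\infty}_{q}(\infty)\,,\qquad \rho_j\in\mathscr{L}^{q/N}_{q/N}(T_j)\,,$$
the first membership coming from the heat–Besov equivalence~\eqref{besequiv} and $\dot B^{s_p}_{p,p}\hookrightarrow\dot B^{s_q}_{q,q}$ (so $\|e^{t\Delta}\phi_j\|_{\mathscr{L}^{a:\infty}_{q}(\infty)}\lesssim\|\phi_j\|_{\dot B^{s_q}_{q,q}}$), the second from the improved regularity of $\rho_j=U_j-e^{t\Delta}\phi_j$ furnished by the iterations of Sections~\ref{iterationsection}--\ref{invertibilitysection} — the constraint $3(N-1)\le q$ being exactly what lets that iteration reach the integrability index $q/N$, while $1-3/q<1/a<1$ is the admissibility condition making $\mathscr{L}^{a:\infty}_{q}(\infty)$ a legitimate drift component for the perturbation scheme of Appendix~\ref{perturbationtheoryNS}. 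The key point is then that the rescaled functions $U_n^{j,1}=\Lambda_{1,n}^{-1}\Lambda_{j,n}U_j$, $j\in I$, are all \emph{simultaneously} defined on $[0,T_1]$: their lifespan is $(\lambda_{j,n}/\lambda_{1,n})^2 T_j^*>(\lambda_{j,n}/\lambda_{1,n})^2 T_j\ge T_1$, because $\tau_n=\lambda_{1,n}^2 T_1=\min_{k\in I}\lambda_{k,n}^2 T_k\le\lambda_{j,n}^2 T_j$. Transporting the splitting through the scaling $\Lambda_{1,n}^{-1}\Lambda_{j,n}$ (an isometry for the scale‑invariant $\mathscr{L}$‑norms, heat flow commuting with it) and restricting to $[0,T_1]$ then shows that each of the finitely many $U_n^{j,1}$, $j\in I$, splits on $(0,T_1)$ as an $\mathscr{L}^{a:\infty}_{q}(\infty)$‑piece plus an $\mathscr{L}^{q/N}_{q/N}(T_1)$‑piece, with norms bounded uniformly in $n$ by constants depending only on $\phi_j$ and $T_j$; the corresponding bound in the $\mathscr{L}^{q}_{q}(T_1)$‑factor follows more easily from $\mathscr{L}^{1:\infty}_{p}(T_j)\hookrightarrow\mathscr{L}^{q}_{q}(T_j)$ and scaling invariance together with $[0,T_1]\subseteq[0,(\lambda_{j,n}/\lambda_{1,n})^2 T_j]$.

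It remains to recombine. For each fixed $J$ one groups the $\mathscr{L}^{a:\infty}_{q}(\infty)$‑pieces (from $W_n^{J,1}$, from $\sum_{j>J_0}U_n^{j,1}$, and from the heat parts of the blow‑up profiles) into a function $g_n^J$ and the $\mathscr{L}^{q/N}_{q/N}(T_1)$‑pieces (the rescaled remainders $\rho_j$, $j\in I$) into a function $h_n^J$, so that $F_n^{J,1}=g_n^J+h_n^J$ with $g_n^J\in\mathscr{L}^{a:\infty}_{q}(\infty)$, $h_n^J\in\mathscr{L}^{q/N}_{q/N}(T_1)$, and one estimates $\|g_n^J\|_{\mathscr{L}^{a:\infty}_{q}(\infty)}$, $\|h_n^J\|_{\mathscr{L}^{q/N}_{q/N}(T_1)}$ and $\|F_n^{J,1}\|_{\mathscr{L}^{q}_{q}(T_1)}$. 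The main obstacle is the almost‑orthogonality required to pass, in each of these mixed (Chemin–Lerner, and low‑integrability) norms $\mathscr{L}^\sharp$ with associated exponent $m$, from
$$\Big\|\sum_{j}U_n^{j,1}\Big\|_{\mathscr{L}^\sharp}^{m}\qquad\text{to}\qquad \sum_{j}\big\|U_n^{j,1}\big\|_{\mathscr{L}^\sharp}^{m}+\varepsilon(J,n)\,,\qquad \varepsilon(J,n)\to0\ \ (n\to\infty,\ J\ \text{fixed})\,;$$
this rests on the fact that a common rescaling by $\lambda_{1,n}^{-1}$ preserves the pairwise orthogonality~\eqref{orthseq} of $(\lambda_{j,n},x_{j,n})$, so that the interactions between distinct frequency‑localized pieces $\Delta_k U_n^{j,1}$ vanish as $n\to\infty$ (by separation in space or in frequency), and it is carried out as in the proof of~\cite[Lemma~3.6]{GKP} (compare~\eqref{orthogonalitybesovprofiles}), the low‑integrability exponent $q/N$ — which satisfies $q/N>1$ thanks to $3(N-1)\le q$ — demanding slightly more care in the time integration. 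Since, by the previous steps, $\sum_{j\in I}\big(\|e^{t\Delta}\phi_j\|_{\mathscr{L}^{a:\infty}_{q}(\infty)}^q+\|\rho_j\|_{\mathscr{L}^{q/N}_{q/N}(T_j)}^{q/N}+\|U_j\|_{\mathscr{L}^{1:\infty}_{q}(T_j)}^q\big)<\infty$ and $\sum_{j>J_0}\|U_j\|_{\mathscr{L}^{1:\infty}_{q}(\infty)}^q<\infty$, taking $\limsup_n$ then $\lim_J$ and using that $W_n^{J,1}$ is negligible yields $\lim_{J\to\infty}\limsup_{n\to\infty}\|F_n^{J,1}\|_V\le C$, with $C$ depending only on $a,N,q,p,T_1$ and the profiles $(\phi_j)$.
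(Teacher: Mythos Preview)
Your overall structure is sensible, but the argument diverges from the paper's in a way that leaves a real gap.

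\medskip
\textbf{The decomposition for $N\ge 3$.} You write $U_j=e^{t\Delta}\phi_j+\rho_j$ with $\rho_j\in\mathscr{L}^{q/N}_{q/N}(T_j)$, invoking ``the iterations of Sections~\ref{iterationsection}--\ref{invertibilitysection}''. But $\rho_j=B(U_j,U_j)$ lies a priori only in $\mathscr{L}^{1:\infty}_{q/2}(T_j)$; reaching integrability $q/N$ for $N\ge 3$ requires the higher-order expansion of Lemma~\ref{decompositionNS}, whose output is $U_j=H_{N,j}+Z_{N,j}$ with $H_{N,j}=\mathscr{B}_{N-1}(e^{t\Delta}\phi_j)$ a genuinely nonlinear object in the heat flow, not the heat flow alone. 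For the finitely many $j\in I$ this is a cosmetic fix (replace $e^{t\Delta}\phi_j$ by $H_{N,j}$), but it matters for what follows.

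\medskip
\textbf{Almost-orthogonality in $\mathscr{L}^{a:\infty}_q(\infty)$.} The crux of the argument is your control of $\sum_{j\notin I,\,j\le J}U_n^{j,1}$ in the $\mathscr{L}^{a:\infty}_q(\infty)$ component, uniformly in $J$. You have $\ell^q$-summability of the individual norms and you invoke an almost-orthogonality of type \cite[Lemma~3.6]{GKP}. However, that lemma is stated and proved for diagonal spaces $\mathscr{L}^{r}_{r}$ (all indices equal), where expanding an $r$-th power makes the cross terms manifest. The space $\mathscr{L}^{a:\infty}_q$ involves an $\tl^\infty$ time endpoint and three distinct exponents; the orthogonality you assert there is neither in \cite{GKP} nor obvious, and in particular the $L^\infty_t$ piece does not yield to an ``expand-the-power'' argument. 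This is the step that fails as written.

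\medskip
\textbf{How the paper circumvents this.} The paper decomposes \emph{every} profile (not just those in $I$) as $U_j=H_{N,j}+Z_{N,j}$, and then uses the multilinear structure $H_{N,j}=\sum_{\ell\le N-1}B_\ell\big((e^{t\Delta}\phi_j)^{\otimes\ell}\big)$ together with the commutation $\Lambda_{j,n}B_\ell=B_\ell\Lambda_{j,n}$ to \emph{resum}:
\[
\sum_{j\le J}\Lambda_{j,n}H_{N,j}=\sum_{\ell\le N-1}B_\ell\Big(\big(e^{t\Delta}\!\sum_{j\le J}\Lambda_{j,n}\phi_j\big)^{\otimes\ell}\Big)+\text{(cross terms)}
=\sum_{\ell\le N-1}B_\ell\big((e^{t\Delta}(u_{0,n}-\psi_n^J))^{\otimes\ell}\big)+H^{(2)}_{n,N}\,,
\]
where the first term is bounded in $\mathscr{L}^{1:\infty}_q(\infty)$ by a function of $\|u_{0,n}\|_{\dot B^{s_q}_{q,q}}$ (which is uniformly bounded) and the cross terms $H^{(2)}_{n,N}$ vanish in $\mathscr{L}^{a:\infty}_q(\infty)$ by Lemma~\ref{decompositionNS}(3). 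No orthogonality in $\mathscr{L}^{a:\infty}_q$ is ever needed: the boundedness of the \emph{original sequence} $u_{0,n}$ does the work. Almost-orthogonality \`a la \cite[Lemma~3.6]{GKP} is then used only for the $Z_{N,j}$ pieces, in the diagonal space $\mathscr{L}^{q/N}_{q/N}$, where it is legitimate. This resummation trick is the idea your proposal is missing.
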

\begin{lemma}\label{lemmasource}
Fix~$T_1<T_1^*$, and~$N \geq 3$ be an integer such that~$3(N-1)< q$.  For~$\delta\in (0,1)$, define~$1/r = N/q + (1-\delta)/2$ and let~$q'$ be the conjugate exponent to~$q$:~$1/q+1/q' = 1$.

\noindent
The source term~$G_n^{J,1}$ goes to zero for each~$J \in \N$, as~$n$ goes to infinity, in the space~$\tl^{q'}([0,T_1];\dot B^{s_q -  \frac2 {q}}_{q,q}) + \tl^{r}([0,T_1];\dot B^{s_q -1-\delta +\frac{2N}q}_{q,q})$.
\end{lemma}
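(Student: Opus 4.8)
\textbf{Proof plan for Lemma~\ref{lemmasource}.}
The plan is to exploit the bilinear structure of $G_n^{J,1}$ and treat its three types of terms separately. For fixed $J$, $G_n^{J,1}$ is a finite linear combination of expressions $Q(a_n,b_n)$ with $a_n,b_n$ ranging over $\{U_n^{1,1},\dots,U_n^{J,1},W_n^{J,1}\}$, the diagonal self-interactions $Q(U_n^{j,1},U_n^{j,1})$ being absent since they were cancelled by the equations for the individual profiles. Since the target space $Y:=\tl^{q'}([0,T_1];\dot B^{s_q-\frac2q}_{q,q})+\tl^{r}([0,T_1];\dot B^{s_q-1-\delta+\frac{2N}q}_{q,q})$ is a Banach space, it suffices to show that each such bilinear term tends to zero in $Y$. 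First I would record the bilinear bound underlying everything: combining the product and paraproduct estimates of Appendix~\ref{paraproductsapp} with Bernstein's inequalities, $Q$ maps $V\times V$ continuously into $Y$, where $V={\mathscr L}^q_q(T_1)\cap\big({\mathscr L}^{a:\infty}_q(\infty)+{\mathscr L}^{q/N}_{q/N}(T_1)\big)$ is the drift space of Lemma~\ref{lemmadrift}; the two summands of $Y$ are forcing--critical at time integrabilities $q'$ and $r$ respectively, and arise precisely from pairing a factor lying in the ${\mathscr L}^{a:\infty}_q$ component of $V$ with one in the ${\mathscr L}^{q/N}_{q/N}$ component (the latter being available, for $p=3\cdot2^k-2$ large, thanks to the high-regularity iterated bounds of Section~\ref{iterationsection}). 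Finally, by the local Cauchy theory together with the reordering~\eqref{eq:increasTglobal}, for fixed $J$ and $n$ large the rescaled profiles $U_n^{j,1}$ are bounded in $V$ uniformly in $n$, and $W_n^{J,1}=\Lambda_{1,n}^{-1}e^{t\Delta}\psi_n^J$ is bounded in $V$ via the heat estimate~\eqref{dataheatest} and the uniform bound on $\|\psi_n^J\|_{\dot B^{s_q}_{q,q}}$ from~\eqref{orth2a}. Thus every $Q(a_n,b_n)$ is bounded in $Y$, and only the vanishing as $n\to\infty$ remains.

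Next I would dispose of the cross terms between distinct profiles $Q(U_n^{j,1},U_n^{j',1})$, $j\ne j'$, by the standard orthogonality (``decoupling'') argument of~\cite{GKP}. By density --- the profile flows are smooth on $\rt\times(0,T_j^*)$ and lie in ${\mathscr L}^{1:\infty}_p$ --- one approximates $U_j$ and $U_{j'}$ in the $V$-norm by fields $\widetilde U_j,\widetilde U_{j'}$ that are smooth and compactly supported in $\rt\times(0,\infty)$. Then $\Lambda_{j,n}\widetilde U_j$ and $\Lambda_{j',n}\widetilde U_{j'}$ have compact space-time supports which become disjoint once $n$ is large: this is exactly the content of the orthogonality~\eqref{orthseq} of $(\lambda_{j,n},x_{j,n})$ and $(\lambda_{j',n},x_{j',n})$ (a short case analysis according to whether the scales or the cores separate). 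Since $Q$ involves only the pointwise products $a\otimes b$ and $b\otimes a$ before the (linear, translation- and dilation-commuting) operators $\nabla\cdot$ and $\mathbb{P}$ are applied, $Q(\Lambda_{j,n}\widetilde U_j,\Lambda_{j',n}\widetilde U_{j'})$ vanishes identically for $n$ large, while the error of the approximation is $\lesssim\varepsilon$ by the bilinear bound. Letting $n\to\infty$ and then $\varepsilon\to0$ gives $\|Q(U_n^{j,1},U_n^{j',1})\|_Y\to0$ for each fixed $J$.

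For the profile--remainder terms $Q(U_n^{j,1},W_n^{J,1})$ I would pass to the frame of the $j$-th profile and use weak convergence of the remainder: by scale invariance of $Y$ these are controlled by $Q(U_j,\Lambda_{j,n}^{-1}w_n^J)$ with $\Lambda_{j,n}^{-1}w_n^J=e^{t\Delta}(\Lambda_{j,n}^{-1}\psi_n^J)$, and $\Lambda_{j,n}^{-1}\psi_n^J\rightharpoonup0$ in $\dot B^{s_p}_{p,p}$ for $J\ge j$ --- a standard by-product of the construction in Theorem~\ref{thm:dataprofb}, since $\Lambda_{j,n}^{-1}$ applied to~\eqref{profilesa} sends the $j$-th profile to $\phi_j$ and all the others, as well as the weak limit of the data, to their orthogonal ``null'' counterparts. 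Hence $e^{t\Delta}(\Lambda_{j,n}^{-1}\psi_n^J)\to0$ pointwise on $\rt\times(0,\infty)$ while staying bounded in the heat-flow norms; approximating $U_j$ again by a compactly supported $\widetilde U_j$ and combining dominated convergence on its fixed compact support with the uniform bilinear control of the tails yields $\|Q(U_n^{j,1},W_n^{J,1})\|_Y\to0$ for each fixed $J$. The remaining self-interaction $Q(W_n^{J,1},W_n^{J,1})$ is estimated directly: the bilinear bound and the heat estimate give $\|Q(W_n^{J,1},W_n^{J,1})\|_Y\lesssim\|\psi_n^J\|_{\dot B^{s_q}_{q,q}}^2$, and this is the one contribution whose smallness is not produced by the limit $n\to\infty$ alone but is inherited from the remainder property~\eqref{orth2a} upon the subsequent limit $J\to\infty$ which always accompanies an application of this lemma in the perturbation argument.

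I expect the main obstacles to be two. First, verifying the bilinear continuity $Q:V\times V\to Y$ with exactly these exponents: this is a bookkeeping-heavy computation whose delicate feature is the unavoidable use of the low-time-integrability, high-spatial-regularity component ${\mathscr L}^{q/N}_{q/N}$ supplied by the iteration of Section~\ref{iterationsection}, which is precisely what forces the second summand $\tl^{r}([0,T_1];\dot B^{s_q-1-\delta+\frac{2N}q}_{q,q})$ to appear in $Y$, and which must be balanced against the constraints $1-3/q<1/a<1$ and $3(N-1)<q$ inherited from Lemma~\ref{lemmadrift}. Second, running the decoupling for the cross terms in the Chemin--Lerner scale $\tl^\rho(\dot B^s_{q,q})$ rather than in the simpler mixed Lebesgue setting, where one must ensure that the density approximation is performed in a norm strong enough to absorb the derivative and the nonlocal projection $\mathbb{P}$ hidden inside $Q$.
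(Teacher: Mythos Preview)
Your overall structure matches the paper's for two of the three term types: the self-interaction $Q(W_n^{J,1},W_n^{J,1})$ is handled by direct product estimates (the paper cites this from \cite{GKP}), and the cross-profile terms $Q(U_n^{j,1},U_n^{j',1})$ are handled by orthogonality exactly as you describe (the paper refers to \cite{GKP} and to \eqref{estimateB2}, whose proof is precisely the density-plus-disjoint-supports argument you outline).

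The genuine difference is in the profile--remainder terms $\sum_j Q(U_n^{j,1},W_n^{J,1})$. You propose to treat each $j$ separately, pass to the $j$-th frame, and use the weak convergence $\Lambda_{j,n}^{-1}\psi_n^J\rightharpoonup 0$ together with dominated convergence on a compactly supported approximation of $U_j$; this would give vanishing as $n\to\infty$ for fixed $J$. The paper does \emph{not} do this. Instead it sums all the profiles into the drift $F_n^{J,1}$, applies the paraproduct decomposition to $F_n^{J,1}\otimes W_n^{J,1}$, and estimates each piece: $\mathcal{T}_{F}W$ and $\mathcal{R}(F,W)$ land in the $\tl^{q'}$ summand of $Y$ using only $F\in\mathscr{L}^q_q(T_1)$, while $\mathcal{T}_{W}F$ is the new difficult term requiring the full strength of Lemma~\ref{lemmadrift}, namely the splitting $F=F^{1,1}+F^{1,2}$ with $F^{1,1}\in\mathscr{L}^{q/N}_{q/N}(T_1)$ and $F^{1,2}\in\mathscr{L}^{a:\infty}_q(\infty)$. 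In every case the smallness comes from $\|W_n^{J,1}\|_{\mathscr{L}^\rho_q}\to 0$ as $J\to\infty$, not from $n\to\infty$; what the paper actually proves is the double limit $\lim_J\limsup_n=0$, consistent with how the lemma is used in Proposition~\ref{pdr1} and with your own remark about $Q(W,W)$.

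Two smaller corrections. First, your description of how the $\tl^r$ summand arises is off: it does not come from pairing the two components of $V$ against each other, but from $\mathcal{T}_W F^{1,1}$ with $F^{1,1}\in\mathscr{L}^{q/N}_{q/N}$ and $W$ placed (as a heat flow) in $\mathscr{L}^{r_2}_q$ with $1/r_2=(1-\delta)/2$; the $\delta>0$ is precisely what makes $W$ sit at strictly negative regularity $-\delta$ in $\dot B^{\cdot}_{\infty,\infty}$ as required by the paraproduct rule. Second, the symmetric bound $Q:V\times V\to Y$ you assert is stronger than what the paper proves or needs: the estimates are asymmetric, with one factor in $V$ and the other a heat flow (hence in every $\mathscr{L}^\rho_q$). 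For the cross-profile terms neither $V$ nor Lemma~\ref{lemmadrift} is invoked; density in $\mathscr{L}^{1:\infty^-}_p$ suffices and the output lands in the $\tl^{q'}$ summand alone.

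Your weak-convergence route for the profile--remainder terms is plausible and would yield the slightly stronger conclusion stated in the lemma, but carrying it out rigorously in the Chemin--Lerner scale is more delicate than you indicate: the tail estimate $\|Q(U_j-\widetilde U_j,W)\|_Y\lesssim\varepsilon$ still forces you, for the $\mathcal{T}_W(\cdot)$ piece, through the decomposition $U_j=H_{N,j}+Z_{N,j}$ of Lemma~\ref{decompositionNS} and hence through the same $\mathscr{L}^{q/N}_{q/N}$ machinery. The paper's route bypasses the weak-convergence step entirely by trading it for the uniform drift bound of Lemma~\ref{lemmadrift}.
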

\noindent
Assuming these lemmas to be true,   the end of the proof of the theorem is a direct consequence of Proposition~\ref{pdr1}. \hfill $\Box$
\\\\
So   let us prove Lemmas~\ref{lemmadrift} and~\ref{lemmasource}.
\\\\
{ \bf Proof of Lemma \ref{lemmadrift}:} \quad
This Lemma improves on~\cite[Lemma 3.5]{GKP}, thanks to Lemma~\ref{decompositionNS} below.
We first note that the uniform bound in~$  {\mathscr L}^q_q(T_1)$ is due to~\cite[Lemma 3.5]{GKP}.  Next  due to~(\ref{eq:heatw}) and scale invariance, we know that
$$
  \lim_{J\rightarrow \infty} \limsup_{n \rightarrow \infty}
\|W_n^{J,1}\|_{{\mathscr L}^{1:\infty}_{q}(\infty)}  =0 \, .
$$
\smallskip
\noindent Now let us turn to~$\displaystyle \sum_{j \leq J } U_n^{j,1}  $.  We will need the following rather elementary decomposition result for any solution to~({\rm NS}), whose proof we postpone until the next section:
  \begin{lemma}\label{decompositionNS}
Let~$u_0 \in \dot B^{s_p}_{p,p}$ be given, with~$3< p  < \infty$,
and let~$u:={\rm NS}(u_0)$ belong to~$ {\mathscr L}^{1:
\infty}_{p}(T)$ for some~$T>0$.
For any integer~$N\geq 2$ such that~$ 3(N-1) < p$, there are~$H_N \in {\mathscr L}^{1:
\infty}_{p}(\infty)$ and~$Z_N \in
 {\mathscr L}^{\frac pN}_{\frac pN}(T)
$ such that~$u = H_N + Z_N$.
\\\\
Moreover, we have
$
H_N = {\mathscr{B}}_{N-1}(u_L)$, where~$u_L:=e^{t \Delta}u_0$ and~${\mathscr{B}}_{N}$ is a  finite sum of multilinear operators of order   at most~$N$, independent of~$u_0$ and~$p$.
\\\\
Finally the following results hold.
\begin{enumerate}
\item  If~$\|u_0\|_{\dot B^{s_p}_{p,p}} \leq c_0$, the small constant
  which guarantees $T^*(u_0)=\infty$, then
 $$
 \|Z_N\|_{ {\mathscr L}^\frac pN_{\frac pN}(\infty)} \lesssim \|u_0\|_{\dot B^{s_p}_{p,p}} ^N \, .
 $$

\item For any scaling operator~$  \Lambda_{j,n}$ as in (\ref{defLambdatilde}), the commutation property holds:
$$
  \Lambda_{j,n} {\mathscr{B}}_N (\cdot) = {\mathscr{B}}_N (  \Lambda_{j,n} \cdot)  \, .
$$

\item Writing~${\mathscr{B}}_{N}$ as the finite sum of~$\ell$-linear operators~$(B_\ell)_{1 \leq \ell \leq N}$, then
  for any~$\ell \geq 2$ and for any set~$(U_j)_{1 \leq j \leq \ell}$ in~${\mathscr L}^{1:\infty}_{p}(\infty)$,
  \begin{equation}\label{estimateB1}
  \big \|  {  B}_{\ell}\big(  U_1, \dots,   U_\ell  \big)\big \| _{{\mathscr L}^{1:\infty}_{p}(\infty)} \lesssim
  \prod_{j=1}^\ell \|U_j\|_{{\mathscr L}^{1:\infty}_{p}(\infty)}  \, .
  \end{equation}
  Moreover for any~$a$ such that~$ 1-d/p <1/a <1$,  we have
 \begin{equation}\label{estimateB2}
\begin{aligned}
\big \|  {  B}_{\ell}\big(   \Lambda_{j_1,n} U_1, \dots,     \Lambda_{j_\ell,n} U_\ell  \big)\big \| _{{\mathscr L}^{a:\infty}_{p}(\infty)} \to 0 \, , \quad n \to \infty \, ,
\end{aligned}
  \end{equation}
provided there exists $k\neq k'$  with $1\leq k, k' \leq \ell$ such that~$ (\lambda_{j_k,n} , x_{j_k,n} ) \perp  (\lambda_{j_{k'},n} , x_{j_{k'},n} )$.
 \end{enumerate}

\end{lemma}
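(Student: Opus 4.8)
The plan is to construct $H_N$ and $Z_N$ by iterating the Duhamel formula \eqref{NSduhamel}. Write $u_L := e^{t\Delta}u_0$ and recall that the solution $u = {\rm NS}(u_0)$ satisfies $u = u_L + B(u,u)$ with the bilinear operator $B$ from \eqref{NSduhamel}. Substituting this identity into itself repeatedly, one obtains after $N-1$ steps an expansion of the form $u = \sum_{\ell=1}^{N-1} B_\ell(u_L,\dots,u_L) + (\text{terms each containing at least } N \text{ factors of } u)$, where $B_1(u_L) = u_L$ and each $B_\ell$ is an $\ell$-linear operator built out of iterated compositions of $B$ (and hence of $e^{(t-s)\Delta}\mathbb{P}\nabla\cdot$), independent of $u_0$ and of $p$. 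This naturally suggests setting $H_N := \mathscr{B}_{N-1}(u_L) := \sum_{\ell=1}^{N-1} B_\ell(u_L,\dots,u_L)$ and letting $Z_N := u - H_N$ collect the remaining terms, each of which is $N$-linear in $u$ and $u_L$ with at least one "true" factor $u$ (so that the iteration genuinely lowers the regularity demand). The commutation property (2) is then immediate: $e^{(t-s)\Delta}\mathbb{P}\nabla\cdot$ commutes with the Navier-Stokes scaling $u\mapsto u_\lambda$, so each $B_\ell$ does too, and the scaling operators $\Lambda_{j,n}$ are compositions of such rescalings with translations, under which $\mathbb{P}$ and the heat flow are also equivariant.

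The heart of the matter is the space-time estimates. For $H_N$: each $B_\ell$ is a finite iterated composition of the operator $f,g \mapsto \int_0^t e^{(t-s)\Delta}\mathbb{P}\nabla\cdot(f\otimes g)\,ds$, and one estimates it by the standard heat/product bounds collected in Appendix \ref{paraproductsapp}. Since $u_L = e^{t\Delta}u_0$ with $u_0 \in \dot B^{s_p}_{p,p}$, one has $u_L \in \mathscr{L}^{1:\infty}_p(\infty)$ by the linear heat estimate \eqref{dataheatest}; the multilinear bound \eqref{estimateB1} then follows by applying the bilinear Navier-Stokes estimate in $\mathscr{L}^{1:\infty}_p(\infty)$ once for each composition, which gives $H_N = \mathscr{B}_{N-1}(u_L) \in \mathscr{L}^{1:\infty}_p(\infty)$. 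Item (1) is the same computation keeping track of the homogeneity: each application of the bilinear operator contributes one power of $\|u_0\|_{\dot B^{s_p}_{p,p}}$, and in the small-data regime $u_L$ is controlled globally, so each $\ell$-linear term is $\lesssim \|u_0\|^\ell$; in $Z_N$ the lowest-order term is $N$-linear, giving $\|Z_N\|_{\mathscr{L}^{p/N}_{p/N}(\infty)} \lesssim \|u_0\|^N$ (the index $p/N$ being exactly the scaling-critical Lebesgue-in-time exponent for an $N$-fold product, and the condition $3(N-1)<p$, i.e.\ $p/N > 3/(something)$ appropriately, ensures the relevant Besov indices stay in the admissible range for the product estimates). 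For $Z_N$ on the finite interval $(0,T)$: $u\in \mathscr{L}^{1:\infty}_p(T)$ by hypothesis, so each $N$-linear term lies in a space $\mathscr{L}^{p/N}_{p/N}(T)$ by Hölder in time together with the product estimates — here one must check that, although $u$ need not be globally controlled, on the fixed finite time interval all the required norms are finite, which again uses $3(N-1)<p$.

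Finally, \eqref{estimateB2} is an orthogonality/decoupling statement. Each $B_\ell$ is, up to the heat-flow convolutions, a multilinear expression in the $\Lambda_{j_k,n}U_k$; when two of the indices $j_k, j_{k'}$ have orthogonal scales/cores in the sense of \eqref{orthseq}, the corresponding pair of rescaled functions has vanishing "overlap" in the relevant space-time norm. I would prove this by the standard change-of-variables argument: reduce to the bottom-order bilinear building block, use a density argument to replace $U_k$ by smooth compactly supported functions, and then show directly that $\|B(\Lambda_{j_k,n}\varphi, \Lambda_{j_{k'},n}\psi)\|_{\mathscr{L}^{a:\infty}_p(\infty)} \to 0$ using the explicit kernel of $e^{(t-s)\Delta}\mathbb{P}\nabla\cdot$ and the divergence $|x_{j_k,n}-x_{j_{k'},n}|/\lambda_{j_k,n} + \lambda_{j_k,n}/\lambda_{j_{k'},n} + \lambda_{j_{k'},n}/\lambda_{j_k,n} \to \infty$; the general $\ell$-linear case follows by treating the other factors as harmless bounded multipliers (bounded in $\mathscr{L}^{1:\infty}_p(\infty)$) and peeling them off one at a time. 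The condition $1 - d/p < 1/a < 1$ is exactly what is needed for the endpoint heat-flow/product estimates in the space $\mathscr{L}^{a:\infty}_p$ to close. The main obstacle I anticipate is bookkeeping: writing the iterated Duhamel expansion cleanly enough that "$\mathscr{B}_{N-1}$ is a finite sum of multilinear operators of order at most $N-1$, independent of $u_0$ and $p$" is manifest, and organizing the remainder $Z_N$ so that every one of its (finitely many) terms genuinely carries $N$ factors and lands in $\mathscr{L}^{p/N}_{p/N}$ — the analytic estimates themselves are routine applications of Appendix \ref{paraproductsapp}, but the combinatorics of the expansion and verifying the index constraints $3(N-1)<p$ propagate correctly through every term is where care is required.
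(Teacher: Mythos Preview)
Your proposal is correct and follows essentially the same route as the paper: iterate the Duhamel identity $u=u_L+B_2(u,u)$ to split off the multilinear operators $B_\ell$ acting on $u_L$ alone, collect the rest into $Z_N$, and verify the space-time bounds via the heat/product estimates of Appendix~\ref{paraproductsapp}; the commutation~(2) from scaling invariance of $B_2$, the small-data bound~(1) from homogeneity, and the orthogonality~(\ref{estimateB2}) by density plus a case split on scales versus cores are all exactly what the paper does. One small correction in your bookkeeping: the terms in $Z_N$ are not all exactly $N$-linear but of total degree $\geq N$ (e.g.\ $Z_3$ contains the $4$-linear $B_2(B_2(u,u),B_2(u,u))$); the paper handles this by the inductive form~(\ref{writeZN}), and the higher-degree terms still land in $\mathscr{L}^{p/N}_{p/N}$ because one may estimate some of their inner $B_2(u,u)=u-u_L$ factors back in $\mathscr{L}^{1:\infty}_p$ rather than $\mathscr{L}^{1:\infty}_{p/2}$---this is the only place where your sketch is slightly imprecise, but it does not affect the argument.
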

\noindent
Continuing with the proof of Lemma~{\rm\ref{lemmadrift}}, for each~$j
\leq J$ we apply the decomposition provided in
Lemma~\ref{decompositionNS}: we write, with similar notation as in the
lemma, for any integer~$N $ in~$[2, \frac q3 +1]$,
$$
U_j = H_{N,j} + Z_{N,j} \,
$$
where~$H_{N,j}$ is the sum of a finite number of multilinear operators of order at most~$N-1$, acting on the vector field~$u_{L,j}:=e^{t \Delta}\phi_j$ only, while~$Z_{N,j} $ belongs to~${\mathscr L}^\frac qN_{\frac qN}( T_j)$. It follows that $$
U_n^{j,1}  =     \Lambda^{-1}_{1,n}   \Lambda_{j,n} ( H_{N,j} + Z_{N,j}) \, .
$$
so we can write
$$
 \sum_{j \leq J } U_n^{j,1}   = \sum_{j =1 }^J       \Lambda^{-1}_{1,n}   \Lambda_{j,n} ( H_{N,j} + Z_{N,j})   \,.
$$
Let us start with the study of~$\displaystyle \sum_{j =1 }^J       \Lambda^{-1}_{1,n}   \Lambda_{j,n} H_{N,j} $. For each fixed~$N$ we can write
$$
H_{N,j} = \sum_{\ell = 1}^{N-1}{  B}_{\ell}\big( (e^{t \Delta}\phi_j) ^{\otimes \ell}\big) \, ,
$$
where  as in the statement of Lemma~\ref{decompositionNS},  $ {  B}_{\ell}\big( a^{\otimes \ell}\big)$ denotes a generic~$\ell$-linear operator applied to~$a$. Moreover thanks to Lemma~\ref{decompositionNS} (2),
$$
\begin{aligned}
   \Lambda_{j,n}  H_{N,j} & =  \sum_{\ell = 1}^{N-1}{  B}_{\ell}\big( (   \Lambda_{j,n} e^{t \Delta}\phi_j) ^{\otimes \ell}\big) \\
 & =  \sum_{\ell = 1}^{N-1}{  B}_{\ell}\big( (  e^{t \Delta}   \Lambda_{j,n}\phi_j) ^{\otimes \ell}\big)
\end{aligned}
$$
by the scaling of the heat flow, so we can write $\displaystyle
\sum_{j =1 }^J    \Lambda_{j,n}  H_{N,j} =  H_{n,N}^{(1)} + H_{n,N}^{(2)}  \, ,
$
where
$$
\begin{aligned}
H_{n,N}^{(1)} & := \sum_{\ell = 1}^{N-1}
{  B}_{\ell}\big( (  \sum_{j =1 }^J  e^{t \Delta}   \Lambda_{j,n}\phi_j) ^{\otimes \ell}\big) \quad \mbox{and} \\
H_{n,N}^{(2)} &:= -\sum_{\ell = 2}^{N-1} \sumetage{\{j_1,\dots,j_\ell\} \in \{1,\dots , J\}}{ \exists k , k' , j_k \neq j_{k'} }{  B}_{\ell}\big(   e^{t \Delta}     \Lambda_{j_1,n}  \phi_j, \dots,  e^{t \Delta}     \Lambda_{j_\ell,n}  \phi_j )  \big) \, .
\end{aligned}
$$
Let us   estimate~$     \Lambda^{-1}_{1,n}H_{n,N}^{(1)}$: we notice that~$\displaystyle  \sum_{j =1 }^J  e^{t \Delta}   \Lambda_{j,n}\phi_j =   e^{t \Delta} \sum_{j =1 }^J  \Lambda_{j,n}\phi_j $ so
 $$
\begin{aligned}
\big \|      \Lambda^{-1}_{1,n}H_{n,N}^{(1)}\big\|_{{\mathscr L}^{1:\infty}_{q}(\infty)} & =
\big \|H_{n,N}^{(1)}\big\|_{{\mathscr L}^{1:\infty}_{q}(\infty)}
\\
& \lesssim
 \sum_{\ell = 1}^{N-1} \big\|  e^{t \Delta}\sum_{j =1 }^J \Lambda_{j,n}\phi_j \big\|_{{\mathscr L}^{1:\infty}_{q}(\infty)}^\ell \\
\end{aligned}
$$
by Lemma~\ref{decompositionNS}, hence by classical bounds on the heat flow we get
$$
\big \|      \Lambda^{-1}_{1,n}H_{n,N}^{(1)}\big\|_{{\mathscr L}^{1:\infty}_{q}(\infty)}\lesssim \sum_{\ell = 1}^{N-1} \big \| \sum_{j =1 }^J   \Lambda_{j,n}\phi_j \big\|_{\dot B^{s_q}_{q,q}}^\ell = \sum_{\ell = 1}^{N-1} \big \| u_{0,n}-\psi_n^J \big\|_{\dot B^{s_q}_{q,q}}^\ell \, .
$$
by~(\ref{profilesa}) with $f_n=u_{0,n}$.  Hence by (\ref{orth2a}) and our assumption on $(u_{0,n})$
  we find
$$
\big \|      \Lambda^{-1}_{1,n}H_{n,N}^{(1)}\big\|_{{\mathscr L}^{1:\infty}_{q}(\infty)} \leq C(N) \, .
$$
Since the term~$    \Lambda^{-1}_{1,n}H_{n,N}^{(2)}$ goes to zero in~${\mathscr L}^{a:\infty}_{q}(\infty)$ as~$n$ goes to infinity for fixed~$J$ thanks to Lemma~\ref{decompositionNS},
where~$a$ is any real number such that~$1-3/q < 1/a<1 $,
we infer   that
$$
\lim_{J \to \infty} \limsup_{n \to \infty}  \Big \| \sum_{j =1 }^J       \Lambda^{-1}_{1,n}
  \Lambda_{j,n}
H_{N,j}
\Big \|_{ {\mathscr L}^{a:\infty}_q(\infty)} \leq C(N) \, .
$$
\noindent
Finally we are left with the study of~$\displaystyle  \sum_{j =1 }^J       \Lambda^{-1}_{1,n}
  \Lambda_{j,n}
Z_{N,j}$.
We recall that thanks to~(\ref{ortho33}),
there is~$J_0 \in \N$ such that for all~$j \geq J_0$, $\|\phi_j\|_{\dot B^{s_q}_{q,q}} \leq c_0$. Then thanks to Lemma~\ref{decompositionNS} (1), we have
\begin{equation}\label{estimatejgeqj0}
\forall j \geq J_0 \, , \quad  \|Z_{N,j}\|_{ {\mathscr L}^\frac qN_{\frac qN}(\infty)} \lesssim \|\phi_j\|_{\dot B^{s_q}_{q,q}} ^N \, .
 \end{equation}
Now let us write,  for each~$J \geq J_0$,
 $$
\begin{aligned}
 \big \|\sum_{j=1}^J    \Lambda_{1,n}^{-1}     \Lambda_{j,n} Z_{N,j}\big\|_{ {\mathscr L}^\frac qN_{\frac qN}(T_1)}
   \leq   \big \|\sum_{j=1}^{J_0-1}      \Lambda_{1,n}^{-1}     \Lambda_{j,n} Z_{N,j}\big\|_{ {\mathscr L}^\frac qN_{\frac qN}(T_1)}   +  \big \|\sum_{j=J_0}^J       \Lambda_{1,n}^{-1}    \Lambda_{j,n} Z_{N,j}\big\|_{ {\mathscr L}^\frac qN_{\frac qN}(\infty)} \, .\end{aligned}
 $$
To control both terms on the right-hand side, we invoke~\cite[Lemma 3.6]{GKP}, according to which for any~$1 \leq J' \leq J$,
\begin{equation}\label{invoke6}
\begin{aligned}
\big \|\sum_{J'}^J   \Lambda_{1,n}^{-1}     \Lambda_{j_1,n} Z_{N,j}\big\|_{ {\mathscr L}^\frac qN_{\frac qN}(T_1)} ^\frac qN
=  \sum_{J'}^J  \|   \Lambda_{1,n}^{-1}     \Lambda_{j_1,n}   Z_{N,j}\|_{ {\mathscr L}^\frac qN_{\frac qN}(T_1)} ^\frac qN
+ \e (J,n) \\
  \mbox{where}\quad \forall J \in \N \, , \quad  \e (J,n)\to 0
 \, , \quad n \to \infty\, .
 \end{aligned}
\end{equation}
This gives on the one hand
$$
\begin{aligned}
 \big \|\sum_{j=1}^{J_0-1}    \Lambda_{1,n}^{-1}    \Lambda_{j,n} Z_{N,j}\big\|_{ {\mathscr L}^\frac qN_{\frac qN}(T_1)}^\frac q N & \leq \sum_{j=1}^{J_0-1} \|  \Lambda_{1,n}^{-1}
     \Lambda_{j,n}
 Z_{N,j}\|^\frac qN_{ {\mathscr L}^\frac qN_{\frac qN}(T_1)} + \e (J_0,n) \\
 & \leq   \sum_{j=1}^{J_0-1} \|   Z_{N,j}\|^\frac qN_{ {\mathscr L}^\frac qN_{\frac qN}(\lambda_{0,n}^2 \lambda_{j,n}^{-2} T_1)}   + \e (J_0,n)\\
 & \leq  \sum_{j=1}^{J_0-1} \|   Z_{N,j}\|^\frac qN_{ {\mathscr L}^\frac qN_{\frac qN}( T_j)}  + \e (J_0,n) \,,
 \end{aligned}
$$
the last line being due to~(\ref{eq:increasTglobal}). This implies that
$$
 \limsup_{n \to \infty} \big \|\sum_{j=1}^{J_0-1}    \Lambda_{1,n}^{-1}    \Lambda_{j,n} Z_{N,j}\big\|_{ {\mathscr L}^\frac qN_{\frac qN}(T_1)}^\frac q N <\infty \, .
$$
On the other hand we have, still thanks to~(\ref{invoke6}),
$$
\big \|\sum_{j=J_0}^{J}    \Lambda_{1,n}^{-1}     \Lambda_{j,n} Z_{N,j}\big\|_{ {\mathscr L}^\frac qN_{\frac qN}(\infty)}^ \frac q N \leq \sum_{j=J_0}^{J}  \| Z_{N,j}\|_{ {\mathscr L}^\frac qN_{\frac qN}(\infty)} ^\frac q N + \e (J,n) \, ,
$$
so by~(\ref{estimatejgeqj0}) we infer that
$$
 \big \|\sum_{j=J_0}^{J}    \Lambda_{1,n}^{-1}     \Lambda_{j,n} Z_{N,j}\big\|_{ {\mathscr L}^\frac qN_{\frac qN}(\infty)}^ \frac q N  \lesssim \sum_{j=J_0}^J\|\phi_j\|_{\dot B^{s_q}_{q,q}} ^q + \e (J,n) \, .
$$
Using~(\ref{ortho33}) we conclude that
$$
\lim_{J \to \infty} \limsup_{n \to \infty} \big \|\sum_{j=J_0}^{J}    \Lambda_{1,n}^{-1}     \Lambda_{j,n} Z_{N,j}\big\|_{ {\mathscr L}^\frac qN_{\frac qN}(\infty)}  <\infty  \, ,
$$
and this ends the proof of Lemma~\ref{lemmadrift}. \hfill $\Box$
\\\\
We now turn to the source term and prove  Lemma~\ref{lemmasource}.
\\\\
{\bf Proof of Lemma \ref{lemmasource}:} \quad  The proof of this result is
an improvement (thanks to Lemma~\ref{lemmadrift}) of the proof of the corresponding result in~\cite{GKP}, namely the proof of~\cite[Lemma 3.7]{GKP}. We shall therefore only detail the new arguments.
\\\\
On the one hand it is proved in~\cite{GKP}, thanks to elementary product laws, that
\begin{equation}\label{limQWW} \lim_{J \rightarrow \infty} \limsup_{n
\rightarrow \infty} \big \|Q ( W_n^{J,1}, W_n^{J,1}) \big\|_{\tl^{q'}(\R^+;\dot B^{s_q -  \frac2 {q}}_{q,q})} = 0 \, .
\end{equation}
The terms~$  Q( U_n^{j,1}  , U_n^{j',1})$ when~$j \neq j'$    are also estimated exactly as in~\cite{GKP} (see also~(\ref{estimateB2}) in this paper, which provides a more general result for~$\ell (\geq 2)$ profiles).
\\\\
Now let us consider the last term entering in the definition of~$G_n^{J,1}$, namely the term~$Q ( F_n^{J,1}, W_n^{J,1})$. Writing~$fg=\mathcal{T}_f g+\mathcal{T}_g f+\mathcal{R}(f,g)$
the paraproduct decomposition of the
product~$fg$, we have by product estimates (\ref{prodest}) and H\"older's inequality in time followed by Bernstein's inequalities (\ref{bernst}) that
$$
\big \| \mathcal{T}_{F_n^{J,1}} W_n^{J,1}\big \| _{\tl^{q'}([0,T_1];\dot B^{s_q -1+ \frac2 {q'}}_{q,q})}
\lesssim \|F_n^{J,1}\|_{\tl^{q}([0,T_1];\dot B^{s_q + \frac2 {q}}_{q,q})}
\|W_n^{J,1}\|_{\tl^{r_1}([0,T_1];\dot B^{s_q + \frac2 {r_1}}_{q,q})}
$$
with~$1/r_1+ 1/q = 1/q'$.
This holds because~$s_\infty + 2/q <0$.
\smallskip
\noindent Similarly, since $2s_q + 2/r_1 + 2/q = 4/q>0$,  (\ref{bernst}) followed by (\ref{prodest}) give
$$
\big \|\mathcal{R}(F_n^{J,1},W_n^{J,1})\big\|_{\tl^{q'}([0,T_1];\dot B^{s_q - 1+ \frac2 {q'}}_{q,q})}
 \lesssim\|F_n^{J,1}\|_{\tl^{q}([0,T_1];\dot B^{s_q + \frac2 {q}}_{q,q})}
\|W_n^{J,1}\|_{\tl^{r_1}([0,T_1];\dot B^{s_q + \frac2 {r_1}}_{q,q})} \, .
$$
It follows that
\begin{equation}\label{limTWU}
\begin{aligned} \lim_{J \rightarrow \infty} \limsup_{n
\rightarrow \infty}
\big \|\mathcal{T}_{F_n^{J,1}} W_n^{J,1} + \mathcal{R}(F_n^{J,1},W_n^{J,1})\big\|_{\tl^{q'}([0,T_1];\dot B^{s_q -1+ \frac2 {q'}}_{q,q})} =0 \, .
\end{aligned}
\end{equation}
In order to improve on~\cite[Lemma 3.7]{GKP}, the only term to study is~$\mathcal{T}_{ W_n^{J,1} }  F_n^{J,1}$. Thanks to Lemma~\ref{lemmadrift}   we know that one can write
$$
F_n^{J,1} = F_n^{J,1,1}+F_n^{J,1,2} \quad \mbox{with} \quad
$$
$$
\lim_{J \rightarrow \infty} \limsup_{n
\rightarrow \infty} \big(
 \|F_n^{J,1,1}\|_{  {\mathscr L}^{\frac qN}_{\frac qN}(T_1) } + \|F_n^{J,1,2}\|_{ {\mathscr L}^{a:\infty}_{q}(\infty) }  \big)
< \infty\, .
$$
Let us study~$F_n^{J,1,1}$. We define~$r_2$ by~$1/r_2 = (1-\delta)/2$, so that by paraproduct estimates (\ref{prodest})
 (thanks to the fact that~$\delta>0$) followed by (\ref{bernst}) to embed~$\dot B^{s_q + \frac2 {r_2}}_{q,q}$ into~$\dot B^{s_\infty + \frac2 {r_2}}_{\infty,q}$
  we get immediately
$$
\big \| \mathcal{T}_{ W_n^{J,1}} F_n^{J,1,1} \big \| _{\tl^{r}([0,T_1];\dot B^{s_q -1+  \frac2 {r}}_{q,q})}
\lesssim  \|F_n^{J,1,1}\|_{  {\mathscr L}^{\frac qN}_{\frac qN}(T_1) }
\|W_n^{J,1}\|_{\tl^{r_2}([0,T_1];\dot B^{s_q + \frac2 {r_2}}_{q,q})} \, .
$$
Similarly given~$\e>0$, define~$r_3$ by~$2/r_3 =1-\frac 3q-\e $ (so that $s_q+2/r_3 <0$). Then if~$r_4$ satisfies~$1/r_4 + 1/r_3 = 1/r$ (notice that~$1/r_4  < 1-3/q< 1/a$ ), by (\ref{bernst}) followed by (\ref{prodest}) we can estimate
 $$
 \begin{aligned}
\big \|  \mathcal{T}_{ W_n^{J,1}} F_n^{J,1,2}\big \| _{\tl^{r}([0,T_1];\dot B^{s_q -1+  \frac2 {r}}_{q,q})}&\lesssim  \|F_n^{J,1,2}\|_{  {\mathscr L}^{r_4}_{  q}(T_1) }
\|W_n^{J,1}\|_{\tl^{r_3}([0,T_1];\dot B^{s_q + \frac2 {r_3}}_{q,q})} \\
&\lesssim  \|F_n^{J,1,2}\|_{  {\mathscr L}^{a:\infty}_{  q}(T_1) }
\|W_n^{J,1}\|_{\tl^{r_3}([0,T_1];\dot B^{s_q + \frac2 {r_3}}_{q,q})} \, .
\end{aligned}
$$
Lemma~\ref{lemmasource} is proved.  \hfill $\Box$

\subsection{An elementary decomposition via
  iteration: proof of Lemma~\ref{decompositionNS}}\label{elemdecompsec}  \quad
The argument leading to the result in Lemma~\ref{decompositionNS} can
be found  in~\cite{gip3} (in turn inspired by \cite{Planchon}); we detail it here for the convenience of the reader. The idea is to expand the solution in Duhamel form
\begin{equation}\label{expansion}
u = u_L+ B_2(u,u)
\end{equation}
where~$u_L:=e^{t\Delta}u_0$ and
\begin{equation}\label{defB2}
B_2(u,v)(t)
:=- \frac12 \int_0^t e^{(t-t') \Delta} {\mathbb P} \mbox{div} \, (u \otimes v +v\otimes u ) (t') \, dt' \, .
\end{equation}
This gives the desired expansion when~$N=2$:
$$
u = H_2 + Z_2
$$
with
$$
H_2 := u_L \quad \mbox{and} \quad Z_2
:=B_2(u,u) \, .
$$
In particular~${\mathscr{B}}_1 \equiv \mbox{Id}$.
Classical estimates on the heat flow imply that~$H_2$ belongs to~${\mathscr L}^{1:
\infty}_{p}(\infty)$.
 Moreover   product laws in Besov spaces along with
 the same heat flow estimates
 imply that $$
\| a\otimes b \|_{ \tl^1_t(\dot B^{2s_p+2}_{\frac p2,\frac p2})}
\lesssim \|a\|_{{\mathscr L}^{1:
\infty}_{p}(\infty)}\|b\|_{{\mathscr L}^{1:
\infty}_{p}(\infty)} \, ,
$$
so
\begin{equation}\label{estimateZ2}
\|Z_2  \|_{ {\mathscr L}^{1:
\infty}_{\frac p2,\frac p2}(T)}\lesssim  \|u\|_{{\mathscr L}^{1:
\infty}_{p}(T)}^2 \, .
\end{equation}
Note that the fact that the bilinear term allows to pass from an~$L^p$ to an~$L^\frac p2$ integrability
is a key feature in the whole of this paper, and will actually be used also in the next section extensively.
Next  we plug   the expansion~(\ref{expansion}) of~$u$ into the term~$B_2(u,u) $, to find
$$
\begin{aligned}
u &  = u_L + B_2 (u ,u   )\\
&  = u_L + B_2\big(u_L + B_2(u,u),u_L + B_2(u,u) \big) \\
&  = u_L +B_2(u_L,u_L)+2  B_2\big(u_L ,  B_2(u,u) \big)  + B_2 \big(B_2(u,u) ,  B_2(u,u) \big) \, .
\end{aligned}
$$
This gives the expansion for~$N=3$:
$$
u = H_3+Z_3
\quad \mbox{with} \quad
 H_3 := H_2 +B_2(u_L,u_L)
$$
so~${\mathscr{B}}_2\equiv \mbox{Id} + B_2$,
and
\begin{equation}\label{formulaH3}
\begin{aligned} Z_3
& :=  2  B_2 \big(u_L ,  B_2(u,u) \big) + B_2 \big(B_2(u,u) ,  B_2(u,u) \big)  \\
& =:    B_3 (u,u,u_L) + B_4 (u,u,u,u) \, ,
\end{aligned}
\end{equation}
and the expected bounds follow again from product laws as soon as~$p/2 > 3$.
Iterating further, the formulas immediately get very long and complicated so let us now argue by induction: assume that
$$
u = H_N+Z_N
$$
with~$H_N$  the sum of a finite number of multilinear operators of
order at most~$N-1$, acting on~$u_L $ only, and where~$Z_N$ has the following property:  we assume
there is an integer~$K_N \geq 0$, and for all~$0 \leq k \leq K_N$
some~$(N+k)$-linear operators~$B^M_{N+k} $ (the parameter~$M \in
\{0,\dots , N+k\}$ measures the number of entries in which $u$, rather
than $u_L$, appears), such that~$Z_N$ may be written in the form
\begin{equation}\label{writeZN}
Z_N =
 \sum_{M=1}^{N} B^M_{N} (u^{\otimes M} , u_L^{\otimes (N-M)})
+\sum_{k=1}^{K_N} \sum_{M=0}^{N+k} B^M_{N+k} (u^{\otimes M} , u_L^{\otimes (N+k-M)})
\, ,
\end{equation}
where we have used the following convention:~$ B_{N}^M  (u ^{\otimes M}, v^{\otimes (N-M)}
)$ denotes an ~$N$-linear operator~$ B_{N}^M  $   applied to~$M $ copies of a function~$u$ and~$(N-M)$ copies of a function~$v$:
$$
 B_{N}^M   (\underbrace {u,\dots,u}_{\mbox{\footnotesize{$M$ terms}}}
\, ,\underbrace {v,\dots,v}_{\mbox{\footnotesize{$N\!\!-\!\!M$ terms}}}) = B_{N}^M   (u ^{\otimes M}, v^{\otimes (N-M)}
)¬†\, .
$$
This notation is equivocal since the operator~$B_N$ need not be symmetric, but it will suffice for our purposes.
So  let us prove that for any~$M \geq 1$ and any~$N \in \N$, one can further decompose
\begin{equation}\label{enoughtoprove}
 B_{N}^M  (u ^{\otimes M}, u_L^{\otimes (N-M)}) = B^M_{N}(u_L^{\otimes N}) + Z_{N+1}
\end{equation}
where~$Z_{N+1}$ may be written in the following way, similarly to~(\ref{writeZN}): there is an integer~$K_{N+1}\geq 0$ and
 for all~$0 \leq k \leq K_{N+1}$ and~$0 \leq M \leq N+1+k$, some~$N+1+k$-linear operators~$\widetilde  B^M_{N+1+k} $ such that
$$
Z_{N+1} =  \sum_{M=1}^{N} \widetilde B^M_{N+1} (u^{\otimes M}
,u_L^{\otimes(N+1-M)})+ \sum_{k=0}^{K_{N+1}} \sum_{M=0}^{N+k} \widetilde B^M_{N+1+k} (u^{\otimes M}
,u_L^{\otimes(N+1+k-M)}) \, .
$$
This will imply that~$H_{N} \equiv H_{N-1} + B_N $, with~$B_N (u_L) =  B^M_{N}(u_L^{\otimes N}) $ an ~$N$-linear operator in~$u_L$.
In order to prove~(\ref{enoughtoprove})  we just need to use~(\ref{expansion}) again: replacing~$u$ by~$u_L + B_2(u,u)$ in the argument of~$B_N^M$ in~(\ref{writeZN}) gives
$$
\begin{aligned}
 B_N ^M (u ^{\otimes M}, u_L^{\otimes (N-M)})
& =B_N  ^M\Big (\big(u_L + B_2(u,u) \big) ^{\otimes M}, u_L^{\otimes (N-M)} \Big) \\
& =  B_N ^M(u_L^{\otimes N}) + \sum_{\ell = 1}^M \widetilde B^M_{N+\ell} (u^{\otimes 2\ell}, u^{\otimes (N-\ell)})
 \end{aligned}
$$
which proves~(\ref{enoughtoprove}).
To conclude  the proof of the first part of the lemma it remains to prove that~$H_N \in {\mathscr L}^{1:
\infty}_{p}(\infty)$ and~$Z_N \in \cl^{p/N}_{p/N}(T)$,
which again follows from product laws as long as $p>3(N-1)$.

\medskip
\noindent The proof of results~(1) to~(3) follows from the above construction as follows:

\medskip
\noindent The first result follows from~(\ref{estimateZ2}): if~$\|u_0\|_{\dot B^{s_p}_{p,p}} \leq c_0$, then by small data theory we have that~$T = \infty$ and
$$
 \|u\|_{{\mathscr L}^{1:
\infty}_{p}(\infty)} \leq 2  \|u_0\|_{\dot B^{s_p}_{p,p}}  \, .
$$
So~(\ref{estimateZ2}) becomes
$$
\|Z_2  \|_{ {\mathscr L}^{1:
\infty}_{\frac p2,\frac p2}(\infty)}¬†\lesssim  \|u\|_{{\mathscr L}^{1:
\infty}_{p}(\infty)}^2  \lesssim \|u_0\|_{\dot B^{s_p}_{p,p}} ^2 \, .
$$
The argument is the same at each step of the construction of~$Z_N$, since
$$
\|u_L\|_{{\mathscr L}^{1:
\infty}_{p}(\infty)} \lesssim \|u_0\|_{\dot B^{s_p}_{p,p}} \, .
$$

\smallskip
\noindent
The second result follows from scale invariance of the Navier-Stokes equations, hence of~$B_2$ defined in~(\ref{defB2}), and the iterative construction of~${\mathscr{B}}_N$.

\medskip
\noindent
Let us prove the last result. We shall only prove the more difficult result~(\ref{estimateB2}), as~(\ref{estimateB1}) follows from the same estimates.
 We shall detail the argument for~${\mathscr{B}}_2$ and~${\mathscr{B}}_3$,  and then show how to
pursue the computation for higher orders.

\medskip
\noindent $ \bullet $ $ $   We recall that~${\mathscr{B}}_2 \equiv \mbox{Id} + B_2$ with~$B_2$ defined in~(\ref{defB2}), and heat flow estimates imply that
\begin{equation}\label{estimateHn21}
\| {  B}_{2}\big(   \Lambda_{j_1,n} U_1,  \Lambda_{j_2,n} U_2 \big) \| _{{\mathscr L}^{a:\infty}_{p}(\infty)}  \lesssim \|    \Lambda_{j_1,n} U_1 \otimes   \Lambda_{j_2,n} U_2  \|_{\tl^a(\R^+;\dot B^{-2+\frac 3p+\frac2a}_{p,p})}  \, .
\end{equation}
Notice that by density in~${\mathscr L}^{1:\infty^-}_{p}(\infty)$, where $\infty^-$ indicates any arbitrarily large but finite number, we can assume that~$U_1 $ and~$U_2$ are smooth and compactly supported in space-time.
More precisely: given~$\e>0$ one can find two compactly supported (in space and time) functions~$U_1 ^\e$ and~$U_2^\e$ such that
\begin{equation}\label{approxdensity}
\| U_1 ^\e - U_1 \|_{{\mathscr L}^{1:\infty^-}_{p}(\infty)}   + \| U_2 ^\e - U_2 \|_{{\mathscr L}^{1:\infty^-}_{p}(\infty)} \leq \e \, .
\end{equation}
Product rules (along with the scale invariance of the scaling operators) give  for integers~$j , j'\in \{1,2\}$
$$
 \|    \Lambda_{j_1,n} (U_j ^\e - U_j) \otimes   \Lambda_{j_2,n} U_{j'}  \|_{\tl^a(\R^+;\dot B^{-2+\frac 3p+\frac2a}_{p,p})}+ \|    \Lambda_{j_1,n} (U_j ^\e - U_j) \otimes   \Lambda_{j_2,n} (U_{j'}^\e - U_{j'})  \|_{\tl^a(\R^+;\dot B^{-2+\frac 3p+\frac2a}_{p,p})}  \lesssim \e  \,  ,
$$
so let us now concentrate on the study of
$$
U_{n}^\e :=     \Lambda_{j_1,n}  U_1^\e \otimes     \Lambda_{j_2,n}  U_2^\e \, .
$$
We shall start by proving that as~$n$ goes to infinity,
\begin{equation}\label{resultdifferentscales}
\lambda_{j_1,n} / \lambda_{j_2,n}  + \lambda_{j_2,n} / \lambda_{j_1,n} \to \infty \quad \Longrightarrow
 \| U_{n}^\e\|_{\tl^a(\R^+;\dot B^{-2+\frac 3p+\frac2a}_{p,p})} \to 0  \, .
\end{equation}
Product laws and embeddings
give
\begin{equation}\label{estimateHn22}
\begin{aligned}
  \| U_{n}^\e  \|_{\tl^a(\R^+;\dot B^{-2+\frac 3p+\frac2a}_{p,p})}  \lesssim \|     \Lambda_{j_1,n}   U_1^\e \|_{\tl^{\tilde a}(\R^+;\dot B^{s}_{p,p})}
 \|      \Lambda_{j_2,n}   U_2^\e \|_{\tl^{\tilde a'}(\R^+;\dot B^{s }_{p,p})}
 \end{aligned}
\end{equation}
for any $\tilde a \geq a$ with $\frac 1{\tilde a} + \frac 1{\tilde a'} =1$ and $s=-1+3/p+1/a$. Notice that the product law is allowed thanks to condition~$1>1/a> 1- 3/p$, which implies~$2s>0$ and~$s-3/p<0$.
But an easy computation shows that (for each~$\e$)
$$
 \|   \Lambda_{j_1,n}   U_1^\e\|_{\tl^{\tilde a}(\R^+;\dot B^{s}_{p,p})}  \lesssim \lambda_{j_1,n}^{\frac 3p+\frac2{\tilde a} - s-1} =  \lambda_{j_1,n}^{\frac 2{\tilde a}-\frac 1 a}
$$
and
$$
 \|  \Lambda_{j_2,n}   U_2^\e \|_{\tl^{\tilde a'}(\R^+;\dot B^{s}_{p,p})}
 \lesssim \lambda_{j_2,n}^{\frac 3p+\frac2{\tilde a'} - s-1} =  \lambda_{j_2,n}^{\frac 2{\tilde a'}-\frac 1 a}
 =  \lambda_{j_2,n}^{\frac 1 a-\frac 2{\tilde a}}
$$
so going back to~(\ref{estimateHn22}) we find that
$$
 \|U_{n}^\e\|_{\tl^a(\R^+;\dot B^{-2+\frac  3 p+\frac2a}_{p,p})} \leq \left(\frac{ \lambda_{j_1,n}}{ \lambda_{j_2,n}}\right)^{\frac 2{\tilde a}-\frac 1a} \to 0 \, ,  \quad n \to \infty\, ,
$$
if~$\lambda_{j_1,n} / \lambda_{j_2,n} \to 0$ as long as $a\leq \tilde a < 2a$.
Exchanging~$j_1$ and~$j_2$ in the computation if~$\lambda_{j_1,n} / \lambda_{j_2,n} \to \infty$, we conclude that~(\ref{resultdifferentscales}) holds.

\medskip
\noindent Now let us assume that~$ \lambda_{j_1,n} \equiv \lambda_{j_2,n}  $. Then by orthogonality of the cores and scales, we know that
$$
|x_{j_1,n}-x_{j_2,n}|/ \lambda_{j_1,n} \to \infty  \, , \quad n \to \infty\, .
$$
Then by scale and translation invariance we have
$$
 \| U_{n}^\e\|_{\tl^a(\R^+;\dot B^{-2+\frac 3 p+\frac2a}_{p,p})} = \Big \|
U_1^\e (x,t)\otimes U_2^\e\big (x  + \frac{x_{j_1,n}-x_{j_2,n}} {\lambda_{j_1,n} },t\big)
 \Big\|_{\tl^a(\R^+;\dot B^{-2+\frac 3 p+\frac2a}_{p,p})} \, .
 $$
Define
$$
\widetilde U_{n}^\e(t,x):=  U_1^\e (t,x)\otimes U_2^\e\big (x  + \frac{x_{j_1,n}-x_{j_2,n}} {\lambda_{j_1,n} },t\big) \, .
$$
Then due to the assumption on the supports of~$U_1^\e$ and~$U_2^\e$ we   find that
$$
|x_{j_1,n}-x_{j_2,n}|/ \lambda_{j_1,n} \to \infty  \quad  \Longrightarrow \quad \widetilde U_{n}^\e\equiv  0
$$
for~$n$ large enough, uniformly in~$x$ and~$t$.
With~(\ref{resultdifferentscales}) we therefore infer that  as soon as~$j_1 \neq j_2$ then
$$
 \|      \Lambda_{j_1,n} U_1^\e\otimes     \Lambda_{j_2,n} U_2^\e \|_{\tl^a(\R^+;\dot B^{-2+\frac 3 p+\frac2a}_{p,p})} \to 0 \, ,\quad  n \to \infty\, .
$$
Plugging that result into~(\ref{estimateHn21})  and recalling~(\ref{approxdensity}), we find that
\begin{equation}\label{thefinalresultB2}
\begin{aligned}
& (\lambda_{j_1,n} , x_{j_1,n}) \perp  (\lambda_{j_2,n} , x_{j_2,n})  \quad \mbox{and} \quad  U_1, U_2 \in {\mathscr L}^{1:\infty}_{p}(\infty)\\
& \quad   \Longrightarrow\quad \| {  B}_{2}\big(   \Lambda_{j_1,n} U_1,  \Lambda_{j_2,n} U_2 \big) \| _{{\mathscr L}^{a:\infty}_{p}(\infty)}   \to 0 \, , \quad n \to \infty \, .
\end{aligned}
\end{equation}

\medskip
\noindent $ \bullet $ $ $ Next let us consider~${\mathscr{B}}_3 $.
We recall that~${\mathscr{B}}_3 = \mbox{Id} + B_2 + B_3$ where from~(\ref{formulaH3}) we can recover
$$
B_3 (U_1,U_2,U_3) = 2B_2 \big(U_1, B_2(U_2,U_3) \big)  \, .
$$
Now let~$ (\lambda_{j_1,n} , x_{j_1,n}) ,  (\lambda_{j_2,n} , x_{j_2,n}),  (\lambda_{j_3,n} , x_{j_3,n})  $ be a set of scales and cores, such that at least two are orthogonal. We write
$$
 {  B}_{3}\big(  \Lambda_{j_1,n} U_1,   \Lambda_{j_2,n} U_2,  \Lambda_{j_{3},n} U_{3} \big)
= 2B_2 \big(  \Lambda_{j_1,n}U_1, B_2(  \Lambda_{j_2,n} U_2,  \Lambda_{j_{3},n} U_{3}) \big)
$$
and let us start by assuming that~$ (\lambda_{j_2,n} , x_{j_2,n})$ is orthogonal to~$ (\lambda_{j_3,n} , x_{j_3,n}) $. Then
  we simply write   by product laws again,
$$\begin{aligned}
& \big\| B_2 (  \Lambda_{j_1,n} U_1, B_2(  \Lambda_{j_2,n} U_2,  \Lambda_{j_{3},n} U_{3}))
\big \| _{{\mathscr L}^{a:\infty}_{p}(\infty)}\\
& \quad \lesssim \| U_1 \| _{{\mathscr L}^{1:\infty}_{p}(\infty)}
\|  B_2(  \Lambda_{j_2,n} U_2,  \Lambda_{j_{3},n} U_{3})\|_{{\mathscr L}^{a:\infty}_{p}(\infty)}
\end{aligned}
$$
and we conclude as above thanks to~(\ref{thefinalresultB2}).

\smallskip
\noindent Conversely if~$ (\lambda_{j_2,n} , x_{j_2,n})$ is not orthogonal to~$ (\lambda_{j_3,n} , x_{j_3,n}) $ then without loss of generality we may assume that~$  \Lambda_{j_2,n}  \equiv   \Lambda_{j_{3},n} $, and~$(\lambda_{j_1,n} , x_{j_1,n})$ must be orthogonal to~$(\lambda_{j_2,n} , x_{j_2,n})$.
We therefore have
$$
B_2 \big(  \Lambda_{j_1,n} U_1, B_2(  \Lambda_{j_2,n} U_2,  \Lambda_{j_{3},n} U_{3}) \big) =
B_2 \big (  \Lambda_{j_1,n} U_1,   \Lambda_{j_{2},n}  B_2(   U_2,U_{3}) \big)  \, .
$$
Defining~$  U_2:= B_2(  U_2,U_{3})$ we know that
$$
\|   U_{2}\|_{{\mathscr L}^{1:\infty}_{p}(\infty)}
\lesssim \|  U_2\|_{{\mathscr L}^{1:\infty}_{p}(\infty)}  \|  U_{3}\|_{{\mathscr L}^{1:\infty}_{p}(\infty)}
$$
so we can conclude  again using~(\ref{thefinalresultB2}).

 \medskip
\noindent $ \bullet $ $ $ In the case of higher order operators~$B_\ell$, with~$\ell \geq 4$, we apply exactly the same strategy as above: by construction,~$B_\ell$ writes as a bilinear operator~$B_2$ whose arguments are either~$u_L,$ ~$B_2(u_L,u_L)$, or iterates of those bilinear operators like~$B_2(u_L,B_2(u_L,u_L))$ and so forth.

\smallskip
\noindent
If in  the formula defining~$B_\ell$, two orthogonal vector fields~$  \Lambda_{j_k,n} U_{k}$ and~$  \Lambda_{j_{k'},n} U_{k'}$ appear as  the two arguments of an operator~$B_2$, as in~$B_2(  \Lambda_{j_k,n} U_{k},  \Lambda_{j_{k'},n} U_{k'})$, then we use product laws to find
$$
\big \|  {  B}_{\ell}\big(   \Lambda_{j_1,n} U_1, \dots,     \Lambda_{j_\ell,n} U_\ell \big)\big \| _{{\mathscr L}^{a:\infty}_{p}(\infty)}  \lesssim \|B_2\big (   \Lambda_{j_k,n} U_{k} ,   \Lambda_{j_{k'},n} U_{k'}\big)\|_{{\mathscr L}^{a:\infty}_{p}(\infty)}
$$
and we conclude with~(\ref{thefinalresultB2}) again.

\smallskip
\noindent If that is not the case, that means that
each time an operator~$B_2 (  \Lambda_{j_i,n}    U_i ,  \Lambda_{j_{i'},n}    U_{i'} )$ appears in~$B_\ell$, then again without loss of generality we may assume~$  \Lambda_{j_i,n}  \equiv   \Lambda_{j_{i'},n}$ so we can unscale   that~$B_2$ operator using
$$
B_2 (  \Lambda_{j_{i},n}    U_i ,  \Lambda_{j_{i},n}    U_{i'} ) =   \Lambda_{j_{i},n}       U_{i,i'} :=    \Lambda_{j_i,n}  B_2 (   U_i ,     U_{i'} ) \, .
$$
Then we iterate this procedure, noticing that~$  U_{i,i'} $ is independent of~$n$ and belongs to~${\mathscr L}^{1:\infty}_{p}$ by product laws.
At some stage of the procedure,
since by assumption some scales are orthogonal, one ends up in  a situation where in  the formula defining~$B_\ell$, there appears a term of the form~$B_2(   \Lambda_{j_k,n}    U _k ,  \Lambda_{j_{k'},n}   U_{k'}  )$ with~$  \Lambda_{j_k,n} $ and~$  \Lambda_{j_{k'},n}$ orthogonal and where~$   U_k  $ and~$  U_{k'} $ depend on other functions~$U_j$ via a possibly large number of iterations of operators~$B_2$, but are independent of~$n$ and belong to~${\mathscr L}^{1:\infty}_{p}(\infty)$ as in the previous case. So again we can use~(\ref{thefinalresultB2}) and the result follows.

 \medskip
\noindent The lemma is proved.
\hfill $\Box$

\subsection{An orthogonality result: proof of Proposition \ref{claim33}}\label{proofclaim33}
\quad
Let us define
$$
v_n:=
u_n (t_n)-\big(  \Lambda_{1,n}U_1 \big) (t_n)  \, .
$$
Then
$$
\begin{aligned}
\|u_n(t_n)\|_{\bpp}^p & = \sum_{j \in \Z} 2^{jp s_{p}} \|\Delta_j u_n(t_n)\|_{L^p}^p\\
& = \sum_{j \in \Z} 2^{jp s_{p}}  \Big\|\Delta_j  \Big(\big(  \Lambda_{1,n}U_1 \big) (t_n) +v_n \Big) \Big \|_{L^p}^p \, ,
\end{aligned}
$$
and we now decompose
\begin{equation}
\label{decomposition1}\begin{aligned}
&  \sum_{j \in \Z} 2^{jp s_{p}} \Big\|\Delta_j  \Big(\big(  \Lambda_{1,n}U_1 \big) (t_n) +v_n \Big) \Big \|_{L^p}^p - \big\|\big(  \Lambda_{1,n}U_1 \big) (t_n) \big\|_{\bpp}^p -
\|v_n(t_n)\|_{\bpp}^p \\
& \quad =
\sum_{r=1}^{p-1}C_p^r \int_{\R^d}
\sum_{j \in \Z} 2^{jp s_{p}}    \big | \Delta_j \big(  \Lambda_{1,n}U_1 \big) (t_n) \big |^r  \big | \Delta_j v_n\big |^{p-r} (x)\,  dx \, .
\end{aligned}
\end{equation}
To prove the lemma, it therefore suffices to prove that for all~$1 \leq r \leq p-1$,
\begin{equation}
\label{decomposition}
\lim_{J \to\infty} \limsup_{n \to \infty} \int_{\R^d}
\sum_{j \in \Z} 2^{jp s_{p}}    \big | \Delta_j \big(  \Lambda_{1,n}U_1 \big) (t_n) \big |^r  \big | \Delta_j v_n\big |^{p-r} (x)\,  dx = 0 \, .
\end{equation}
Next let us write, for a given~$J$ (large)
$$
v_n = v_n^{(1,J)} + v_n^{(2,J)}  \, ,
$$
where we have defined
\begin{equation}\label{defvn1vn2}
\begin{aligned}
 v_n^{(1,J)}   := \sum_{k=2}^{J} \big(  \Lambda_{k,n}U_k\big)(t_n )  \quad \mbox{and}¬†\quad
  v_n^{(2,J)}  := w_n^J(t_n)+  r_n^J(t_n)    \, .
 \end{aligned}
\end{equation}
 First let us   deal with the contribution of~$v_n^{(2,J)}$, which is the easiest: recalling that~$w_n^J(t)= e^{t\Delta} \psi_n^J$,
we   start by noticing that
\begin{equation}
\label{limitzero}
\begin{aligned}
\|v_n^{(2,J)}\|_{\dot B^{-1}_{\infty,\infty}} & \leq \| e^{t_n\Delta}   \psi_n^J\|_{\dot B^{-1}_{\infty,\infty}} +
\|r_n^J(t_n)\|_{\dot B^{-1}_{\infty,\infty}} \\
&\lesssim  \|    \psi_n^J\|_{\dot B^{s_q}_{q,q}} +
\|r_n^J(t_n)\|_{\dot B^{s_q}_{q,q}} \to 0 \, , \quad J \to \infty \,,
\end{aligned}
\end{equation}
uniformly in~$n$. But by H\"older's inequality in the~$x$ variable,
$$
\begin{aligned}
\int_{\R^d}
\sum_{j \in \Z} 2^{jp s_{p}}   &  \big | \Delta_j \big(   \Lambda_{1,n}U_1 \big) (t_n) \big |^r  \big | \Delta_j v_n^{(2,J)}\big |^{p-r} (x)\,  dx \\
&  \leq \sum_{j \in \Z} 2^{jr s_r}    \big \| \Delta_j \big(  \Lambda_{1,n}U_1 \big) (t_n) \big \|_{L^r}^r \| \Delta_j v_n^{(2,J)}\|_{L^\infty} ^{p-r}
\end{aligned}
$$
so
$$
\begin{aligned}
\int_{\R^d}
\sum_{j \in \Z} 2^{jp s_{p}}   &  \big | \Delta_j \big(   \Lambda_{1,n}U_1 \big) (t_n) \big |^r  \big | \Delta_j v_n^{(2,J)}\big |^{p-r} (x)\,  dx \\
&  \leq \sum_{j \in \Z} 2^{jr s_r}    \big \| \Delta_j \big(  \Lambda_{1,n}U_1 \big) (t_n) \big \|_{L^r}^r  \|v_n^{(2,J)}\|_{\dot B^{-1}_{\infty,\infty}} ^{p-r} \\
&  \leq  \big\|  \big(  \Lambda_{1,n}U_1 \big) (t_n) \big\|_{\dot B^{s_r}_{r,r}} ^r \|v_n^{(2,J)}\|_{\dot B^{-1}_{\infty,\infty}} ^{p-r} \, .
\end{aligned}
$$
By scale invariance we find
$$
\begin{aligned}
\int_{\R^d}
\sum_{j \in \Z} 2^{jp s_{p}}      \big | \Delta_j \big(  \Lambda_{1,n}U_1 \big) (t_n) \big |^r  \big | \Delta_j v_n^{(2,J)}\big |^{p-r} (x)\,  dx  \lesssim\big\| U_1 (s)\big\|_{\dot B^{s_r}_{r,r}} ^r \|v_n^{(2,J)}\|_{\dot B^{-1}_{\infty,\infty}} ^{p-r} \, .
\end{aligned}
$$
It follows from~(\ref{limitzero}) that
\begin{equation}\label{resultvn2j}
\lim_{J \to \infty} \limsup_{n \to \infty } \int_{\R^d}
\sum_{j \in \Z} 2^{jp s_{p}}      \big | \Delta_j \big(  \Lambda_{1,n}U_1 \big) (t_n) \big |^r  \big | \Delta_j v_n^{(2,J)}\big |^{p-r} (x)\,  dx  = 0 \, .
\end{equation}

\medskip
\noindent
Let us now analyze the contribution of   the term~$v_n^{(1,J)}$, for~$J $ fixed. By orthogonality, as in~(\ref{orthogonalitybesovprofiles}), we know that that for any~$J' \leq J$,
\begin{equation}\label{orthoJJ'}
\big \| \sum_{k=J'}^{J} \big(  \Lambda_{k,n}U_k\big)(t_n )\big\|_\bpp^p = \sum_{k=J'}^{J} \| \big(  \Lambda_{k,n}U_k\big)(t_n )\|_\bpp^p + \varepsilon (J,n)
\end{equation}
where for each given~$J$, $\varepsilon (J,n)
\to 0$ as~$n \to \infty$.

\medskip
\noindent We notice that  each profile~$U_k$ may be chosen as smooth as
necessary in~$t$ and~$x$ (see e.g. \cite{Gallagher} for a similar procedure). Since by definition of~$t_n$
$$
 \big(  \Lambda_{k,n}U_k\big)(t_n ) =\frac 1 {\lambda_{k,n}} U_k \big( \frac{x-x_{k,n}}{\lambda_{k,n}} , \frac{\lambda_{1,n}^2s}{\lambda_{k,n}^2}\big)
$$
we get
$$
 \big\|
 \big(  \Lambda_{k,n}U_k\big)(t_n )
\big \|_{\bpp}  =  \big\|
 U_k\big(\cdot,\frac{\lambda_{1,n}^2s}{\lambda_{k,n}^2} \big) \big \|_{\bpp}
$$
hence in particular, by~(\ref{limTinfty}),
\begin{equation}\label{lambda0kinfty}
\lambda_{1,n}/\lambda_{k,n}\to \infty \quad \Longrightarrow \quad  \big\|
 \big(  \Lambda_{k,n}U_k\big)(t_n )
\big \|_{\bpp} \to 0 \, , \quad n \to \infty \, .
\end{equation}
Notice that~$\lambda_{1,n}/\lambda_{k,n}\to \infty $ is only possible if~$T_k^* = \infty $, by~(\ref{reorder}). From~(\ref{lambda0kinfty}) we get that for each~$J$,
$$
  \limsup_{n \to \infty } \Big(  \big \|  \sum_{k=1}^{J } \big(  \Lambda_{k,n}U_k\big)(t_n )  \big\|_\bpp^p - \Big \|  \sumetage{k=1}{\lambda_{1,n}/\lambda_{k,n}\to 0}^{J }   \Lambda_{k,n}\phi_k    \Big\|_\bpp^p - \Big \|  \sumetage{k=1}{\lambda_{1,n}\equiv\lambda_{k,n} }^{J}   \Lambda_{k,n}\big( U_k   (s) \big) \Big\|_\bpp^p  \Big) = 0 \, .
$$
It follows that to end the study of the contribution of~$ v_n^{(1,J)}$ we just need to prove the two following properties:  for each~$
1 \leq k \leq J$, if~$\lambda_{1,n}/\lambda_{k,n}\to 0$ then
\begin{equation}\label{limitphik}
\int_{\R^d}
\sum_{j \in \Z} 2^{jp s_{p}}    \big | \Delta_j \big(  \Lambda_{1,n}U_1 \big) (t_n) \big |^r  \big | \Delta_j   \Lambda_{k,n}\phi_k \big |^{p-r} (x)\,  dx \to 0 \, , \quad n \to \infty\, ,
\end{equation}
while if~$\lambda_{1,n}\equiv\lambda_{k,n}$ then
\begin{equation}\label{limitUk}
\int_{\R^d}
\sum_{j \in \Z} 2^{jp s_{p}}    \big | \Delta_j \big(  \Lambda_{1,n}U_1 \big) (t_n) \big |^r  \big | \Delta_j     \Lambda_{k,n}U_k   (s) \big |^{p-r} (x)\,  dx \to 0 \, , \quad n \to \infty \, .
\end{equation}
Let us start by proving~(\ref{limitphik}).
By density  we  assume that for each~$1 \leq k \leq J$,~$\phi_k$ has a spectrum restricted to a given ring of~$\R^d$, of small radius~$r_k$ and large radius~$R_k$.
It is plain to see that for any function~$f$,
$$
\Delta_j   \Lambda_{k,n}   f=    \Lambda_{k,n}  \Delta_{j+ \log_2\lambda_{k,n}}f
$$
so there are universal constants~$c$ and~$C$ such that
$$
\Delta_j   \Lambda_{k,n}  \phi _k  \neq 0 \Longrightarrow c r_k \leq 2^j \lambda_{k,n}
\leq CR_k  \, .
$$
Assuming similarly that~$ U_1 (s)  $ has  a spectrum restricted to a given ring of~$\R^d$, of small radius~$r_0$ and large radius~$R_0$, we get
$$
\Delta_j  \Lambda_{1,n}  U_1 (s)   \neq 0 \Longrightarrow c r_0 \leq 2^j \lambda_{1,n}
\leq CR_0\, .
$$
If~$\lambda_{1,n}/\lambda_{k,n}\to 0$ then those two conditions are asymptotically incompatible, hence
 $$
\begin{aligned}
&\sum_{j \in \Z}  \int_{\R^d}
  2^{jp s_{p}}    \big |\Delta_j \big(  \Lambda_{1,n}  U_1 \big) (t_n) \big |^r  \big | \Delta_j (    \Lambda_{k,n}  \phi_k ) \big |^{p-r} (x)\,  dx \to 0 \, , \quad n \to \infty \, ,
  \end{aligned}
 $$
 which proves~(\ref{limitphik}). Now let us prove~(\ref{limitUk}). If~$ \lambda_{1,n} \equiv  \lambda_{k,n}$
 then
 $$
\begin{aligned}
&\sum_{j \in \Z}  \int_{\R^d}
  2^{jp s_{p}}    \big |\Delta_j   \Lambda_{1,n}  U_1 (s,x) \big |^r  \big | \Delta_j \Lambda_{k,n}  U_k(s,x)\big |^{p-r}  \,  dx \\
 & = \sum_{j \in \Z}  \int_{\R^d}
  2^{jp s_{p}}    \big |\Delta_j      U_1  (s,y) \big |^r  \big | \Delta_j      U_k  (s, y + \frac{x_{1,n}-x_{k,n} }{ \lambda_{k,n}} )\big |^{p-r} (x)\,  dx
  \end{aligned}
 $$
 which goes to zero by Lebesgue's theorem, due to the orthogonality of the cores of concentration. So we have proved that
 \begin{equation}
\label{limitzerovn1}
 \int_{\R^d}
 \sum_{j \in \Z} 2^{jp s_{p}}    \big | \Delta_j \big(  \Lambda_{1,n}U_1 \big) (t_n) \big |^r  \big | \Delta_j v_n^{(1,J)}\big |^{p-r} (x)\,  dx  \to 0 \, ,\quad  n \to \infty \, .
\end{equation}

\medskip
\noindent
With~(\ref{resultvn2j}) this proves~(\ref{decomposition}) hence thanks to~(\ref{decomposition1}),
$$
\begin{aligned}
 \sum_{j \in \Z} 2^{jp s_{p}} \Big\|\Delta_j  \Big(\big( \Lambda_{1,n}U_1 \big) (t_n) +v_n \Big) \Big \|_{L^p}^p - \big\|\big( \Lambda_{1,n}U_1 \big) (t_n) \big\|_{\bpp}^p\\
  -
\|v_n(t_n)\|_{\bpp}^p \to 0 \, , \,  n \to \infty
\end{aligned}
$$
whence the result. \hfill $\Box$

\section{Improving bounds for solutions to Navier-Stokes via iteration}\label{iterationsection}
\noindent
The goal of this section is to prove Proposition  \ref{lemmaimprovedb}. This will follow, in Section~\ref{prooflemmaimproved}, from the following statement whose proof is postponed to  Section \ref{iterationprocedure}.
We define Kato spaces on a time interval $(0,T) $ for~$q \in (3,\infty]$ by
\begin{equation}\label{katospacedef}
\mathscr{K}_q(T):= \{u\in \mathcal{S}'(\rt \times \R^+) \, | \,
\|u\|_{\mathscr{K}_q(T)}:=\sup_{0<t \leq T} t^{-{s_q}/2}\|u(t)\|_{L^q} < \infty \}
\end{equation}
 as well as
 \begin{equation}\label{katospacedef1}
 \mathscr{K}^{1}_q(T):= \{u\in \mathcal{S}'(\rt \times \R^+) \, | \,
 \|u\|_{\mathscr{K}^{1}_q(T)}:=\sup_{0<t \leq T} t^{1/2-s_{q}/2}\|u(t)\|_{{\dot B}^{1}_{q,\infty}} < \infty \}
 \end{equation}
where we recall that $-s_q:=1-\frac 3 q>0$, and for  $3 < q_1 \leq q_2 \leq \infty$, we set
\begin{equation}\label{katospacedefrange}
\mathscr{K}_{q_1:q_2}(T):= \bigcap_{q_1 \leq q < q_2}  \mathscr{K}_q(T) \, .
\end{equation}
 \begin{remark}\label{usefulembedding}
Notice  that for $p>3$
\begin{equation}
  \label{eq:2}
  \|f\|^{}_{\mathscr{K}_p(T)} \lesssim \| f\|_{L^{\infty}(0,T;\dot
    B^{s_p}_{p,\infty})}^{\frac{p}{2p-3}}\|f\|_{\mathscr{K}^{1}_p(T)}^{\frac{p-3}{2p-3}}\,\,\text{
    and } \,\,     \|f\|^{}_{\mathscr{K}_\infty(T)} \lesssim \|f\|^{1-\frac3p}_{\mathscr{K}_p(T)} \|f\|^{\frac3p}_{\mathscr{K}^{1}_p(T)}
\end{equation}
which follow directly from the embeddings $\dot B^0_{q,1} \subset
L^q$ with $q=p,+\infty$ and $ \ell^{\infty}_{s_p}\cap \ell^{\infty}_{1}\subset \ell^{1}$, with $\|(\gamma_{j})_{j}\|_{ \ell^{\infty}_{s}} = \sup_{j} 2^{sj} |\gamma_{j}|$. Note moreover that
$\|e^{t\Delta}u_0\|_{\ck_p(\infty)}
+\|e^{t\Delta}u_0\|_{\ck^{1}_{p}(\infty)} \lesssim \|u_0\|_{\dot
  B^{s_p}_{p,\infty}}$ (in fact, both quantities on the left are
equivalent to the norm on the right).
\end{remark}
\pagebreak
\begin{thm}[Iteration and regularity of iterates]\label{expandprop}
Fix $p=3\cdot 2^{k}-2$ for some integer~$k$ such that~$k\geq 2$. Suppose ~$u_0 \in \dot B^{s_p}_{p,p}$, set $T^*:= T^*(u_0)$ and define the associated solution~$u:=NS(u_0) $ belonging to~$ \mathscr{L}^{1:\infty}_{p}[T<T^*]$.  Then there exists a family~$(u_{L,n})_{n\in [0,k]}$ (with~$u_{L,0}=\etl u_0$) and $w_{k}$ with the following three properties:
\begin{itemize}
\item[(I)]
$u_{L,n} \in \ln{1:\infty}_{\frac{p}{2^n},\frac{p}{2^n}}(T^*)$ for all $n\in \{0,\dots , k\}$, with
\begin{equation}
  \label{eq:6}
  \|u_{L,n}\|_{ \ln{1:\infty}_{\frac{p}{2^n},\frac{p}{2^n}}(T^*)}  \lesssim C(\|u_0\|_{ \dot B^{s_p}_{p,p}}) \,
\end{equation}
where $C$ is an explicit smooth function, and
\begin{equation}
  \label{eq:9}
u = \sum_{n=0}^{k} u_{L,n} + w_{k} \quad \textrm{in} \,\, \,\,
\mathscr{L}^{1:\infty}_{p}[T<T^*]\, .
\end{equation}
\item[(II)]
We have
$u_{L,n} \in  \mathscr{K}_{p:\infty}(T^*)$ for all $n\in \{0,\dots , k\}$.
\item[(III)]
If $u\in L^\infty (0,T^*;\dot B^{s_p}_{p,p})$, then
  $w_{k}$ has positive regularity up to time $T^*$, e.g. $w_{k} $ belongs to~$ \ln{\infty}_{\frac {6p}{2p+1},\infty}(T^*)$, and
\begin{equation}
  \label{eq:1}
  \|w_{k}\|_{ \ln{\infty}_{\frac
      {6p}{2p+1},\infty}(T^*)}\lesssim F(\|u_0\|_{\dot B^{s_p}_{p,p}}, \| u\|_{L^\infty (0,T^*;\dot B^{s_p}_{p,p})})\,,
\end{equation}
where $F$ is a  smooth function in two variables which may be computed explicitly.
    \end{itemize}
\end{thm}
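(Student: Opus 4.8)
\emph{Proof sketch.} The proof builds on the iteration behind Lemma~\ref{decompositionNS}, but isolates the \emph{homogeneous} pieces of the Picard expansion in $u_L:=\etl u_0$ and tracks their Kato--space regularity. Define $u^{(1)}:=u_L$ and recursively $u^{(m)}:=\sum_{i+j=m}B_2(u^{(i)},u^{(j)})$ for $2\le m\le 2^{k+1}$ (with $B_2$ as in~(\ref{defB2})); these are well--defined functions on $(0,\infty)$ irrespective of the size of $u_0$. Three ingredients will be used throughout: \emph{(i)} the heat estimate $\|u_L\|_{\ln{1:\infty}_{p,p}(\infty)}+\|u_L\|_{\ck_{p:\infty}(\infty)}\lesssim\|u_0\|_{\bpp}$, valid on all of $(0,\infty)$ with no smallness (Remark~\ref{usefulembedding}; here $p\ge 10$); \emph{(ii)} the bilinear bound $\|B_2(a,b)\|_{\ln{1:\infty}_{\frac p{i+j},\frac p{i+j}}(T)}\lesssim\|a\|_{\ln{1:\infty}_{\frac pi,\frac pi}(T)}\|b\|_{\ln{1:\infty}_{\frac pj,\frac pj}(T)}$, valid whenever $3(i+j-1)<p$ with constant independent of $T\in(0,\infty]$ (so it never sees the possible blow--up of $\|u\|_{\ln{1:\infty}_{p}(T)}$ as $T\nearrow T^*$, since we only feed it $u_L$), together with its Kato analogue $\|B_2(a,b)\|_{\ck_{p:\infty}(T)}\lesssim\|a\|_{\ck_{p:\infty}(T)}\|b\|_{\ck_{p:\infty}(T)}$, which closes up to integrability $\infty$ precisely because $p>6$; and \emph{(iii)} the Bernstein embeddings $\ln{1:\infty}_{\frac pm,\frac pm}\hookrightarrow\ln{1:\infty}_{\frac p{m'},\frac p{m'}}$ for $m\ge m'$, the derivative loss being exactly paid by the integrability gain (see Appendix~\ref{paraproductsapp} for~(ii)--(iii)). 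An induction on $m$ using \emph{(i)}--\emph{(ii)}---legitimate for all $m\le 2^k$, because $p=3\cdot2^k-2$ makes the constraint $3(2^k-1)<p$ just hold---gives $\|u^{(m)}\|_{\ln{1:\infty}_{\frac pm,\frac pm}(\infty)}+\|u^{(m)}\|_{\ck_{p:\infty}(\infty)}\le C_0^{\,m}\|u_0\|_{\bpp}^{\,m}$ for $1\le m\le 2^k$; this tightness is exactly why $p$ is taken of that form, $2^k$ being the largest order reachable in the $\mathscr L$--scale, at level $p/2^k\in(2,3)$.

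Now set
$$
u_{L,0}:=u^{(1)},\qquad u_{L,n}:=\sum_{m=2^n}^{2^{n+1}-1}u^{(m)}\ \ (1\le n\le k-1),\qquad u_{L,k}:=u^{(2^k)},
$$
and $v_k:=\sum_{n=0}^k u_{L,n}=\sum_{m=1}^{2^k}u^{(m)}$, $w_k:=u-v_k$. Every summand of $u_{L,n}$ has order $m\ge 2^n$, so by~\emph{(iii)} one gets $u_{L,n}\in\ln{1:\infty}_{\frac p{2^n},\frac p{2^n}}(T^*)$ with norm $\lesssim C(\|u_0\|_{\bpp})$, which is (I)/(\ref{eq:6}); summing the Kato bounds of the $u^{(m)}$ gives (II). Since also $\ln{1:\infty}_{\frac pm,\frac pm}\hookrightarrow\ln{1:\infty}_{p,p}$ for every $m\ge1$ by~\emph{(iii)}, we have $v_k\in\ln{1:\infty}_{p,p}(T^*)$, hence $w_k\in\ln{1:\infty}_{p,p}[T<T^*]$ and the identity~(\ref{eq:9}); uniqueness and smoothness in~(\ref{timecontinuous}) make all of this rigorous.

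For (III) one analyses the equation satisfied by $w_k$. Expanding $u\otimes u=(v_k+w_k)^{\otimes 2}$ and $v_k\otimes v_k=\sum_{1\le i,j\le 2^k}u^{(i)}\otimes u^{(j)}$, and using the mild formulation of (NS) for $u$ together with the recursion defining the $u^{(m)}$, one finds $w_k|_{t=0}=0$ and
$$
w_k(t)=-\int_0^t e^{(t-s)\D}\,\P\,\mathrm{div}\,\bigl(G+v_k\otimes w_k+w_k\otimes v_k+w_k\otimes w_k\bigr)(s)\,ds,\qquad G:=\!\!\!\sum_{\substack{2^k<i+j\le 2^{k+1}\\ 1\le i,j\le 2^k}}\!\!\!u^{(i)}\otimes u^{(j)}.
$$
Here $G$ is a \emph{fixed} finite sum of products of the $u^{(i)}$, with combined degrees running from $2^k+1$ up to $2^{k+1}$, all beyond the range $i+j<p/3+1$ of the $\mathscr L$--scale bilinear estimate; but products of Kato functions cause no trouble, and~(II) gives $\|G\|\lesssim C(\|u_0\|_{\bpp})$ in spaces whose decay makes $\int_0^t e^{(t-s)\D}\P\mathrm{div}\,G\,ds$ land in a space of strictly positive regularity---this is precisely where part~(II) is needed. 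On the other hand $u\in L^\infty(0,T^*;\bpp)$ by hypothesis and $v_k\in L^\infty(0,T^*;\bpp)$ by~(I), hence $w_k\in L^\infty(0,T^*;\bpp)$. Feeding this bound, and the Kato control~(II) on $v_k$ and $G$, into the Duhamel identity and exploiting the smoothing of $\int_0^t e^{(t-s)\D}\P\mathrm{div}(\cdot)\,ds$ (one derivative, paid for by the $\ck$--decay of $v_k$ and $G$), a \emph{finite} bootstrap---started from $w_k|_{t=0}=0$ and gaining a fixed amount of regularity at each step---upgrades $w_k$ from the critical, negative--regularity bound $L^\infty\bpp$ first to $\ck_p(T^*)$ and then to $\ln{\infty}_{\frac{6p}{2p+1},\infty}(T^*)=\tl^\infty\!\bigl([0,T^*];\dot B^{\frac1{2p}}_{\frac{6p}{2p+1},\infty}\bigr)$; the exponent $\tfrac{6p}{2p+1}$ is exactly the one for which $\int_0^t e^{(t-s)\D}\P\mathrm{div}(v_k\otimes w_k)\,ds$, with $v_k,w_k\in\ck_p$, has bounded $\dot B^{1/(2p)}_{6p/(2p+1),\infty}$--norm. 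Tracking constants yields~(\ref{eq:1}) with an explicit $F(\|u_0\|_{\bpp},\|u\|_{L^\infty(0,T^*;\bpp)})$.

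\smallskip
\noindent\emph{Main obstacle.} Two points are delicate. First, the dyadic bookkeeping must be arranged so that the integrability index genuinely halves at each stage; this is what pins down $p=3\cdot2^k-2$ and the number $k+1$ of terms. More seriously, the bootstrap in (III) must cross from negative to positive regularity \emph{uniformly up to $T^*$}, even though the a priori hypothesis controls only a critical (negative--regularity) norm of $u$; this is possible only because the decomposition isolates a drift $v_k$ and a source $G$ controlled in the full Kato range $\ck_{p:\infty}$ (part~(II), whence the need for $p$ large), while the remainder $w_k$ vanishes at $t=0$.
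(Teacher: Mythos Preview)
Your construction for Parts~(I) and~(II) via dyadically grouped Picard iterates $u^{(m)}$ is a legitimate and somewhat more elementary alternative to the paper's inductive construction (Proposition~\ref{ajoutisa}), which instead builds the $u_{L,n}$ through the inverse operator $K[v]:=(\mathrm{Id}-2B_\sigma(v,\cdot))^{-1}$. The bounds you claim there follow from your ingredients~(i)--(iii).

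Part~(III), however, has a genuine gap: your ``finite bootstrap'' does not close. Starting from the sole uniform information $w_k\in L^\infty(0,T^*;\dot B^{s_p}_{p,p})$ you face two obstructions in the Duhamel identity. First, the linear drift $2B_\sigma(v_k,w_k)$ carries a coefficient $\|v_k\|\sim C(\|u_0\|_{\bpp})$ which is \emph{not small}, so it cannot be absorbed into the left-hand side by any continuity or fixed-point argument uniformly on $(0,T^*)$; passing through $\mathscr{K}_p(T^*)$ as you suggest runs into exactly this large-drift problem (it is the same obstruction as to global well-posedness for large data). Second, the quadratic term $B(w_k,w_k)$ is not controlled from $L^\infty\dot B^{s_p}_{p,p}$ alone: the paraproduct high-high constraint $2s_p>0$ fails for $p>3$, so no regularity gain is available there without further input.

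The paper's resolution is not a bootstrap but an \emph{interpolation} argument built around $K[v]$. The operator $K[v]$ is continuous on the $\mathscr{B}^{1:\infty}_{q/2}$ scale regardless of the size of~$v$ (Lemma~\ref{invertab}, via the time-subdivision of Proposition~\ref{pdr1lin}); crucially, the tame estimate~(\ref{estimateBrondinfini}) controls $K[v]z$ at positive regularity by interpolating between the target norm and the a~priori $L^\infty\dot B^{s_p}_{p,\infty}$ bound, producing an exponent $<1$ on the unknown. One then works with $\nu_k:=L[u_L^{(k)}]w_k$ and closes the estimate on $\|\nu_k\|_{\mathscr{L}^\infty_{3p/(2p-1),\infty}}$ because the quadratic term appears with exponent $2\theta<1$ after a second interpolation. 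Your equation can be rewritten as $L[v_k]w_k=H_G+B(w_k,w_k)$ and treated the same way, but you must actually invoke $K[v_k]$, prove the analogue of~(\ref{estimateBrondinfini}), and carry out the subcritical-exponent argument; the phrase ``finite bootstrap'' does not capture any of this.
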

\noindent
As in~(\ref{estimateZ2}) in the previous section,
the key argument to the proof of Theorem~\ref{expandprop}
is that the bilinear form allows improvement from~$L^\frac p{2^n}$ to~$L^\frac p{2^{n+1}}$ integrability: this allows one to show that each term in the expansion is smoother than the previous one, and to recover in a finite number of steps a positive regularity bound: this will be done in the next paragraph, while the proof of Theorem~\ref{expandprop} can be found in Section~\ref{iterationprocedure}.

\subsection{Negative regularity bounds to $L^p$ bounds: proof of Proposition \ref{lemmaimprovedb}}\label{prooflemmaimproved}
  In this section we prove Proposition \ref{lemmaimprovedb} assuming Theorem~\ref{expandprop}.

\medskip
\noindent{\bf Notation.} \quad In the proofs to follow, we shall sometimes simplify notation by symmetrizing the bilinear operator
$$
B(u,v)(t)
:=- \int_0^t e^{(t-t') \Delta} {\mathbb P} \mbox{div} \, (u \otimes v ) (t') \, dt' \, ,
$$ effectively replacing it by $B_\sigma$ defined by
$$
B_\sigma(u,v) := \tfrac 12 [B(u,v) + B(v,u)]
$$
 which is equivalent to replacing the tensor product in the definition of $B$ by
$$u\otimes_\sigma v := \tfrac 12 [u\otimes v + v \otimes u]\, .$$

\noindent Let us assume Theorem \ref{expandprop} holds, postponing its proof until the next section.  In order to use it to prove Proposition \ref{lemmaimprovedb}, we shall need the following statement.

\begin{prop}[A simple iteration]\label{propbbb}
Suppose $u_0 \in \dot B^{s_p}_{p,p}$ for some $p\in (3,\infty)$, set $T^*:= T^*(u_0)$ and define the
associated solution~$u:=NS(u_0) \in
\mathscr{L}^{1:\infty}_{p}[T<T^*]$.  Suppose that  for some~$j\in \N_0$
there exist~$p_{j}\in [1,3) $ and functions~$\tilde v_j$ and $\tilde w_j$ satisfying the statement $(S)_j$ defined by
$$(S)_j \, \, \left\{
\begin{array}{l}
\tilde v_j \in \mathscr{L}^{r:\infty}_{  p,p}(T^*)\cap \mathscr{K}_{p:\infty}(T^*)  \, , \quad r>2 \, \,\text{ s.t.} \,  \,\frac 3 p+\frac 2 r >1 \\\\
\tilde w_j \in  L^\infty (0,T^*; \dot B^{s_p}_{p,p}) \cap \mathscr{L}^\infty_{  p_j,\infty}( T^*)\quad \textrm{and}\\\\
u =  \tilde v_j + \tilde w_j \quad \textrm{in}\,\,  \,\, \mathscr{L}^{\infty}_{p}[T<T^*]\, .
\end{array} \right .
$$
Then there exists $  p_{j+1} \in [1, \tfrac 32)$ with $  p_{j+1} =1$ if $  p_j < \tfrac 32$ such that the functions~$\tilde v_{j+1}$ and $\tilde w_{j+1}$ defined by
$$
 \tilde v_{j+1}:=  \etl u_0 + B(\tilde v_j, \tilde v_j)  + 2B_\sigma (  \tilde v_j, \tilde w_j)
\quad
\textrm{and} \quad \quad  \tilde w_{j+1}:= B(\tilde w_j, \tilde w_j)
$$
satisfy $(S)_{j+1}$.  In particular, if $(S)_0$ holds, then $\tilde w_j \in \mathscr{L}^\infty_{1,\infty}(T^*)$  for all $j\geq 2$.
\end{prop}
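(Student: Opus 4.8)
The plan is to anchor everything on the Duhamel identity $u=\etl u_0 + B(u,u)$ for $u=NS(u_0)$. Writing $u=\tilde v_j + \tilde w_j$ as in $(S)_j$ and expanding by bilinearity,
$$B(u,u)=B(\tilde v_j,\tilde v_j)+2B_\sigma(\tilde v_j,\tilde w_j)+B(\tilde w_j,\tilde w_j),$$
so regrouping exactly as in the statement gives $u=\tilde v_{j+1}+\tilde w_{j+1}$ in $\mathscr{L}^{\infty}_p[T<T^*]$, which is the third requirement of $(S)_{j+1}$. It then remains to verify the membership of $\tilde v_{j+1}$ in $\mathscr{L}^{r':\infty}_{p,p}(T^*)\cap\mathscr{K}_{p:\infty}(T^*)$ for some $r'>2$ with $3/p+2/r'>1$, and of $\tilde w_{j+1}$ in $L^\infty(0,T^*;\bpp)\cap\mathscr{L}^\infty_{p_{j+1},\infty}(T^*)$.

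I would treat $\tilde w_{j+1}=B(\tilde w_j,\tilde w_j)$ first, as it isolates the paper's recurring mechanism: the bilinear operator trades integrability $L^{p_j}\to L^{p_j/2}$ while restoring a full spatial derivative. Since $(S)_j$ puts $\tilde w_j$ in $\mathscr{L}^\infty_{p_j,\infty}(T^*)=\tl^\infty([0,T^*];\dot B^{s_{p_j}}_{p_j,\infty})$ with $s_{p_j}=-1+3/p_j>0$ (using $p_j<3$), the product laws of Appendix \ref{paraproductsapp} give $\tilde w_j\otimes\tilde w_j\in\tl^\infty([0,T^*];\dot B^{2s_{p_j}-3/p_j}_{p_j/2,\infty})$, and then the heat estimate for $B$ (gain of one derivative against an integrable $(t-s)^{-1/2}$, the time integral being harmless as $T^*<\infty$) yields $\tilde w_{j+1}\in\tl^\infty([0,T^*];\dot B^{s_{p_j/2}}_{p_j/2,\infty})=\mathscr{L}^\infty_{p_j/2,\infty}(T^*)$. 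The Bernstein embedding $\dot B^{s_{p_j/2}}_{p_j/2,\infty}\hookrightarrow\dot B^{s_q}_{q,\infty}$, valid for every $q\ge p_j/2$, then lets me set $p_{j+1}:=\max(1,p_j/2)$, which lies in $[1,3/2)$ and equals $1$ as soon as $p_j<3/2$ (in particular whenever $p_j<2$), giving $\tilde w_{j+1}\in\mathscr{L}^\infty_{p_{j+1},\infty}(T^*)$. For the other half, $\tilde w_{j+1}\in L^\infty(0,T^*;\bpp)$, I would not estimate $B(\tilde w_j,\tilde w_j)$ directly but instead observe that $(S)_j$ already forces $u\in L^\infty(0,T^*;\bpp)$: indeed $\tilde v_j\in\mathscr{L}^{r:\infty}_{p,p}(T^*)\subset\tl^\infty([0,T^*];\bpp)\subset L^\infty(0,T^*;\bpp)$ by (\ref{minkowski}), and $\tilde w_j\in L^\infty(0,T^*;\bpp)$ by $(S)_j$; hence $\tilde w_{j+1}=u-\tilde v_{j+1}\in L^\infty(0,T^*;\bpp)$ once $\tilde v_{j+1}$ is shown to lie in $\mathscr{L}^{r':\infty}_{p,p}(T^*)$.

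For $\tilde v_{j+1}=\etl u_0+B(\tilde v_j,\tilde v_j)+2B_\sigma(\tilde v_j,\tilde w_j)$, the linear term $\etl u_0$ lies in $\mathscr{L}^{1:\infty}_{p,p}(\infty)\cap\mathscr{K}_{p:\infty}(\infty)$ by the standard linear heat estimates (cf. Remark \ref{usefulembedding} for the Kato part), hence in $\mathscr{L}^{r':\infty}_{p,p}(T^*)\cap\mathscr{K}_{p:\infty}(T^*)$ for any admissible $r'$; and $B(\tilde v_j,\tilde v_j)$ is controlled by the bilinear estimates underpinning the local Cauchy theory, in $\mathscr{K}_q$ for $q\in[p,\infty)$ (where $p=3\cdot2^k-2$ being large keeps all relevant time exponents strictly integrable) and in $\mathscr{L}^{r':\infty}_{p,p}(T^*)$ — here only finiteness, not smallness, is needed since $\tilde v_{j+1}$ is built explicitly rather than by a contraction, so no smallness of $T^*$ is required. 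The genuinely delicate term is the cross term $2B_\sigma(\tilde v_j,\tilde w_j)$: $\tilde v_j$ is smoothing (it decays like $t^{-|s_q|/2}$ in each $L^q$, $q\in[p,\infty)$, and sits in $\mathscr{L}^{r:\infty}_{p,p}$ with positive regularity $s_p+2/r>0$) while $\tilde w_j$ is rough ($\bpp$, $s_p<0$) but only bounded, not decaying, in time. The plan is to split the product $\tilde v_j\otimes\tilde w_j$ into paraproducts: the low-high piece with $\tilde w_j$ spectrally low is absorbed using $\tilde w_j\in\dot B^{-1}_{\infty,\infty}$ (a uniform bound following from $\bpp$) against the Kato bound for $\tilde v_j$; the remaining pieces are handled by exploiting that the positive-regularity space $\mathscr{L}^\infty_{p_j,\infty}(T^*)$ places $\tilde w_j$, after a Bernstein embedding, at a scaling-critical level (e.g. in $\dot B^0_{3,\infty}$), which is enough room to pair against the mild time-singularity of $\tilde v_j$ and still produce, after the derivative gain in $B$, a function in $\mathscr{L}^{r':\infty}_{p,p}(T^*)\cap\mathscr{K}_{p:\infty}(T^*)$ for some $r'>2$ with $s_p+2/r'>0$. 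This is where the freedom to re-pick the exponent $r$ at each step of $(S)$ is used, and where the largeness of $p$ makes the window $2<r'<2/(1-3/p)$ nonempty.

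Finally the ``in particular'' follows by iterating the step: one application to $(S)_0$ gives $p_1=\max(1,p_0/2)\in[1,3/2)$; since $p_1<3/2<2$, a second application gives $p_2=1$, and thereafter $p_{j+1}=\max(1,1/2)=1$ for all $j\ge1$, so $\tilde w_j\in\mathscr{L}^\infty_{1,\infty}(T^*)$ for every $j\ge2$. I expect the main obstacle to be the cross term $2B_\sigma(\tilde v_j,\tilde w_j)$: balancing the time-singular smoothing of $\tilde v_j$ against the negative spatial regularity of $\tilde w_j$ without sacrificing integrability, and checking that the output still meets the positive-regularity constraint $s_p+2/r'>0$ and the Kato-space membership built into $(S)$.
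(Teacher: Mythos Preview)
Your structural outline matches the paper exactly: expand $B(u,u)$ bilinearly, read off $u=\tilde v_{j+1}+\tilde w_{j+1}$, get the $L^\infty(0,T^*;\bpp)$ bound on $\tilde w_{j+1}$ by subtraction $u-\tilde v_{j+1}$, and gain integrability on $\tilde w_{j+1}=B(\tilde w_j,\tilde w_j)$ by product-plus-heat. The ``in particular'' step is also handled the same way.

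You correctly isolate the cross term $B_\sigma(\tilde v_j,\tilde w_j)$ as the delicate point, but your paraproduct plan does not close the $\mathscr{K}_{p:\infty}$ estimate. The Kato norm is a genuine $L^q_x$ norm, and Besov calculus lands you in $\dot B^0_{q,\infty}$, not $L^q$; passing from $\dot B^0_{q,\infty}$ to $L^q$ costs a summing index you do not have. The paper's device here is specific and not what you propose: embed $\tilde w_j\in\mathscr{L}^\infty_{p_j,\infty}(T^*)\hookrightarrow L^\infty(0,T^*;L^{3,\infty})$ (Sobolev--Lorentz, since $p_j<3$ and $s_{p_j}>0$), and then apply O'Neil's convolution inequality for Lorentz spaces to the gradient heat kernel against $\tilde v_j\otimes_\sigma\tilde w_j\in L^{3\tilde q/(3+\tilde q),\infty}$, with $\tilde v_j$ measured in $\mathscr{K}_{\tilde q}$. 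That gives directly
\[
\|B_\sigma(\tilde v_j,\tilde w_j)(t)\|_{L^q}\lesssim t^{s_q/2}\,\|\tilde w_j\|_{L^\infty_t L^{3,\infty}}\,\|\tilde v_j\|_{\mathscr{K}_{\tilde q}}
\]
for every $3<q<\tilde q$, hence $B_\sigma(\tilde v_j,\tilde w_j)\in\mathscr{K}_{p:\infty}(T^*)$. Your $\dot B^{-1}_{\infty,\infty}$ and $\dot B^0_{3,\infty}$ route does not reach an honest $L^q$ bound.

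A smaller point: for $\tilde w_{j+1}$, you cannot simply multiply $\dot B^{s_{p_j}}_{p_j,\infty}\times\dot B^{s_{p_j}}_{p_j,\infty}\to\dot B^{2s_{p_j}}_{p_j/2,\infty}$, because both regularities are positive and the paraproduct $\mathcal{T}$ needs the low-frequency factor at negative regularity. The paper handles this by embedding one copy of $\tilde w_j$ into $\dot B^{s_q}_{q,\infty}$ with $q>3$ (so $s_q<0$), then pairs $s_{p_j}>0$ with $s_q<0$; this also forces the two-case split ($p_j\gtrless 3/2$) to keep $s_{p_{j+1}}\le 2$ within reach of the heat smoothing, and explains why $p_{j+1}$ is not literally $\max(1,p_j/2)$ but the exponent produced by that pairing.
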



\noindent
Postponing the proof of  Proposition \ref{propbbb} for the moment, let us proceed to prove Proposition~\ref{lemmaimprovedb}.
\\\\
{\bf Proof of Proposition~\ref{lemmaimprovedb}.} \quad
Recall $p=3\cdot 2^{k}-2$ for some integer $k\geq 2$.   Theorem \ref{expandprop} implies that~${(S)}_0$ of Proposition \ref{propbbb} is satisfied with
$$
  p_0 := \frac {6p}{2p+1}\, , \quad \tilde v_0:=   \sum_{n=0}^k u_{L,n} \, ,\quad \tilde w_0 :=   w_k \, .
$$
We can therefore  apply Proposition~\ref{propbbb} twice which gives
$$u =  \tilde v_2 + \tilde w_2$$
with
$$\tilde w_2 \in \mathscr{L}^\infty_{1,\infty}(0,T^*) \cap L^\infty(0,T^*; \bespp )
\subset  L^\infty(0,T^*; \dot B^{7/5}_{5/4,2^{k_{0}}})
\, ,$$
where~$k_{0}$ is chosen so that $2^{k_{0}}\geq 5p$. From $T^{*}<+\infty$ and  H\"older's inequality   we have
\begin{equation}
  \label{eq:12}
  \tilde w_2 \in \tl^{2^{k_0}}(0,T^*; \dot B^{{7/5}}_{ 5/4,2^{k_0}}) \subset
\tl^{2^{k_0}}(0,T^*; \dot B^{-2/5}_{5,2^{k_0}})\, .
\end{equation}
We now apply Proposition \ref{lemmaimprovedb} again, to get $u=\tilde v_{3}+\tilde w_{3}$. On the other hand, since $-2/5 < 0$ and $7/5 -2/5 = 1>0$, \eqref{eq:12} and  product estimates in Appendix \ref{paraproductsapp} therefore give
$$
\tilde w_3= B(\tilde w_2, \tilde w_2) \in \tl^{2^{k_0 -1}}(0,T^*;\dot B^{2}_{1,2^{k_0-1}}) \subset \tl^{2^{k_0-1}}(0,T^*; \dot B^{{7/5}}_{ 5/4,2^{k_0-1}})\, .$$
Applying Proposition  \ref{propbbb} and arguing as above $k_0$ times and defining~$\hat p $ by
$$
\frac1{\hat p}:= \frac 2{3p} + \frac 13
$$
and interpolating, we see that
$$\tilde w_{k_0 + 2} \in L^1(0,T^*;\dot B^{2}_{1,1}) \cap L^\infty(0,T^*;\bespp ) \subset L^3(0,T^*;\dot B^{s_{\hat p}}_{\hat p, \hat p}) \subset L^3(0,T^*;\ltrt)\, .$$
Setting $v:=  \tilde v_{k_0+2}$ and $w:=\tilde w_{k_0+2}$, Proposition  \ref{lemmaimprovedb}
follows from the above and Proposition \ref{propbbb}. \hfill $\Box$

\medskip
\noindent {\bf Proof of Proposition \ref{propbbb}. }
\quad   We start with
$$
u =  \etl u_0 + B(   \tilde v_j + \tilde w_j,   \tilde v_j + \tilde w_j)
$$
and we define
$$
 \tilde w_{j+1} := B( \tilde w_j , \tilde w_j) \, , \quad \tilde v_{j+1}: = u - \tilde w_{j+1} =  \etl u_0 +B_\sigma (\tilde w_j,   \tilde v_j)+ B(   \tilde v_j  ,   \tilde v_j  )
\, .
$$
 The fact that~$\tilde v_{j+1} \in \mathscr{L}^{r:\infty}_{  p,p}(T^*)$   is  a straightforward application of the estimates in Appendix~\ref{paraproductsapp}, so let us prove that~$\tilde v_{j+1} \in  \mathscr{K}_{p:\infty}(T^*)$.
First we notice that~$B( \tilde v_j,\tilde v_j )$ has the same properties as~$\tilde v_j$ thanks to~(\ref{estimateKr}), so we focus on~$B_{\sigma}( \tilde v_j,\tilde w_j )$.
Since~$ p_j < 3$  we have in particular $\tilde w_j  \subset L^\infty(0,T^*;L^{3,\infty})$. Then
denoting by~$G$ the gradient of the heat kernel we can write thanks to~\cite{oneil}, for any~$q$ such that~$3<q<\tilde q$ and $p\leq \tilde q <+\infty$, and defining~$1/q=1/\alpha+1/\tilde q+1/3-1$,
\begin{align*}
  \| B_{\sigma}( \tilde v_j,\tilde w_j )\|_{L^q} & \lesssim \int_{0}^{t}\| \frac 1 {(t-s)^{2}} G(\frac{\cdot}{\sqrt{t-s}}) \|_{L^{\alpha,1}} \|  \tilde v_j \otimes_\sigma \tilde w_j \|_{L^{3\tilde q/(3+\tilde q),\infty}} (s)\,ds\\
 & \lesssim \int_{0}^{t} \frac 1 {(t-s)^{2-3/(2\alpha)}} \|   \tilde v_j\|_{L^{\tilde q}} \| w\|_{L^{3,\infty}} (s)\,ds \, .
\end{align*}
 Therefore
\begin{align*}
  \| B_{\sigma}( \tilde v_j,\tilde w_j )\|_{q}  & \lesssim \int_{0}^{t} \frac 1 {(t-s)^{1-3(1/q-1/\tilde q)/2}} \frac 1 { s^{1/2-3/(2\tilde q)}} \,ds \| w\|_{\mathscr{L}^\infty_{  p_j,\infty}(T^*)} \|\tilde v_j \|_{\mathscr{K}_{p:\infty}}\\
 & \lesssim  \frac 1 {t^{1/2-3/(2q)}}  \| w\|_{\mathscr{L}^\infty_{  p_j,\infty}(T^*)}\|\tilde v_j \|_{\mathscr{K}_{p:\infty}}\,.
\end{align*}
Therefore, $B_{\sigma}(\tilde v_{j},\tilde w_{j})\in \mathscr{K}_{3:\infty}(T^*)\subset \mathscr{K}_{p:\infty}(T^*)$.
\medskip
\noindent
Now let us turn to~$\tilde w_{j+1}$.
We assume first that $3/p_{j}=1+2\eta$, with $0<2\eta<1$ (e.g. $3/2<p_{j}<3$). Set $3/q=1-\eta$, we have $w_{j}\in \mathscr{L}^\infty_{  p_j,\infty}(T^*)\subset\mathscr{L}^\infty_{  q,\infty}(T^*)$. Noticing that $s_{p_{j}}=2\eta$ and $s_{q}=-\eta$, we get $B(w_{j},w_{j})  \in \mathscr{L}^\infty_{ r,\infty}(T^*)$, with
$$
\frac 1 r =\frac 1 {p_{j}}+\frac 1 q \,,\,\,\text{ and }\,\, s_{r}=1+\eta<2\,.
$$
Next, assume that $3/p_{j}-1=1+\eta$ with $0<3\eta<1$: we still have $w_{j}\in \mathscr{L}^\infty_{  p_j,\infty}(T^*)\subset\mathscr{L}^\infty_{  q,\infty}(T^*)$, but $s_{p_{j}}=1+\eta$. Therefore, by product laws and heat estimates (see Appendix B) we get
$B(w_{j},w_{j})  \in \mathscr{L}^\infty_{ 1,\infty}(T^*)$ (notice that $s_{1}=1+1+\eta-\eta=2$) and Proposition~\ref{propbbb} is proved.    \hfill $\Box$

\subsection{The iteration procedure: proof of Theorem \ref{expandprop}}\label{iterationprocedure}
We formally define, for any vector field~$v$, the operator
\begin{equation}\label{LMopdef}
L[v](\, \cdot \, ):= {\rm{Id}} - 2B_\sigma(v, \cdot ) \, .
\end{equation}
Let us denote
$$
\mathcal{L}_{c}(X):=\{f:X \to X \, | \, f\, \textrm{is linear and bounded}\} \, .
$$
We also need to define ``source'' spaces, which correspond to spaces
where a source term for a Stokes equation would be placed. For
convenience, for a given space $X$, we denote by $\Delta X$ the
corresponding ``source'' space so that for example
$\|B(f,g)\|_X \lesssim \|\P\nabla \cdot (f\otimes g)\|_{\Delta X}$.  For our purposes, for \linebreak $1\leq a \leq b \leq \infty$ and $0<T\leq T^* \leq \infty$ we will define in view of (\ref{heatest})
$$
  \begin{aligned}
 \Delta{\mathscr L}^{a:b}_{p,q}(T)&\equiv  \Delta{\mathscr L}^{a}_{p,q}(T):= \tl^a((0,T);{\dot B^{s_{p}+2/a-2}_{p,q}}) \quad \textrm{for all}\ \ b\in [a,\infty]\, ,\\
\Delta{\mathscr L}^{a:b}_{p}(T)&:= \Delta {\mathscr L}^{a}_{p,p}(T)\, , \quad
\Delta \mathscr{L}^{a:b}_{p,q}[T<T^*]:=\bigcap_{0<T<T^*}\Delta \mathscr{L}^{a:b}_{p,q}(T)
\, .
\end{aligned}
$$
And finally, we set, for $6p/(p+3)<q\leq p$,
$$
  \begin{aligned}
    \mathscr{B}^{{1:\infty}}_{q/2}(T)& : = \{
z=\sum_{k,\text{finite}}\big (B_{\sigma}(f_k,g_k)+B_{\sigma}(\tilde f_k,\tilde g_k)\big) \, | \, f_k,g_k \in
\mathscr{L}^{1:\infty}_{q}(T)\,,\\
&\qquad \tilde f_k \in \mathscr{L}^{1:\infty}_{p}(T)
\,,\,\, \tilde g_k \in \mathscr{L}^{1:\infty}_{q/2}(T)\,, \,\text{with norm}
\,\,\|(\partial_{t}-\Delta) z\|_{\Delta \mathscr{L}^{1:\infty}_{q/2}(T)}\}\,,
\\
\mathscr{B}^{{1:\infty}}_{q/2}[T<T^{\star}]&:= \bigcap_{0<T<T^\star } \mathscr{B}^{{1:\infty}}_{q/2}(T)
\end{aligned}
$$
and
\begin{multline*}
    \mathscr{B}^{{\infty}}_{\frac{6p}{2p+1},\infty}(T) := \{
z=\sum_{k,\text{finite}} B_{\sigma}(f_k,g_k) \, | \, f_k \in
\mathscr{L}^{\infty}_{\frac{6p}{2p+1},\infty}(T)+\mathscr{L}^{1:\infty}_{p}(T)\,,\\
 g_k \in
\mathscr{L}^{\infty}_{\frac{6p}{2p+1},\infty}(T)\,, \,\text{with norm}
\,\,\| z\|_{ \mathscr{L}^{\infty}_{\frac{6p}{2p+1},\infty}(T)}\}\, .
\end{multline*}
Note that if $(\partial_{t}-\Delta)z=0$, then $z=0$ in view of its structure as a sum of Duhamel terms, justifying our choice of norm on $\mathscr{B}^{{1:\infty}}_{q/2}(T)$. Moreover, all bilinear terms in the definitions above are well-defined using the product rules and heat estimates from Appendix~\ref{paraproductsapp}, which also imply that $ \mathscr{B}^{{1:\infty}}_{q/2}(T) \hookrightarrow
\mathscr{L}^{1:\infty}_{q/2}(T)$.  Similarly, $  \mathscr{B}^{{\infty}}_{\frac{6p}{2p+1},\infty}(T) \hookrightarrow
\mathscr{L}^{\infty}_{\frac{6p}{2p+1},\infty}(T)$, hence the choice of that norm (which will later serve a different purpose from the previous one); finally, notice that~$s_{6p/(2p+1)}=1/2p>0$.
\\\\
The   following lemma, proved in Section~\ref{invertibilitysection},
shows when an operator of the form (\ref{LMopdef}) is well-defined and
satisfies a suitable a priori estimate.
\begin{lemma}[Invertibility in Besov spaces]\label{invertab}
Let  $3<p<+\infty$, $6p/(p+3)<q\leq p$,    $T>0$ and $v\in
\mathscr{L}^{1:\infty}_{p}(T)$. Then~$L[v]$ belongs to~$\mathcal{L}_{c}(  \mathscr{B}^{{1:\infty}}_{q/2}(T))
$  and is invertible,
 and we denote by~$K[v]$ its
inverse:  by construction of $K[v]$ we have the  identity
  \begin{equation}
    \label{eq:3}
    (\partial_{t}-\Delta_{v}) K[v]=\partial_{t}-\Delta\,,
  \end{equation}
with $\Delta_{v}:=\Delta-2\P\nabla \cdot(v\otimes_{\sigma}( \cdot))$.
Moreover, for $z \in \mathscr{B}^{{1:\infty}}_{\frac{6p}{2p+1}}(T)$,  if $K[v] z \in   L^{\infty}(0,T;\dot B^{s_p}_{p,\infty})$ then we have
\begin{equation}\label{estimateBrondinfini}
\|K[v]z\|_{\mathscr{B}^{{\infty}}_{\frac{6p}{2p+1},\infty}(T) }\leq C(v) \big(\|K[v]z\|_{L^{\infty}(0,T;\dot B^{s_p}_{p,\infty})}+ \|z\|_{\mathscr{B}^{{\infty}}_{\frac{6p}{2p+1},\infty}(T)}\big) \, .
\end{equation}\end{lemma}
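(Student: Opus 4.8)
The plan is to regard $L[v]=\mathrm{Id}-2B_\sigma(v,\cdot)$ as a perturbation of the identity, to invert it first on short time intervals by a Neumann series, and then on all of $[0,T]$ by a time–splitting and gluing argument, exactly in the spirit of the perturbation theory for (NS) recalled in Appendix~\ref{perturbationtheoryNS}. The continuity $L[v]\in\mathcal{L}_c(\mathscr{B}^{1:\infty}_{q/2}(T))$ is essentially built into the definition of the target space: for $z\in\mathscr{B}^{1:\infty}_{q/2}(T)\hookrightarrow\mathscr{L}^{1:\infty}_{q/2}(T)$, the term $B_\sigma(v,z)$ is itself one of the admissible bilinear expressions defining $\mathscr{B}^{1:\infty}_{q/2}(T)$ (with $v\in\mathscr{L}^{1:\infty}_{p}(T)$ in the role of $\tilde f_k$ and $z\in\mathscr{L}^{1:\infty}_{q/2}(T)$ in that of $\tilde g_k$), so it lies in $\mathscr{B}^{1:\infty}_{q/2}(T)$; and since $(\partial_t-\Delta)B_\sigma(v,z)=-\P\nabla\cdot(v\otimes_\sigma z)$, its norm there equals $\|\P\nabla\cdot(v\otimes_\sigma z)\|_{\Delta\mathscr{L}^{1:\infty}_{q/2}(T)}$, which the product and heat estimates of Appendix~\ref{paraproductsapp} bound by $\lesssim\|v\|_{\mathscr{L}^{1:\infty}_{p}(T)}\|z\|_{\mathscr{L}^{1:\infty}_{q/2}(T)}$. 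The restrictions $3<p$ and $6p/(p+3)<q\le p$ are exactly the ranges in which this product law closes.

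For the invertibility I would use that $v\in\mathscr{L}^{1:\infty}_{p}(T)\hookrightarrow\tl^{2}([0,T];\dot B^{s_p+1}_{p,p})$ and that, by dominated convergence in $\ell^{p}$, the $\tl^{2}$-in-time norm of $v$ over a subinterval $[a,b]$ tends to $0$ as $b-a\to 0$. Hence there is a partition $0=\tau_0<\dots<\tau_M=T$, with $M=M(\|v\|_{\mathscr{L}^{1:\infty}_{p}(T)})$, on each piece of which the operator $z\mapsto 2B_\sigma(v,z)$ has operator norm $\le\tfrac12$ on the corresponding space $\mathscr{B}^{1:\infty}_{q/2}$, so that $L[v]$ is invertible there by a Neumann series. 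To produce $K[v]z$ on all of $[0,T]$ one solves $w=z+2B_\sigma(v,w)$ successively: on $[\tau_0,\tau_1]$ directly, and on each $[\tau_{i-1},\tau_i]$ by writing $w=e^{(t-\tau_{i-1})\Delta}w(\tau_{i-1})+w'$ with $w'$ vanishing at $\tau_{i-1}$, so that $w'$ solves the same fixed-point equation with the source augmented by the known term $2\P\nabla\cdot(v\otimes_\sigma e^{(t-\tau_{i-1})\Delta}w(\tau_{i-1}))$, again solvable by Neumann series on that short interval; gluing the pieces gives $w=:K[v]z$ with $\|K[v]z\|_{\mathscr{B}^{1:\infty}_{q/2}(T)}\le C(M)\,\|z\|_{\mathscr{B}^{1:\infty}_{q/2}(T)}$. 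The resulting $w$ has vanishing Cauchy data at $t=0$, lies in $\mathscr{L}^{1:\infty}_{q/2}(T)$ by the same product estimates, and satisfies $w=z+2B_\sigma(v,w)$; hence it is again a finite sum of admissible bilinear terms and belongs to $\mathscr{B}^{1:\infty}_{q/2}(T)$. Uniqueness follows by running the same contraction on each subinterval.

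The identity (\ref{eq:3}) is then purely formal: applying $\partial_t-\Delta$ to $K[v]z-2B_\sigma(v,K[v]z)=z$ and using $(\partial_t-\Delta)B_\sigma(a,b)=-\P\nabla\cdot(a\otimes_\sigma b)$ for any such Duhamel term, one obtains $(\partial_t-\Delta)K[v]z+2\P\nabla\cdot(v\otimes_\sigma K[v]z)=(\partial_t-\Delta)z$, which is exactly $(\partial_t-\Delta_v)K[v]z=(\partial_t-\Delta)z$ with $\Delta_v=\Delta-2\P\nabla\cdot(v\otimes_\sigma(\cdot))$.

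Finally, for (\ref{estimateBrondinfini}) I would set $w:=K[v]z$, so that $w=z+2B_\sigma(v,w)$. The term $z$ already sits in $\mathscr{B}^{\infty}_{\frac{6p}{2p+1},\infty}(T)$ with the right norm; for $2B_\sigma(v,w)$ one must exhibit it as an admissible bilinear term there, i.e. with first entry in $\mathscr{L}^{\infty}_{\frac{6p}{2p+1},\infty}(T)+\mathscr{L}^{1:\infty}_{p}(T)$ and second entry in $\mathscr{L}^{\infty}_{\frac{6p}{2p+1},\infty}(T)$. This is where the standing hypothesis $K[v]z\in L^{\infty}(0,T;\dot B^{s_p}_{p,\infty})$ enters: interpolating this bound against the a priori $\mathscr{L}^{1:\infty}_{q/2}(T)$–control of $w$ from the previous step — here one uses $s_{6p/(2p+1)}=1/(2p)>0$ and the constraints on $p,q$ to land on the correct scale — and feeding the result into the heat and product estimates of Appendix~\ref{paraproductsapp}, gives $2B_\sigma(v,w)\in\mathscr{L}^{\infty}_{\frac{6p}{2p+1},\infty}(T)$ with norm $\lesssim C(v)\big(\|K[v]z\|_{L^{\infty}(0,T;\dot B^{s_p}_{p,\infty})}+\|z\|_{\mathscr{B}^{\infty}_{\frac{6p}{2p+1},\infty}(T)}\big)$, but only after a self-improving (bootstrap) step, again localized in time, in which the $B_\sigma(v,w)$ contribution is absorbed on short subintervals while the $z$- and $L^{\infty}_t\dot B^{s_p}_{p,\infty}$-contributions are carried through. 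I expect this last bootstrap to be the main obstacle, together with the delicate index bookkeeping in the underlying product laws — in particular exploiting the extra $\tl$-space summability to handle the factor $v\in\tl^{\infty}([0,T];\dot B^{s_p}_{p,p})$ (respectively the borderline factor in $\dot B^{3/p}_{p,p}$) at each application of the paraproduct estimates. Once these estimates are granted, the remaining steps are the routine perturbation-theory bookkeeping of Appendix~\ref{perturbationtheoryNS}.
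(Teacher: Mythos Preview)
Your treatment of the continuity and invertibility of $L[v]$ on $\mathscr{B}^{1:\infty}_{q/2}(T)$, and of the identity (\ref{eq:3}), is essentially the paper's argument: the paper packages your time-splitting Neumann series into the black box Proposition~\ref{pdr1lin}, obtains the $\mathscr{L}^{1:\infty}_{q/2}(T)$ bound on $w=K[v]z$ directly, and then recovers the $\mathscr{B}^{1:\infty}_{q/2}(T)$ structure from $w=z+2B_\sigma(v,w)$ exactly as you do. So that part is fine.

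The gap is in your proof of (\ref{estimateBrondinfini}). Two things go wrong. First, you interpolate the a priori hypothesis $w\in L^\infty(0,T;\dot B^{s_p}_{p,\infty})$ against the $\mathscr{L}^{1:\infty}_{q/2}(T)$ bound coming from the invertibility step; but that bound is controlled by $\|z\|_{\mathscr{B}^{1:\infty}_{q/2}(T)}$, which does \emph{not} appear on the right of (\ref{estimateBrondinfini}), so this cannot give the stated estimate. Second, the ``bootstrap localized in time'' you propose cannot close: the target norm is $L^\infty$ in time, and absorbing $B_\sigma(v,w)$ on short intervals would require making a norm of $v$ small while pairing it with $w$ in an $L^\infty_t$ norm at regularity $\geq s_{6p/(2p+1)}=1/(2p)>0$; the paraproduct rules then force $v$ to sit at negative regularity, hence in $\tl^r$ with $r$ large, where its norm does \emph{not} shrink on short intervals.

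The paper's mechanism is different and avoids time-splitting entirely for this step. Starting from $w=z+2B_\sigma(v,w)$, one estimates
\[
\|w\|_{\mathscr{L}^{\infty}_{\frac{6p}{2p+1},\infty}}
\lesssim \|z\|_{\mathscr{L}^{\infty}_{\frac{6p}{2p+1},\infty}}
+\|v\|_{\mathscr{L}^{1:\infty}_p}\,\|w\|_{\mathscr{L}^{\infty}_{\frac{6p}{2p-5},\infty}}\,,
\]
placing $w$ at the \emph{negative} regularity $s_{6p/(2p-5)}=-5/(2p)$, and then interpolates
\[
\|w\|_{\mathscr{L}^{\infty}_{\frac{6p}{2p-5},\infty}}
\lesssim
\|w\|_{\mathscr{L}^{\infty}_{p,\infty}}^{\,6/(2p-5)}\,
\|w\|_{\mathscr{L}^{\infty}_{\frac{6p}{2p+1},\infty}}^{\,1-6/(2p-5)}\,.
\]
Since $1-6/(2p-5)<1$, the right-hand side is \emph{sublinear} in the unknown $\|w\|_{\mathscr{L}^{\infty}_{\frac{6p}{2p+1},\infty}}$, and one absorbs it by Young's inequality, obtaining (\ref{estimateBrondinfini}) directly. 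The correct interpolation is thus between the a priori $\mathscr{L}^{\infty}_{p,\infty}$ control and the \emph{target} norm itself---not the $\mathscr{L}^{1:\infty}_{q/2}$ norm---and the sublinearity replaces any need for time-localization.
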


\pagebreak

\noindent We shall now proceed to prove each part of Theorem~\ref{expandprop} separately.
\\\\
{\bf Proof of Theorem \ref{expandprop}, Part (I).}
Let us prove the following statement.
\begin{prop}\label{ajoutisa}
  Assume $u_{0}\in\dot B^{s_p}_{p,p}$, $u=NS(u_{0})\in
  \mathscr{L}^{1:\infty}_{p}[T<T^{*}]$. Then for $k$ such that $p=
  3.2^{k}-2$, there exists some $u_{L,n}\in
\mathscr{L}^{1:\infty}_{p/2^{n}}(T^{*})$  for each $0\leq n\leq k$ with
\begin{equation}\label{linearcontrol}
\|u_{L,n}\|_{ \ln{1:\infty}_{\frac{p}{2^n},\frac{p}{2^n}}(T^*)}  \lesssim C(\|u_0\|_{ \dot B^{s_p}_{p,p}}) \, ,
\end{equation}
 such that
  \begin{equation}
    \label{eq:4}
    u=\sum_{n=0}^{k} u_{L,n}+w_{k} \,\, \text{ in} \,\,  \mathscr{L}^{1:\infty}_{p}[T<T^{*}]
  \end{equation}
Moreover, $w_{k}$  satisfies, in~$\mathscr B^{1:\infty}_{p/2^{k}}
[T<T^*]\subset \mathscr L^{1:\infty}_p [T<T^*]$,
\begin{equation}
  \label{eq:5}
  w_{k}=K[u^{(k)}_{L}] \big(B(u_{L,k},u_{L,k})+2B_{\sigma}(u_{L,k},w_{k})+B(w_{k},w_{k})\big) \, ,
\end{equation}
 where  $u^{(k)}_{L}:=\sum_{n=0}^{k-1 }u_{L,n}$.
\end{prop}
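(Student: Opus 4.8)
\emph{Strategy.} The plan is to run the Duhamel expansion of $u=NS(u_0)$ level by level, peeling off at each step one copy of $e^{t\Delta}u_0$ by inverting the perturbed Stokes operator $L[\,\cdot\,]$ of Lemma~\ref{invertab}; the arithmetic identity $p=3\cdot2^{k}-2$ is exactly what keeps all the relevant spaces $\mathscr B^{1:\infty}_{p/2^{n}}$ inside the admissible window $6p/(p+3)<p/2^{n-1}\le p$ for $1\le n\le k$. Write throughout $u^{(n)}_L:=\sum_{m=0}^{n-1}u_{L,m}$, so that $u^{(k)}_L$ is as in the statement. First set $u_{L,0}:=e^{t\Delta}u_0\in\mathscr L^{1:\infty}_{p}(\infty)$, which obeys (\ref{linearcontrol}) for $n=0$ by (\ref{dataheatest}). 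For $1\le n\le k$, having built $u_{L,0},\dots,u_{L,n-1}$ with $u_{L,m}\in\mathscr L^{1:\infty}_{p/2^{m}}(T^*)$, note $u^{(n)}_L\in\mathscr L^{1:\infty}_{p}(T^*)$ (each summand embeds into $\mathscr L^{1:\infty}_{p}$ by Bernstein, all these being critical spaces for the Navier--Stokes scaling), and define $u_{L,n}:=K[u^{(n)}_L]\bigl(B(u_{L,n-1},u_{L,n-1})\bigr)$. Since $u_{L,n-1}\in\mathscr L^{1:\infty}_{p/2^{n-1}}$, the source $B(u_{L,n-1},u_{L,n-1})$ is of the form $B_\sigma(f,g)$ with $f,g\in\mathscr L^{1:\infty}_{p/2^{n-1}}$, hence lies in $\mathscr B^{1:\infty}_{p/2^{n}}$ (with index $q=p/2^{n-1}$, which is $>6p/(p+3)$ for $n\le k$ precisely because $p=3\cdot2^{k}-2$). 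Lemma~\ref{invertab} then gives $u_{L,n}\in\mathscr B^{1:\infty}_{p/2^{n}}[T<T^*]\subset\mathscr L^{1:\infty}_{p/2^{n}}(T^*)$ with $\|u_{L,n}\|\le C\bigl(\|u^{(n)}_L\|_{\mathscr L^{1:\infty}_{p}}\bigr)\|u_{L,n-1}\|_{\mathscr L^{1:\infty}_{p/2^{n-1}}}^{2}$ (the last factor by the product/heat estimates of Appendix~\ref{paraproductsapp}); iterating from $n=1$ to $k$ yields (\ref{linearcontrol}) with a constant depending only on $\|u_0\|_{\dot B^{s_p}_{p,p}}$, since every $u^{(n)}_L$ is built from $u_0$ alone and $\|K[v]\|$ in Lemma~\ref{invertab} is controlled by $\|v\|_{\mathscr L^{1:\infty}_{p}(T)}$ uniformly in $T$ (using that the tail of $e^{t\Delta}u_0$ is small). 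Finally put $w_k:=u-\sum_{n=0}^{k}u_{L,n}$, which is (\ref{eq:4}).

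\emph{Equation for $w_k$.} Set $v_n:=u-\sum_{m=0}^{n}u_{L,m}$, so $w_k=v_k$. One checks by induction on $0\le n\le k$ the identity
\[
L[u^{(n+1)}_L]\,v_n=B(u_{L,n},u_{L,n})+B(v_n,v_n) \qquad (\star_n),
\]
the case $n=0$ being the Duhamel identity (\ref{NSduhamel}) expanded about $u_{L,0}$, i.e.\ $v_0=B(u_{L,0}+v_0,u_{L,0}+v_0)=B(u_{L,0},u_{L,0})+2B_\sigma(u_{L,0},v_0)+B(v_0,v_0)$; and the step $n-1\to n$ following by subtracting $L[u^{(n)}_L]u_{L,n}=B(u_{L,n-1},u_{L,n-1})$ (the defining relation of $u_{L,n}$) from $(\star_{n-1})$, using $v_n=v_{n-1}-u_{L,n}$, re-expanding $B(v_{n-1},v_{n-1})=B(v_n+u_{L,n},v_n+u_{L,n})$, and noting $L[u^{(n)}_L](\,\cdot\,)-2B_\sigma(u_{L,n},\,\cdot\,)=L[u^{(n+1)}_L](\,\cdot\,)$. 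For the stated form (\ref{eq:5}), combine $(\star_{k-1})$ with $L[u^{(k)}_L]u_{L,k}=B(u_{L,k-1},u_{L,k-1})$ to get $L[u^{(k)}_L]w_k=B(v_{k-1},v_{k-1})$, expand $v_{k-1}=w_k+u_{L,k}$, and invert $L[u^{(k)}_L]$; this inversion is legitimate on $\mathscr B^{1:\infty}_{p/2^{k-1}}$ by Lemma~\ref{invertab} since the corresponding index $q=p/2^{k-2}$ lies in $(6p/(p+3),p]$ for $k\ge2$, which is again where $p=3\cdot2^{k}-2$ enters.

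\emph{Regularity of $w_k$.} The last point is to upgrade $w_k=v_k$ from its a priori membership in $\mathscr L^{1:\infty}_{p}[T<T^*]$ to $\mathscr B^{1:\infty}_{p/2^{k}}[T<T^*]$; this is a finite bootstrap along the $v_n$. Inductively, $v_n=v_{n-1}-u_{L,n}\in\mathscr B^{1:\infty}_{p/2^{n-1}}$ (base case $v_0=B(u,u)\in\mathscr B^{1:\infty}_{p/2}$ from $u\in\mathscr L^{1:\infty}_{p}[T<T^*]$, and $u_{L,n}\in\mathscr B^{1:\infty}_{p/2^{n}}\subset\mathscr B^{1:\infty}_{p/2^{n-1}}$), hence $v_n\in\mathscr L^{1:\infty}_{p/2^{n-1}}$; consequently the right side of $(\star_n)$ already lies in $\mathscr B^{1:\infty}_{p/2^{n}}$, the first term because $u_{L,n}\in\mathscr L^{1:\infty}_{p/2^{n}}$ and the second because $v_n\in\mathscr L^{1:\infty}_{p/2^{n-1}}$. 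Applying the bounded operator $K[u^{(n+1)}_L]\colon\mathscr B^{1:\infty}_{p/2^{n}}\to\mathscr B^{1:\infty}_{p/2^{n}}$ to $(\star_n)$ and using $K[u^{(n+1)}_L]L[u^{(n+1)}_L]=\mathrm{Id}$ on $\mathscr B^{1:\infty}_{p/2^{n-1}}$ (Lemma~\ref{invertab}) improves $v_n$ to $\mathscr B^{1:\infty}_{p/2^{n}}$; for $n=k$ this gives $w_k\in\mathscr B^{1:\infty}_{p/2^{k}}[T<T^*]$, and the inclusions $\mathscr B^{1:\infty}_{p/2^{k}}\subset\mathscr L^{1:\infty}_{p/2^{k}}\subset\mathscr L^{1:\infty}_{p}$ (Appendix~\ref{paraproductsapp} and Bernstein) complete the proof.

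\emph{Expected main difficulty.} Granting Lemma~\ref{invertab} (proved in Section~\ref{invertibilitysection}) the argument is essentially bookkeeping, but the bookkeeping is the delicate part: at every level one must simultaneously keep the drift $u^{(n)}_L$ in the critical space $\mathscr L^{1:\infty}_{p}$, check that the term just produced genuinely gains one dyadic step of integrability (this is the bilinear $L^{p/2^{n}}\!\to L^{p/2^{n+1}}$ improvement already met in~(\ref{estimateZ2})), and ensure that the index $q=p/2^{n-1}$ never leaves the window $(6p/(p+3),p]$ of Lemma~\ref{invertab}. It is precisely this last constraint that forces $p=3\cdot2^{k}-2$ and that makes both the expansion and the bootstrap for $w_k$ terminate after exactly $k$ steps; one must also invoke the $T$-uniformity of the bound in Lemma~\ref{invertab} to get the constant in (\ref{linearcontrol}) to depend on $\|u_0\|_{\dot B^{s_p}_{p,p}}$ alone.
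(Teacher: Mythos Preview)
Your argument is correct and follows essentially the same route as the paper: the definition $u_{L,n}:=K[u^{(n)}_L]B(u_{L,n-1},u_{L,n-1})$ coincides with the paper's construction, and your identity $(\star_n)$ is equivalent to the paper's relation $w_n=K[u^{(n+1)}_L]z_n$ with $z_n=B(u_{L,n},u_{L,n})+B(w_n,w_n)$. The only organizational difference is that you separate the algebraic identity $(\star_n)$ from the regularity bootstrap for $v_n$, whereas the paper carries both along in a single induction on $w_n$; be aware that your ``Equation for $w_k$'' paragraph tacitly uses $w_k\in\mathscr B^{1:\infty}_{p/2^{k-1}}$ to justify the inversion, which is only furnished by the bootstrap in the next paragraph, so logically the bootstrap should precede (or be interleaved with) the inversion step.
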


\noindent {\bf Proof of Proposition \ref{ajoutisa}.} \quad  We start by writing
$$
u=u_L+ w \,  , \quad \mbox{with} \quad u_L:=e^{t\Delta}u_0 \quad
\mbox{and}\quad
w:=B(u,u) \, .
$$
Then we define  $u_{L,0}:=u_L$  which clearly satisfies the requirements of the proposition for $n=0$, and we set $u_L^{(0)}:=0$. We also set $w_0:=w$
and obviously ~$w_0\in \mathscr{B}^{1:\infty}_{p}[T<T^*] \hookrightarrow \mathscr{L}^{1:\infty}_p[T<T^*]$. We  notice
moreover that
$$
K[u_L^{(0)}  ] = L[u_L^{(0)}  ] = \rm{Id} \, ,
$$
and therefore $w_{0}$ satisfies
$$
  w_{0}=K[u^{(0)}_{L}] \big(B(u_{L,0},u_{L,0})+2B_{\sigma}(u_{L,0},w_{0})+B(w_{0},w_{0})\big)  \quad \mbox{in} \quad \mathscr{B}^{1:\infty}_{p}[T<T^*]\, .
$$
We now proceed by induction.  Assume that for some~$0 \leq n \leq k-1$,   there exist~$u_{L,j}$
in~$\mathscr{L}^{1:\infty}_{p/2^j} $ satisfying~(\ref{linearcontrol}) for all $0\leq j\leq n$ and $w_n\in \mathscr{B}^{1:\infty}_{p/2^{n}}[T<T^*]$ such that, if we set (for $n\geq 1$) $u_L^{(n)}:=\sum_{j=0}^{n-1} u_{L,j} \in   \mathscr{L}^{1:\infty}_{p}(T^*)$,
\begin{itemize}
\item we have, in $\mathscr{L}^{1:\infty}_{p}[T<T^*]$,
$$
u = \sum_{j=0}^n u_{L,j} + w_n \, ;
$$
\item  $w_n$  is such that, in $\mathscr{B}^{1:\infty}_{p/2^{n}}[T<T^*]
  \subset \mathscr{L}^{1:\infty}_{p}[T<T^*]$,
$$
w_n = K[u_L^{(n)}  ] \big (B(u_{L,n} ,u_{L,n} ) + 2B_{\sigma} (u_{L,n}
,w_n) + B(w_n,w_n) \big)\, .
$$
\end{itemize}
We then define
\begin{equation}\label{defzn}
z_n:=B(u_{L,n} ,u_{L,n} ) + B(w_n,w_n)
\end{equation}
and we notice that of course
\begin{equation}\label{Kunzn}
K[u_L^{(n)}  ]z_n = w_n -2 K[u_L^{(n)}  ]B_{\sigma} (u_{L,n} ,w_n) \, .
\end{equation}
Note that this identity makes sense because by definition~$z_n$ belongs
to~${\mathscr{B}}^{1:\infty}_{p/2^{n}}[T<T^*]$. Then by Lemma~\ref{invertab} we have
$$
\begin{aligned}
(\partial_t - \Delta) z_n & = (\partial_t - \Delta_{u_L^{(n)}})
K[u_L^{(n)}  ]z_n \\
  & =  (\partial_t - \Delta_{u_L^{(n)} + u_{L,n}}) w_n
\end{aligned}
$$
thanks to~(\ref{Kunzn}). Now we define
$$
u_L^{(n+1)}:= u_L^{(n)}  + u_{L,n} \in  \mathscr{L}^{1:\infty}_{p}
$$
and by construction we have
$$
w_n:= K[u_L^{(n+1)}] z_n \,  ,
$$
and by~(\ref{defzn}) we have
$$
K[u_L^{(n+1)}] z_n =K[u_L^{(n+1)}] \big(B(u_{L,n} ,u_{L,n} ) +
B\big(K[u_L^{(n+1)}] z_n,K[u_L^{(n+1)}] z_n\big)
\big) \, ,
$$
which actually makes sense not only in~$\mathscr B^{1:\infty}_{\frac
  p{2^{n}}}[T<T^*]$ but also  in~$\mathscr B^{1:\infty}_{\frac p{2^{n+1}}}[T<T^*]$ (notice that even at the last step, $q/2=p/2^{k}$ is such that $q/2>3p/(p+3)$). Finally setting
$$
u_{L,n+1} := K[u_L^{(n+1)}]  B(u_{L,n} ,u_{L,n} )
$$
we have that~$u_{L,n+1}  \in  \mathscr{L}^{1:\infty}_{p/2^{n+1}} $, and
defining
$$
w_{n+1}:=K[u_L^{(n+1)}] B\big(K[u_L^{(n+1)}] z_n,K[u_L^{(n+1)}] z_n\big)
$$
we have~$ w_{n+1} \in  \mathscr{B}^{1:\infty}_{p/2^{n+1}}[T<T^*] $ and
$$
K[u_L^{(n+1)}] z_n = u_{L,n+1} +w_{n+1}
$$
so finally
$$
w_{n+1}
= K[u_L^{(n+1)}  ] \big (B(u_{L,n+1} ,u_{L,n+1} ) + 2B_{\sigma}
(u_{L,n+1} ,w_{n+1}) + B(w_{n+1},w_{n+1}) \big)
  \quad \mbox{in} \quad \mathscr{B}^{1:\infty}_{p/2^{n+1}}[T<T^*] \, ,
$$
which closes the induction. As previously observed, we may iterate as long as $p/2^{n}>3p/(p+3)$, which stops when $n=k$, providing the desired decomposition. \qed

\medskip
\noindent
{\bf Proof of Theorem \ref{expandprop}, Part (II).} \quad  In view of (\ref{eq:2}), Part (II) of Theorem \ref{expandprop} is  a consequence of the next lemma: it suffices to follow the construction leading to  Part (I) above and to use  Appendix~\ref{paraproductsapp}; the details are left to the reader.

\begin{lemma}[Invertibility in Kato spaces]\label{invertkato}
Assume $T>0$, $p>3$ and $v\in \mathscr{L}^{1:\infty}_{p}(T) \cap
\ck^{1}_{p}(T)$. Then~$L[v]$ belongs to~$
\mathcal{L}_{c}(\mathscr{B}^{1:\infty}_{p/2}(T) \cap
\ck^{1}_{p}(T))$ and moreover, $L[v]$ is invertible on that space.
\end{lemma}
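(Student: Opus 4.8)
\noindent The plan is to reduce to Lemma~\ref{invertab}. That lemma already asserts that $L[v]$ is bounded and invertible on the larger space $\mathscr{B}^{1:\infty}_{p/2}(T)$ (the case $q=p$, legitimate since $p>3$), with bounded inverse $K[v]$; it therefore suffices to show that, under the additional hypothesis $v\in\ck^1_p(T)$, both $L[v]$ and $K[v]$ map the smaller space
$$X:=\mathscr{B}^{1:\infty}_{p/2}(T)\cap\ck^1_p(T)$$
boundedly into itself, for then $L[v]^{-1}=K[v]|_X$ is a bounded operator on $X$. Since $L[v]z=z-2B_\sigma(v,z)$, boundedness of $L[v]$ on $X$ amounts to the bilinear bound $\|B_\sigma(v,z)\|_X\lesssim(\|v\|_{\mathscr{L}^{1:\infty}_p(T)}+\|v\|_{\ck^1_p(T)})\|z\|_X$. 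On the $\mathscr{B}^{1:\infty}_{p/2}$-component this is immediate from the very definition of that space: for $z\in\mathscr{B}^{1:\infty}_{p/2}(T)\hookrightarrow\mathscr{L}^{1:\infty}_{p/2}(T)$ and $v\in\mathscr{L}^{1:\infty}_p(T)$, the term $B_\sigma(v,z)$ is an admissible summand, and its norm $\|\P\nabla\cdot(v\otimes_\sigma z)\|_{\Delta\mathscr{L}^{1:\infty}_{p/2}(T)}$ is controlled by the product and heat estimates of Appendix~\ref{paraproductsapp}. On the $\ck^1_p$-component it follows from a Kato-type bilinear estimate obtained by inserting the heat-kernel bounds of Appendix~\ref{paraproductsapp} into the Duhamel integral defining $B_\sigma$ and book-keeping the scaling weights; the inputs on the right-hand side may be taken in $\ck^1_p(T)$ and $\ck_\infty(T)$, and the latter norms are finite since $v\in\mathscr{L}^{1:\infty}_p(T)\cap\ck^1_p(T)$ and $z\in\mathscr{B}^{1:\infty}_{p/2}(T)\cap\ck^1_p(T)$ both embed into $L^\infty(0,T;\dot B^{s_p}_{p,\infty})\cap\ck^1_p(T)\hookrightarrow\ck_\infty(T)$ by Besov embeddings and the interpolation inequalities of Remark~\ref{usefulembedding}.

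\noindent It remains to show $K[v](X)\subset X$. Fix $z\in X$ and set $w:=K[v]z\in\mathscr{B}^{1:\infty}_{p/2}(T)$; by construction of $K[v]$, $w$ solves the fixed-point equation $w=z+2B_\sigma(v,w)$, and we must upgrade $w$ to $\ck^1_p(T)$. As $v$ need not be small in a scale-invariant norm, a global Neumann series is unavailable; instead we partition $[0,T]$ into finitely many intervals $0=T_0<T_1<\cdots<T_M=T$ on each of which $2B_\sigma(v,\cdot)$ has operator norm $<\tfrac12$ in the relevant space, solve $w=z+2B_\sigma(v,w)$ there by a Neumann series, and patch. Near the origin, the required smallness on $[0,\delta]$ is obtained by writing $v=e^{t\Delta}v(0)+(v-e^{t\Delta}v(0))$: the first term satisfies $\|e^{\cdot\Delta}v(0)\|_{\ck_\infty(0,\delta)}+\|e^{\cdot\Delta}v(0)\|_{\ck^1_p(0,\delta)}\to0$ as $\delta\to0$ because Schwartz functions are dense in $\dot B^{s_p}_{p,p}$ (here $p<\infty$ is used), while the Duhamel remainder $v-e^{t\Delta}v(0)$, being bilinear in $v$, has its relevant norm over $(0,\delta)$ bounded by the finite-exponent components $\tl^{r}((0,\delta);\dot B^{s_p+2/r}_{p,p})$, $r<\infty$, of the $\mathscr{L}^{1:\infty}_p$-norm of $v$, which tend to $0$ by dominated convergence. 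Away from the origin the weight $t^{1-3/(2p)}$ in the $\ck^1_p$-norm is comparable to a constant, so on $[\delta,T]$ the relevant space is $L^\infty([\delta,T];\dot B^1_{p,\infty})$, and one again subdivides into small intervals on which the operator norm is $<\tfrac12$, the smallness coming from the dominated-convergence absolute continuity of the finite-exponent norms of $v\in\mathscr{L}^{1:\infty}_p(T)$. Patching the finitely many pieces---carrying each value $w(T_i)$ forward as an element of the appropriate data space and absorbing the bilinear contribution of $[0,T_i]$ into the forcing---yields $w\in\ck^1_p(T)$; since $w\in\mathscr{B}^{1:\infty}_{p/2}(T)$ is already known and the fixed point of $w\mapsto z+2B_\sigma(v,w)$ is unique in $\mathscr{B}^{1:\infty}_{p/2}(T)$, this $w$ is indeed $K[v]z$, so $K[v](X)\subset X$.

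\noindent The main obstacle is the last paragraph: one must make the Kato-space bilinear estimate quantitative enough to drive the partition argument, and in particular correctly identify the source of smallness on short time intervals---density of Schwartz functions in $\dot B^{s_p}_{p,p}$ for the behaviour near $t=0$, and absolute continuity of the finite-exponent norms elsewhere---and then transport the $\ck^1_p$-regularity across the gluing points, where the natural data space is no longer the scale-invariant one. Everything else is routine book-keeping with Appendix~\ref{paraproductsapp}, along the lines of the proof of Lemma~\ref{invertab}.
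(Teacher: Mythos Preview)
Your reduction to Lemma~\ref{invertab} for the $\mathscr{B}^{1:\infty}_{p/2}$-part, and your outline of the $\ck^1_p$-bilinear estimate for the boundedness of $L[v]$, are in line with the paper. The paper makes the bilinear estimate precise by splitting the Duhamel integral at $t/2$: the contribution from $[0,t/2]$ is rewritten as $e^{(t/2)\Delta}B_\sigma(v,f)(t/2)$ and handled via heat smoothing and the usual product rules, while the $[t/2,t]$ piece uses the crude bound $\|v\otimes_\sigma f\|_{L^p}\le\|v\|_{L^\infty}\|f\|_{L^p}$ together with the interpolation inequalities of Remark~\ref{usefulembedding}. The result is
$$\|B_\sigma(v,f)\|_{\ck^1_p(T)}\lesssim\|v\|_{\mathscr{L}^{1:\infty}_{p,\infty}(T)\cap\ck^1_p(T)}\,\|f\|_{\cl^\infty_{p,\infty}(T)\cap\ck^1_p(T)}\,.$$

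Your invertibility argument, however, has a genuine gap. In the step where you decompose $v=e^{t\Delta}v(0)+(v-e^{t\Delta}v(0))$ and assert that the second piece, ``being bilinear in $v$'', has small Kato-type norm on $(0,\delta)$, you are tacitly assuming that $v$ is itself a Navier--Stokes (or at least a Duhamel-type) solution. The lemma only assumes $v\in\mathscr{L}^{1:\infty}_p(T)\cap\ck^1_p(T)$; for a generic such $v$ the remainder $v-e^{t\Delta}v(0)$ carries no bilinear structure whatsoever, and there is no evident reason its scale-invariant Kato norms over $(0,\delta)$ should be small. The finite-exponent norms $\tl^r$, $r<\infty$, of $v$ do shrink on short intervals, but you have not shown that \emph{those} control the operator norm of $B_\sigma(v,\cdot)$ on $\ck^1_p$---and the very estimate you sketch for boundedness feeds on $\ck_\infty$ and $\ck^1_p$ norms of $v$, which are scale-invariant and do not become small. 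Without that smallness, the Neumann-series step on $[0,\delta]$ fails and the partition argument collapses.

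The paper avoids this difficulty by a different, simpler mechanism. Tracking the $t/2$-split estimate more carefully (with $f=w$, and using Remark~\ref{usefulembedding} to interpolate $\|w(s)\|_{L^p}$) gives
$$\|B_\sigma(v,w)\|_{\ck^1_p(T)}\lesssim\|v\|_{\mathscr{L}^{1:\infty}_p(T)}\Big(\|w\|_{L^\infty(0,T;\dot B^{s_p}_{p,\infty})}^{\frac{p}{2p-3}}\|w\|_{\ck^1_p(T)}^{\frac{p-3}{2p-3}}+\|w\|_{L^\infty(0,T;\dot B^{s_p}_{p,\infty})}\Big)\,.$$
The exponent $\tfrac{p-3}{2p-3}$ on $\|w\|_{\ck^1_p(T)}$ is strictly less than $1$. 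Since $\|w\|_{L^\infty(0,T;\dot B^{s_p}_{p,\infty})}$ is already controlled by the $\mathscr{B}^{1:\infty}_{p/2}$-invertibility of Lemma~\ref{invertab} (via the bound~(\ref{curlybappref})), inserting this into $w=z+2B_\sigma(v,w)$ yields a closed a~priori estimate on $\|w\|_{\ck^1_p(T)}$ directly---no smallness of $v$, no time partition, no Neumann series. This sublinear-exponent trick is the point you are missing.
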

\noindent
We postpone the proof of this lemma to Section~\ref{proofinvertkato}.
\\\\
{\bf Proof of Theorem \ref{expandprop}, Part (III).}\quad
Part (III) of Theorem \ref{expandprop}   follows from getting an a
priori estimate on the following equation up to time $T^{*}$: recall from~(\ref{eq:5})
that
$$
 w_{k}=K[u^{(k)}_{L}] \big(B(u_{L,k},u_{L,k})+2B_{\sigma}(u_{L,k},w_{k})+B(w_{k},w_{k})\big)
  $$
 in~$\mathscr{B}^{1:\infty}_{p/2^{k} }[T<T^*] $.
\\\\
Unfortunately the continuity properties of $K[\cdot]$ are restricted: the integrability range allows only for $q/2>3p/(p+3)$, and we cannot close an estimate directly on $w_{k}$, which would require $q/2\sim 3/2$ to balance the a priori bound with a large $p$. However, if we
define $\nu_{k}$ such that $w_{k}=K[u^{(k)}_{L}] \nu_{k}$, we replace
the previous equation on $w_{k}$ by
\begin{equation}
  \label{eq:10}
   \nu_{k}= B(u_{L,k},u_{L,k})+2B_{\sigma}\big(u_{L,k},K[u^{(k)}_{L}]\nu_{k}\big)+B\big (K[u^{(k)}_{L}]\nu_{k},K[u^{(k)}_{L}]\nu_{k}\big)\, ,
\end{equation}
and the equation still holds in ~$\mathscr{B}^{1:\infty}_{p/2^{k} }[T<T^*] $.
Now, notice that $p$ was chosen such that $s_{2^{-k}p}=2/p$, as $p=3\cdot 2^{k}-2$ so
in other words, $p/2^{k}=3-2^{1-k}$. By construction, both $u_{L,k}$
and $w_{k}$ are in $\mathscr{L}^{1:\infty}_{3p/(p+1)}[T<T^*]$ (which corresponds to regularity
$s_{3p/(p+1)}=1/p$). One easily checks that $B(u_{L,k},u_{L,k})\in
\mathscr{L}^{\infty}_{\frac{3p}{2p-1} ,\infty}(T)$ (corresponding to regularity
$1-1/p$) and so do the remaining two terms on the righthand side of
\eqref{eq:10}. Therefore, we have that $\nu_{k}$ belongs to
$\mathscr{L}^{\infty}_{\frac{3p}{2p-1} ,\infty}(T)$, and we seek to
estimate this norm uniformly in $T<T^{*}$. Let us deal with the bilinear
term:
 we estimate each term of $K[u^{(k)}_{L}] \nu_{k}(=w_{k})$ in~$\mathscr{L}^{\infty}_{\frac{6p}{2p+1},\infty}$. Notice that $w_{k} \in L^{\infty}(0,T^{*}; \dot B^{s_{p}}_{p,p}) \hookrightarrow \mathscr{L}^{\infty}_{p,\infty}(T^{*})
$ from its very definition \eqref{eq:4} together with \eqref{linearcontrol}. Note also that $\frac{2p-1}{3p} < \frac{2p+1}{6p} + \frac 13$ and  recall that $s_{6p/(2p+1)}=1/2p>0$, hence we can estimate (crucially using \eqref{estimateBrondinfini} in this first step)
\begin{align*}
 \big \| B(K[u^{(k)}_{L}] \nu_{k},K[u^{(k)}_{L}]\nu_{k})\big\|_{\mathscr{L}^{\infty}_{\frac{3p}{2p-1} ,\infty}(T) } &   \lesssim
  \big\|K[u^{(k)}_{L}]\nu_{k}\big\|^{2}_{\mathscr{L}^{\infty}_{\frac{6p}{2p+1},\infty}(T)
  } \\
 &   \lesssim
  C^{2}(u^{(k)}_{L}) \big( \|\nu_{k}\|^{2}_{\mathscr{L}^{\infty}_{\frac{6p}{2p+1},\infty}(T) }+\|w_k\|^{2}_{\mathscr{L}^{\infty}_{p,\infty}(T)}\big) \\
 & \lesssim   C^{2}(u^{(k)}_{L})
 \big( \|\nu_{k}\|^{2\theta}_{\mathscr{L}^{\infty}_{\frac{3p}{2p-1} ,\infty} (T) } \big\|\nu_{k}\big\|^{2(1-\theta)}_{\mathscr{L}^{\infty}_{p,\infty}(T) } +\|w_k\|^{2}_{\mathscr{L}^{\infty}_{p,\infty}(T)}\big)
\end{align*}
with
$$
\theta:=\frac 1 2 - \frac 1 {4p-8}\,,
$$
and the important point to notice  is that~$2\theta<1$. Using the fact that
$$
\begin{aligned}
    C^{2}(u^{(k)}_{L})  \big\|\nu_{k}\big\|_{\mathscr{L}^{\infty}_{p,\infty}(T)}^{2(1-\theta)} &=    C^{2}(u^{(k)}_{L})
\big \|L[u^{(k)}_{L}] w_k\big\|_{\mathscr{L}^{\infty}_{p,\infty}(T)}^{2(1-\theta)} \\
& \lesssim
    C^{2}(u^{(k)}_{L})   C\big(\|u_{0}\|_{\dot B^{s_{p}}_{p,p}},\|u\|_{L^{\infty} (0,T^*;\dot B^{s_p}_{p,p})}\big)^{2(1-\theta)}=:M^{2(1-\theta)}\,,
\end{aligned}
$$
which can readily be seen from the definition of~$w_{k}$, we find
$$
 \big \| B(K[u^{(k)}_{L}] \nu_{k},K[u^{(k)}_{L}]\nu_{k})\big\|_{\mathscr{L}^{\infty}_{\frac{3p}{2p-1} ,\infty}(T)  } \lesssim
\| \nu_{k} \|^{2\theta}_{\mathscr{L}^{\infty}_{\frac{3p}{2p-1}
    ,\infty}(T)   } M^{2(1-\theta)}
  +C^{2}(u_{L}^{(k)})\|w_k\|^{2}_{\mathscr{L}^{\infty}_{p,\infty}(T)}\, .
$$
 The cross term is easier to
deal with, as obviously, there is only one factor $\nu_{k}$.
 Therefore we finally get
\begin{multline}
\| \nu_{k}\|_{\mathscr{L}^{\infty}_{\frac{3p}{2p-1} ,\infty}(T)   } \lesssim     \|u_{L,k}\|^{2}_{\mathscr{L}^{\infty}_{\frac{6p}{2p+1}}(T)} +C^{2}(u_{L}^{(k)})\|w_k\|^{2}_{\mathscr{L}^{\infty}_{p,\infty}(T)}\\
{}+\|u_{L,k}\|_{\mathscr{L}^{\infty}_{\frac{6p}{2p+1}}(T)}  \|\nu_{k}\|^{\theta}_{\mathscr{L}^{\infty}_{\frac{3p}{2p-1} ,\infty} (T)  } M^{1-\theta}
 +   \|\nu_{k}\|^{2\theta}_{\mathscr{L}^{\infty}_{\frac{3p}{2p-1} ,\infty}(T)   }  M^{2(1-\theta)}
 \, ,
\end{multline}
and recalling~(\ref{linearcontrol}), we find
$$
\begin{aligned}
\| \nu_{k}\|_{\mathscr{L}^{\infty}_{\frac{3p}{2p-1} ,\infty} (T)  } \lesssim     C\big(\|u_0\|_{\dot B^{s_p}_{p,p}},\|u\|_{\mathscr{L}^{\infty}_{p,\infty}(T)}\big) +    \|u_0\|_{\dot B^{s_p}_{p,p}}   \|\nu_{k}\|^{\theta}_{\mathscr{L}^{\infty}_{\frac{3p}{2p-1} ,\infty} (T)  }    M^{1-\theta}
  \\  +   \|\nu_{k}\|^{2\theta}_{\mathscr{L}^{\infty}_{\frac{3p}{2p-1} ,\infty} (T)  } M^{2(1-\theta)}
  \, .
\end{aligned}
$$
This implies that
$$
\| \nu_{k}\|_{\mathscr{L}^{\infty}_{\frac{3p}{2p-1} ,\infty}  (T) } \lesssim F\big(\|u_0\|_{\dot B^{s_p}_{p,p}}, \| u\|_{L^\infty (0,T^{*};\dot B^{s_p}_{p,p})}\big)\,,
$$
and by Sobolev embedding, we infer in particular (abusing notation by retaining the $F$ notation for a different function !)
\begin{equation}
  \label{eq:11}
  \| \nu_{k}\|_{\mathscr{B}^{\infty}_{\frac{6p}{2p+1},\infty}(T)  } \lesssim F\big(\|u_0\|_{\dot B^{s_p}_{p,p}}, \| u\|_{L^\infty (0,T^{*};\dot B^{s_p}_{p,p})}\big)\,.
\end{equation}
Finally using the fact that thanks to~(\ref{estimateBrondinfini})
$$
\|w_{k}\|_{\mathscr{B}^{\infty}_{\frac{6p}{2p+1},\infty}(T)} \lesssim  C(u^{(k)}_{L})\big(\|w\|_{L^{\infty}(0,T;\dot B^{s_p}_{p,\infty})}+ \| \nu_{k}\|_{\mathscr{B}^{\infty}_{\frac{6p}{2p+1},\infty}(T)}\big )
$$
and that the previous bound on $\nu_{k}$ holds uniformly for
$T<T^{*}$, as the righthand  side of \eqref{eq:11} does not depend on
$T$, we have   proved Part (III) of Theorem \ref{expandprop}, and the proof of the theorem is now complete. \hfill $\Box$

\section{Invertibility of ``heat flow" perturbations of identity \label{invertibilitysection}}
\subsection{Invertibility in Besov spaces: proof of  Lemma \ref{invertab}}
First we will notice that the fact that~$L[v]$ belongs to~$\mathcal{L}_{c}(
\mathscr{B}^{{1:\infty}}_{q/2}(T))$ follows from the definition of~$
\mathscr{B}^{{1:\infty}}_{q/2}(T)$, the assumption that~$v
\in\mathscr{L}^{1:\infty}_{p}(T) $, and the linear estimates from
Appendix~\ref{perturbationtheoryNS}: let~$w \in \mathscr{B}^{{1:\infty}}_{q/2}(T)\hookrightarrow \mathscr{L}^{{1:\infty}}_{q/2}(T)$ and
$$
z :=L[v] w =w-2B_\sigma (v,w) \, .
$$
By its very definition, $z$ is a finite sum of
$B_{\sigma}(\cdot,\cdot)$ with appropriate entries.
Then
  \begin{equation}
    \label{eq:7}
    \partial_{t} z -\Delta z = \partial_{t} w -\Delta w +2\mathbb{P}
\nabla\cdot (v\otimes_{\sigma} w)\,,
  \end{equation}
and by product laws recalled in Appendix~\ref{paraproductsapp} (and $\frac{6p}{p+3} <q\leq p$) we have
$\nabla\cdot(v\otimes_{\sigma} w) \in \Delta
\mathscr{L}^{{1:\infty}}_{q/2}(T)$, with appropriate norm control, since in particular $w\in
  \mathscr{L}^{{1:\infty}}_{q/2}(T)$ and
$$
\|\nabla\cdot(v\otimes_{\sigma} w)\|_{ \Delta
\mathscr{L}^{{1:\infty}}_{q/2}(T)} \lesssim  \| v\|_{ \mathscr{L}^{{1:\infty}}_{p}(T)} \| w\|_{
   \mathscr{L}^{{1:\infty}}_{q/2}(T)}\, .
$$
So
$$
 \| (\partial_{t}-\Delta) z\|_{\Delta\mathscr{L}^{{1:\infty}}_{q/2}(T)}  \lesssim  \|
(\partial_{t}-\Delta) w\|_{\Delta\mathscr{L}^{{1:\infty}}_{q/2}(T)} +
 \|\nabla\cdot(v\otimes_{\sigma} w)\|_{\Delta \mathscr{L}^{{1:\infty}}_{q/2}(T)}
 $$
which  implies that
$$
\| z\|_{ \mathscr{B}^{{1:\infty}}_{q/2}(T)}    \lesssim \|
 w\|_{ \mathscr{B}^{{1:\infty}}_{q/2}(T)} +
 \| v\|_{ \mathscr{L}^{{1:\infty}}_{p}(T)} \| w\|_{
   \mathscr{L}^{{1:\infty}}_{q/2}(T)}
$$
and hence
$$
 \| z\|_{ \mathscr{B}^{{1:\infty}}_{q/2}(T)}     \lesssim
(1+ \| v\|_{ \mathscr{L}^{{1:\infty}}_{p}(T)} )\| w\|_{ \mathscr{B}^{{1:\infty}}_{q/2}(T)} \, .
$$
Next we check  that $L[v]$ is invertible. We seek a bound on $w$ in
terms of $z$ in the equation \eqref{eq:7}. If~$z$ belongs to~$
\mathscr{B}^{{1:\infty}}_{q/2}(T)$, there are a finite number of ~$f_k,
g_k\in \mathscr{L}^{1:\infty}_{q}(T)$,
$\tilde f_{k}\in \mathscr{L}^{1:\infty}_{p}(T)$ and $\tilde g_{k}
\in\mathscr{L}^{1:\infty}_{q/2}(T)$ such that~$z =\displaystyle  \sum_k B_\sigma
(f_k,g_k)+B(\tilde f_{k},\tilde g_{k})$, which implies that
\begin{equation}
  \label{eq:8}
  \partial_t w - \Delta w  + 2\P \nabla \cdot (v \otimes_\sigma w ) = \P
\nabla \cdot (\sum_k f_k \otimes_\sigma  g_k+\tilde
f_{k}\otimes_{\sigma}\tilde g_{k}) =\partial_{t}z-\Delta z \, ,
\end{equation}
which can be solved thanks to the results of Appendix~\ref{perturbationtheoryNS}:  according to Proposition~\ref{pdr1lin}, we have the estimate
\begin{equation}\label{curlybappref}
\|w\|_{\cl^{1:\infty}_{q/2}(T)} \leq F(\|v\|_{\cl^{1:\infty}_p(T)}) \|
\nabla \cdot (\sum_k f_k\otimes_{\sigma}g_k+\tilde
f_k\otimes_{\sigma}\tilde g_k) \|_{\Delta \cl^{1:\infty}_{q/2}(T)}\, ,
\end{equation}
which we rewrite $\|w\|_{\cl^{1:\infty}_{q/2}(T)} \leq F(v) \|
z\|_{\mathscr{B}^{{1:\infty}}_{q/2}(T)}$. From there, going back to the
equation \eqref{eq:8}, we recover the structure, as
$w=z+2B_{\sigma}(v,w)$ and we may estimate (using product rules)
\begin{align*}
  \|(\partial_{t}-\Delta) w \|_{\Delta\mathscr{L}^{{1:\infty}}_{q/2}(T)}
  &\lesssim \|(\partial_{t}-\Delta) z
  \|_{\Delta\mathscr{L}^{{1:\infty}}_{q/2}(T)}+\| \nabla \cdot
  (v\otimes_{\sigma} w)\|_{\Delta \mathscr{L}^{{1:\infty}}_{q/2}(T)}\\
 \| w \|_{\mathscr{B}^{{1:\infty}}_{q/2}(T)}
  &\lesssim \| z \|_{\mathscr{B}^{{1:\infty}}_{q/2}(T)}+\|
  v\|_{\mathscr{L}^{{1:\infty}}_{p}(T)}
\| w\|_{ \mathscr{L}^{{1:\infty}}_{q/2}(T)}\\
  &\lesssim \| z \|_{\mathscr{B}^{{1:\infty}}_{q/2}(T)}(1+\tilde F(v))
\end{align*}
which is the desired bound.

\medskip
\noindent
We now proceed with proving~(\ref{estimateBrondinfini}). From the previous argument with $q/2=6p/(2p+1)$ (which is such that $q>6p/(p+3)$), we have that $w=K[v] z$ is well-defined and $w\in \mathscr{B}^{1:\infty}_{q/2}(T)$. By embedding, we know that $z\in \mathscr{B}^{{\infty}}_{\frac{6p}{2p+1},\infty}(T)$ and we assume an a priori control: $w\in L^{\infty}(0,T; \dot B^{s_{p}}_{p,\infty})$; we now seek to estimate $w$ in $\mathscr{B}^{{\infty}}_{\frac{6p}{2p+1},\infty}(T)$ in terms of $z$ in the same space and the a priori control, without using the $ \mathscr{B}^{1:\infty}_{q/2}(T)$ norm of $z$. We simply estimate (where $C$ may change from line to line)
\begin{align*}
  \| w \|_{\mathscr{L}^{{\infty}}_{\frac{6p}{2p+1},\infty}(T)}
  &\lesssim \| z
  \|_{\mathscr{L}^{{\infty}}_{\frac{6p}{2p+1},\infty}(T)}+\| \nabla \cdot
  (v\otimes_{\sigma} w)\|_{\Delta \mathscr{L}^{{\infty}}_{\frac{6p}{2p+1},\infty}(T)}\\
  &\lesssim \| z \|_{\mathscr{L}^{{\infty}}_{\frac{6p}{2p+1},\infty}(T)}+\|
  v\|_{\mathscr{L}^{{1:\infty}}_{p}(T)}
\| w\|_{ \mathscr{L}^{{\infty}}_{\frac{6p}{2p-5},\infty}(T)}\\
  &\lesssim \| z \|_{\mathscr{L}^{{\infty}}_{\frac{6p}{2p+1},\infty}(T)}+C(v)\|w\|_{\mathscr{L}^{{\infty}}_{p,\infty}(T)}^{\frac 6 {2p-5}}\| w\|^{1-\frac 6 {2p-5}}_{ \mathscr{L}^{{\infty}}_{\frac{6p}{2p+1},\infty}(T)}\, ,\\
   \| K[v] z \|_{\mathscr{L}^{{\infty}}_{\frac{6p}{2p+1},\infty}(T)}
  &\lesssim \| z
  \|_{\mathscr{L}^{{\infty}}_{\frac{6p}{2p+1},\infty}(T)}+C(v)\|w\|_{\mathscr{L}^{{\infty}}_{p,\infty}(T)}
\end{align*}
 taking good note that product rules allow an
estimate on $v\otimes_{\sigma} w$. This ends the proof of Lemma \ref{invertab}.
\subsection{Invertibility in Kato spaces: proof of Lemma \ref{invertkato}}\label{proofinvertkato}
First we know that~$L[v]$ belongs to~$
\mathcal{L}_{c}(\mathscr{B}^{1:\infty}_{p/2}(T)
)
$, let us prove that it also belongs to~$
\mathcal{L}_{c}(\mathscr{B}^{1:\infty}_{p/2}(T)
 \cap\ck^{1}_{p}(T))$.
 Since  $\ell^p \subset \ell^\infty$, it suffices to prove that
\begin{equation}\label{curlykest}
  \|B_\sigma(v,f)\|_{ \ck^{1}_{p}(T)}
\lesssim \|v\|_{\mathscr{L}^{1:\infty}_{p,\infty}(T) \cap \ck^1_p(T) } \| f\|_{\cl^{\infty}_{p,\infty}(T) \cap \ck^{1}_{p}(T)} \, .
\end{equation}
We split the time integral in the definition of $B_\sigma$, into two parts, $B^{\flat}$ from~$0$ to~$t/2$ and $B^{\sharp}$ from~$t/2$ to~$t$. On the one hand we write
$$
\begin{aligned}
 B^{\flat}(v,f)&= \int_0^{\frac t2} e^{(t-s)\D} \mathbb{P} \nabla \cdot (v( s) \otimes_\sigma f(   s))\, ds\\
 &= e^{\frac t2 \D}
  \int_0^{\frac t2} e^{(\frac t2-s)\D} \mathbb{P} \nabla \cdot (v( s) \otimes_\sigma f(   s))\, ds \\
  & = e^{\frac t2 \D} B_\sigma(v,f)(t/2)
  \end{aligned}
$$
and the usual estimates of the heat flow (see Remark \ref{usefulembedding}) along with the usual product estimates imply that
$$
 \|e^{\frac t2 \D}B_\sigma(v,f)(t/2)\|_{\mathscr{K}^{1}_p(T)}  \lesssim \|B_\sigma(v,f)(t/2)\|_{\dot B^{s_p}_{p,\infty}} \lesssim \|v\|_{\mathscr{L}^{1:\infty}_{p,\infty}(T) }  \| f\|_{ \cl^{\infty}_{p,\infty}(T)} \, .
$$
On the other hand, for some constant~$c$,
$$
2^{j} \|\Delta_{j} B^{\sharp}(v,f)\|_{ L^{p}}\lesssim \int_{t/2}^{t} 2^{2j} e^{-c(t-s)2^{2j}}
\|\Delta_{j}(v\otimes f)\|_{ L^{p}} (s) \,ds\,,
$$
and then we use, for $s>t/2$,
\begin{align*}
  \|\Delta_{j}(v \otimes_\sigma  f)(s)\|_{L^p} & \lesssim \|(v \otimes  f)(s)\|_{L^p} \lesssim \|v(s)\|_{L^\infty} \| f(s)\|_{L^p}\\
 & \lesssim \frac 1 {\sqrt s} \frac{1 }{ s^{\frac 1 2 - \frac 3 {2p}}}
 \|v\|_{\mathscr{L}^{\infty}_{p,\infty}(T) \cap \ck^1_p(T) }
 \| f\|_{L^{\infty}(0,T;\dot B^{s_p}_{p,\infty})}^{\frac{p}{2p-3}}\|f\|_{\mathscr{K}^{1}_p(T)}^{\frac{p-3}{2p-3}}\\
 & \lesssim \frac 1 { t^{ 1  - \frac 3 {2p}}}
 \|v\|_{\mathscr{L}^{\infty}_{p,\infty}(T) \cap \ck^1_p(T) }
 \| f\|_{L^{\infty}(0,T;\dot B^{s_p}_{p,\infty})}^{\frac{p}{2p-3}}\|f\|_{\mathscr{K}^{1}_p(T)}^{\frac{p-3}{2p-3}}
\end{align*}
thanks to Remark~\ref{usefulembedding}, and that provides estimate (\ref{curlykest}) and hence the expected boundedness of~$L[v]$.

\medskip
\noindent Now let us prove the invertibility.  As in the proof of Lemma \ref{invertab}, suppose
$$
z =L[v] w =w-2B_\sigma (v,w) \, .
$$
Then
$$
\partial_t z - \Delta z =\partial_t w - \Delta w + 2\P \nabla \cdot (v \otimes_\sigma w ) \, ,
$$
so we need to solve in~$\mathscr{B}^{1:\infty}_{p/2}(T)
 \cap\ck^{1}_{p}(T)$ the equation
$$
\partial_t w - \Delta w  + 2\P \nabla \cdot (v \otimes_\sigma w ) = \sum_k \P \nabla \cdot ( f_k \otimes_\sigma  g_k)  \, ,
$$
for some finite number of $f_k$ and~$g_k$ in~$\mathscr{L}^{1:\infty}_{p}(T)$.   Actually we just need to check that~$w \in \ck^{1}_{p}(T)$ due to the (\ref{curlybappref}).  We check this by first writing equivalently (we assume zero data)
$$w=2B_\sigma(v,w) +z\, ,$$
and it suffices to consider~$B_\sigma(v,w) $. This comes from the calculations leading to (\ref{curlykest}), which imply   that
$$
\begin{aligned}
\| B_\sigma(v,w) \|_{\ck^{1}_{p}(T )} &\lesssim    \|v\|_{\mathscr{L}^{1:\infty}_{p}(T) }
 \| w\|_{L^{\infty}(0,T;\dot B^{s_p}_{p,\infty})}^{\frac{p}{2p-3}}\|w\|_{\mathscr{K}^{1}_p(T)}^{\frac{p-3}{2p-3}}
 +  \|v\|_{\mathscr{L}^{1:\infty}_{p}(T) }  \| w\|_{ L^{\infty}(0,T; \dot B^{s_p}_{p,\infty}) }\end{aligned}
$$
and the result now follows from (\ref{curlybappref}).
 \appendix

\section{Estimates on linear heat equations and perturbed Navier-Stokes}\label{perturbationtheoryNS}
\newcommand\normtb[5]{\|#5\|_{\mathcal{L}^{#1}_T\dot{B}^{#2}_{#3,#4}}}
\newcommand\normb[4]{\|#4\|_{\dot{B}^{#1}_{#2,#3}}}
\newcommand\vbar{\overline{v}}

\def\virgp{\raise 2pt\hbox{,}}

\noindent In this appendix, we  state and sketch the proof of  some useful results for linear heat equations, as well as a result on the Navier-Stokes equations in $\R^d$ which
may be seen as an extension of similar results in \cite{gip3,GKP}.  Let us fix here some notation: we define
$$H(g)(t):= \int_0^t e^{(t-s)\D}\P g(s)\, ds\, , \quad s_p:= -1 + \tfrac 3p\, ,
$$
(so that $B(u,v)(t):= H(- \nabla \cdot (u \otimes v))(t)$) and we recall the notation
$$
\Delta  {\mathscr L}^r_{p,q} (0,T)\ := \mathcal{L}^r ((0,T);\dot B^{s_p -2 + \frac 2r}_{p,q})
$$
for the space
where~$g$ should belong, in order for~$H(g)$ to be in~$\cl^{r:\infty}_{p,q}(T)$:
 we recall indeed the standard heat estimates  for $r,p,q \in [1,\infty]$,  thanks to~(\ref{genheatest}),
\begin{equation}\label{heatest}
\|H(g)\|_{\cl^{r:\infty}_{p,q}(T)} \lesssim \|g\|_{\Delta  {\mathscr L}^r_{p,q}(T)}\, ,
\end{equation}
and
\begin{equation}\label{dataheatest}
\|\etl f_0\|_{\cl^{r:\infty}_{p,q}(T)} \lesssim \|f_0\|_{\dot B^{s_p}_{p,q}}\, .
\end{equation}
\ \\
Let us first study a linear equation of the type
 \begin{equation}\label{perteqlin}
 w(t) = 2B_\sigma(v,w)(t) + H(f)(t) \, .
\end{equation}
 \begin{prop}\label{pdr1lin}
Given~$3<q\leq p <\infty$, there is a
positive
non decreasing function~$F$ such that the   following holds.  Assume that~$v \in \cl^{1:\infty}_p(T)$,
$f \in \Delta  {\mathscr L}^{1:\infty}_{q/2}(T)$
for  some~$T>0$. Then there is a unique  solution $w\in   \cl^{1:\infty}_{q/2}(T)$
to~{\rm(\ref{perteqlin})}, which satisfies
$$
\|w\|_{ \cl^{1:\infty}_{q/2(T)}} \leq    \|f\|_{ \Delta  {\mathscr L}^{1}_{q/2}(T)}    F(\|v\|_{ \cl^{1:\infty}_q(T)})\, .
$$
\end{prop}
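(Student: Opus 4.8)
The plan is to solve \eqref{perteqlin} by a fixed-point argument in the space $\cl^{1:\infty}_{q/2}(T)$, treating the term $2B_\sigma(v,\cdot)$ as a (small-in-time or small-coefficient) perturbation. First I would set up the affine map
$$
\Phi(w) := 2B_\sigma(v,w) + H(f)
$$
on $\cl^{1:\infty}_{q/2}(T)$. The source term is already under control: by the heat estimate \eqref{heatest} one has $\|H(f)\|_{\cl^{1:\infty}_{q/2}(T)} \lesssim \|f\|_{\Delta\mathscr L^{1}_{q/2}(T)}$, where I use that the $\Delta\mathscr L^{1}_{q/2}$ norm dominates the full $\Delta\mathscr L^{1:\infty}_{q/2}$ scale via Minkowski \eqref{minkowski} (or one simply carries the intersection norm throughout). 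The linear part is estimated by the product rules in Appendix \ref{paraproductsapp}: since $6p/(p+3)<q\le p$, the paraproduct/remainder estimates give
$$
\|B_\sigma(v,w)\|_{\cl^{1:\infty}_{q/2}(T)} \lesssim \|\P\nabla\cdot(v\otimes_\sigma w)\|_{\Delta\mathscr L^{1:\infty}_{q/2}(T)} \lesssim \|v\|_{\cl^{1:\infty}_q(T)}\,\|w\|_{\cl^{1:\infty}_{q/2}(T)}\,,
$$
so $\Phi$ is affine with Lipschitz constant $C\|v\|_{\cl^{1:\infty}_q(T)}$.

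When $C\|v\|_{\cl^{1:\infty}_q(T)}<1/2$ the map $\Phi$ is a contraction on $\cl^{1:\infty}_{q/2}(T)$, giving a unique fixed point $w$ with
$$
\|w\|_{\cl^{1:\infty}_{q/2}(T)} \le \frac{\|H(f)\|_{\cl^{1:\infty}_{q/2}(T)}}{1-C\|v\|_{\cl^{1:\infty}_q(T)}} \le 2\,C'\,\|f\|_{\Delta\mathscr L^{1}_{q/2}(T)}\,,
$$
which is the claimed bound with a constant function $F$. To remove the smallness assumption on $\|v\|_{\cl^{1:\infty}_q(T)}$ I would subdivide $[0,T]$ into finitely many subintervals $[T_i,T_{i+1}]$ on each of which $\|v\|_{\cl^{1:\infty}_q(T_i,T_{i+1})}$ is small — this is possible because $v\in\cl^{1:\infty}_q(T)$ implies absolute continuity of $t\mapsto\|v\|_{\cl^{1:\infty}_q(0,t)}$ with respect to the measure $\|v\|^{\text{(something)}}$, i.e. the tail norm can be made arbitrarily small; the number $N$ of such intervals is controlled by $\|v\|_{\cl^{1:\infty}_q(T)}$ alone. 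One then solves the equation successively on each subinterval, on $[T_i,T_{i+1}]$ absorbing the effect of the data $w(T_i)$ from the previous step into the source via the homogeneous heat flow $e^{(t-T_i)\Delta}w(T_i)$ and the estimate \eqref{dataheatest}. Concatenating, and tracking how the constants compound over the $N$ steps, yields a bound of the form $\|w\|_{\cl^{1:\infty}_{q/2}(T)}\le \|f\|_{\Delta\mathscr L^{1}_{q/2}(T)}\,F(\|v\|_{\cl^{1:\infty}_q(T)})$ for a non-decreasing $F$; uniqueness globally on $[0,T]$ follows by the same subinterval argument applied to the difference of two solutions.

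The main obstacle I anticipate is bookkeeping rather than analysis: making the subdivision argument clean, in particular verifying that the number of subintervals and the growth of the multiplicative constants at each concatenation step depend only on $\|v\|_{\cl^{1:\infty}_q(T)}$ (and not on finer structure of $v$ or on $T$), so that the final constant is genuinely a function $F$ of a single scalar. A secondary technical point is that the natural bilinear estimate produces a $\Delta\mathscr L^{1:\infty}_{q/2}$-norm on the right-hand side while the statement only asks for the $\Delta\mathscr L^{1}_{q/2}$-norm of $f$; one checks that the intersection norm on the solution side is nonetheless controlled, since the heat operator $H$ maps $\Delta\mathscr L^{1}_{q/2}$ into the full scale $\cl^{1:\infty}_{q/2}$ by \eqref{heatest}, and the bilinear term is handled entirely within the $\cl^{1:\infty}_{q/2}$ scale. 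Everything else is a direct application of the heat and product estimates collected in the appendices.
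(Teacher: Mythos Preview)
Your overall strategy --- contraction on a short interval, then subdivide and iterate --- matches the paper's. The one genuine gap is in the subdivision step: you claim one can split $[0,T]$ into finitely many pieces $[T_i,T_{i+1}]$ on each of which $\|v\|_{\cl^{1:\infty}_q(T_i,T_{i+1})}$ is small. This is false as stated, because the norm $\cl^{1:\infty}_q$ contains the endpoint piece $\tl^\infty([T_i,T_{i+1}];\dot B^{s_q}_{q,q})$, and an $L^\infty$-in-time quantity does not shrink with the interval (take $v$ constant in time). Your parenthetical ``absolute continuity of $t\mapsto\|v\|_{\cl^{1:\infty}_q(0,t)}$'' signals the right worry but does not resolve it.

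The fix, which is exactly what the paper does, is to sharpen the bilinear estimate so that only a \emph{finite} time-Lebesgue exponent appears on $v$. Using the product rules in Appendix~\ref{paraproductsapp} and H\"older in time one gets, for some finite $\rho>2$ and any subinterval $(\alpha,\beta)\subset[0,T]$,
\[
\|\nabla\cdot(v\otimes_\sigma w)\|_{\Delta\mathscr L^{1}_{q/2}(\alpha,\beta)}
\ \lesssim\ \|v\|_{\mathscr L^{1:\rho}_{p}(\alpha,\beta)}\,\|w\|_{\mathscr L^{1:\infty}_{q/2}(\alpha,\beta)}\,.
\]
Since $\rho<\infty$, dominated convergence in $\ell^{p}$ gives that $\|v\|_{\mathscr L^{1:\rho}_{p}(\alpha,\beta)}\to 0$ as $\beta-\alpha\to 0$, so the subdivision is legitimate, and the number of subintervals needed depends only on $\|v\|_{\mathscr L^{1:\rho}_{p}(0,T)}\le \|v\|_{\mathscr L^{1:\infty}_{p}(0,T)}$. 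With this correction (replace $\|v\|_{\cl^{1:\infty}_q}$ by $\|v\|_{\cl^{1:\rho}_p}$ in your Lipschitz estimate), the rest of your plan goes through verbatim and coincides with the paper's argument.
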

\begin{proof}
The result is rather straightforward:  by the product rules recalled in Appendix~\ref{paraproductsapp}
 one has indeed
 for all~$3<q \leq p$  and for some~$\rho>2$, for any subinterval~$(\alpha,\beta)$  of~$[0,T]$
 $$
 \|\nabla \cdot  (v \otimes_\sigma w)\|_{\Delta\mathscr{L}^{1}_{  q/2}(\alpha,\beta)} \lesssim
 \|v\|_{\mathscr{L}^{1:\rho}_{p}(\alpha,\beta)}\|w\|_{\mathscr{L}^{1:\infty}_{q/2}(\alpha,\beta)}  \, .
 $$
 This gives thanks to~(\ref{heatest})
 $$
 \|w\|_{ \cl^{1:\infty}_{q/2}(\alpha,\beta)}  \lesssim \|w(\alpha)\|_{\dot B^{s_{q/2}}_{q/2}} +  \|v\|_{\mathscr{L}^{1:\rho}_{p} (\alpha,\beta) }\|w\|_{\mathscr{L}^{1:\infty}_{q/2}(\alpha,\beta)} +   \|f \|_{ \Delta  {\mathscr L}^{1}_{q/2}(\alpha,\beta)}   \, .
 $$
 The result follows by cutting~$(0,T)$ small enough such subintervals, the number of which is an increasing function of~$ \|v\|_{\mathscr{L}^{1:\rho}_{p} (\alpha,\beta)}$.
  \end{proof}
 \medskip
 \noindent  Finally we wish to solve the equation
\begin{equation}\label{perteq}
w(t) = \etl w_0 + B(w,w)(t) + 2B_\sigma(v_1 + v_2,w)(t) + H(f_1 + f_2)(t)\end{equation}
were~$v_1,v_2,f_1$ and~$f_2$ have various regularities, adapted to the needs of this paper. The statement is the following.

\begin{prop}\label{pdr1}
Given~$q \in (3,\infty)$  and~$N \geq 1$   such that~$3(N-1) \leq q$, consider~$\delta $ in~$ (\frac3q,1)$ and define~$r$ and $r'$  by~$\tfrac1r = \tfrac Nq + \tfrac{1-\delta}2$ and $\frac 1r + \frac 1{r'}=1$.
\medskip
\noindent There is a
positive
non decreasing function of two variables~$F$ such that the   following holds.  Assume that~$v_1 \in \cl^{a:r'}_q(T)$,
for some~$1 \leq a < 2$
$v_2 \in \cl^{\frac qN}_{\frac qN}(T)$,
$f_1 \in \Delta  {\mathscr L}^{q_1}_q(T)$ and $f_2 \in \Delta  {\mathscr L}^{q_2}_q(T)$,
for  some~$T>0$, with $1\leq q_1,q_2 \leq r$. If
\begin{equation}\label{pertsmall}
\|w_0\|_{\besqq} + \|f_1\|_{\Delta  {\mathscr L}^{q_1}_q(T)} + \|f_2\|_{\Delta  {\mathscr L}^{q_2}_q(T)} \leq \Big( F \big(
\|v_1\|_{\cl^{a:r'}_q(T)}, \|v_2\|_{ \cl^{\frac qN}_{\frac qN}(T)}
\big) \Big) ^{-1}
\, ,
\end{equation}
then there is a unique  solution $w\in \mathcal{C}([0,T];\besqq)\cap \cl^{r:\infty}_q(T)$
to~{\rm(\ref{perteq})}, which satisfies
\begin{equation}\label{pertcontrol}
\|w\|_{\cl^{r:\infty}_q(T)} \leq   \left( \|w_0\|_{\besqq} + \|f_1\|_{\Delta  {\mathscr L}^{q_1}_q(T)} + \|f_2\|_{\Delta  {\mathscr L}^{q_2}_q(T)}
 \right)  F \big(
\|v_1\|_{\cl^{a:r'}_q(T)}, \|v_2\|_{ \cl^{\frac qN}_{\frac qN}(T)}
\big)\, .
\end{equation}
\end{prop}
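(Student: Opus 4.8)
The plan is to construct $w$ by a fixed-point argument. The only feature that prevents a direct contraction on all of $[0,T]$ is that the drift terms $2B_\sigma(v_1+v_2,w)$ are linear in $w$ but carry the possibly large coefficients $\|v_1\|_{\cl^{a:r'}_q(T)}$ and $\|v_2\|_{\cl^{q/N}_{q/N}(T)}$; I would therefore split $[0,T]$ into finitely many subintervals on which these coefficients are small, solve on each, and concatenate, exactly as in the proof of Proposition~\ref{pdr1lin}.

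\emph{Building-block estimates.} On a subinterval $I=[\alpha,\beta]\subseteq[0,T]$, denoting by $e^{(t-\alpha)\Delta}$ the heat semigroup started at time $\alpha$ and by $H,B,B_\sigma$ the corresponding Duhamel operators on $I$, I would first record: from \eqref{dataheatest} and \eqref{heatest}, $\|e^{(t-\alpha)\Delta}\omega\|_{\cl^{r:\infty}_q(I)}\lesssim\|\omega\|_{\besqq}$ for $\omega\in\besqq$, and $\|H(f_i)\|_{\cl^{r:\infty}_q(I)}\lesssim\|f_i\|_{\Delta\mathscr{L}^{q_i}_q(I)}$, the latter since $q_i\le r$ gives $H(f_i)\in\cl^{q_i:\infty}_q\hookrightarrow\cl^{r:\infty}_q$; and, from the paraproduct and Bernstein estimates of Appendix~\ref{paraproductsapp} (with constants independent of $I$, all norms involved being scaling invariant),
\[\|B(w,w')\|_{\cl^{r:\infty}_q(I)}\lesssim\|w\|_{\cl^{r:\infty}_q(I)}\|w'\|_{\cl^{r:\infty}_q(I)},\]
\[\|B_\sigma(v_1,w)\|_{\cl^{r:\infty}_q(I)}\lesssim\|v_1\|_{\cl^{a:r'}_q(I)}\|w\|_{\cl^{r:\infty}_q(I)},\qquad\|B_\sigma(v_2,w)\|_{\cl^{r:\infty}_q(I)}\lesssim\|v_2\|_{\cl^{q/N}_{q/N}(I)}\|w\|_{\cl^{r:\infty}_q(I)}.\]
Here the hypotheses $3(N-1)\le q$, $\delta\in(\tfrac3q,1)$, $\tfrac1r=\tfrac Nq+\tfrac{1-\delta}2$ and $\tfrac1r+\tfrac1{r'}=1$ are exactly what place all the spatial Besov regularities in the admissible range of these product laws: the $v_2$-estimate uses the time split $\tfrac Nq+\tfrac{1-\delta}2=\tfrac1r$ together with $v_2\in\cl^{q/N}_{q/N}(I)$, and the $v_1$-estimate the time split $\tfrac1{r'}+\tfrac1r=1$.

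\emph{Subdivision and iteration.} Fix a small absolute $\eta>0$ determined by the constants above. Since $a<2$, $r'$ and $q/N$ are all finite, the maps $I\mapsto\|v_1\|_{\cl^{a:r'}_q(I)}$ and $I\mapsto\|v_2\|_{\cl^{q/N}_{q/N}(I)}$ are absolutely continuous, so — exactly as in Proposition~\ref{pdr1lin} — $[0,T]$ splits into $M=M\big(\|v_1\|_{\cl^{a:r'}_q(T)},\|v_2\|_{\cl^{q/N}_{q/N}(T)}\big)$ consecutive intervals $I_1,\dots,I_M$, $I_k=[t_{k-1},t_k]$, on each of which $\|v_1\|_{\cl^{a:r'}_q(I_k)}+\|v_2\|_{\cl^{q/N}_{q/N}(I_k)}\le\eta$. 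On $I_1$, with datum $w_0$, the map $\Phi(w)(t):=\etl w_0+H(f_1+f_2)(t)+B(w,w)(t)+2B_\sigma(v_1+v_2,w)(t)$ is, by the estimates above, a contraction of the ball of radius $\varrho_1:=2C_0\big(\|w_0\|_{\besqq}+\|f_1\|_{\Delta\mathscr{L}^{q_1}_q(I_1)}+\|f_2\|_{\Delta\mathscr{L}^{q_2}_q(I_1)}\big)$ in $\cl^{r:\infty}_q(I_1)$, provided $C_0\eta\le\tfrac14$ and $\varrho_1$ lies below an absolute threshold — both guaranteed by choosing $F$ in \eqref{pertsmall} large enough. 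Reading off $\|w(t_1)\|_{\besqq}\lesssim\|w_0\|_{\besqq}+\|f_1\|_{\Delta\mathscr{L}^{q_1}_q(I_1)}+\|f_2\|_{\Delta\mathscr{L}^{q_2}_q(I_1)}$ from the equation via the heat estimates, one restarts on $I_2$ with datum $w(t_1)$ and iterates; since there are only $M$ steps, concatenating yields $w\in\cl^{r:\infty}_q(T)$ with the bound \eqref{pertcontrol}, the function $F$ absorbing the $M$-fold accumulation — and $F$ must be fixed at the outset large enough that \eqref{pertsmall} keeps every intermediate radius below the contraction threshold. Uniqueness follows on each $I_k$ by applying the same estimates to the difference of two solutions, which solves a linear equation with coefficient $\le\eta<1$, and $w\in\mathcal{C}([0,T];\besqq)$ follows from the Duhamel representation together with the continuity built into the heat and product estimates.

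\emph{Main obstacle.} Once the product laws are in hand the rest is routine; the delicate points are (i) verifying term by term that the paraproduct estimates of Appendix~\ref{paraproductsapp} genuinely apply under the precise numerology $3(N-1)\le q$, $\delta\in(\tfrac3q,1)$, $\tfrac1r=\tfrac Nq+\tfrac{1-\delta}2$, and (ii) the bookkeeping ensuring that the number $M$ of subintervals, and hence the constant in \eqref{pertcontrol}, depends only on $\|v_1\|_{\cl^{a:r'}_q(T)}$ and $\|v_2\|_{\cl^{q/N}_{q/N}(T)}$ and not on finer information about $v_1,v_2$ — which, as in Proposition~\ref{pdr1lin}, relies on reducing the Chemin--Lerner norms to honest Lebesgue-in-time norms (taking the interpolation index equal to the spatial Lebesgue index so that Fubini applies, or using a Bernstein embedding) before invoking absolute continuity of the $L^\sigma$-in-time norm.
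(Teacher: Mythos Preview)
Your proposal is correct and follows essentially the same approach as the paper: establish the key product/heat estimates, split $[0,T]$ into finitely many subintervals on which the drift norms $\|v_1\|_{\cl^{a:r'}_q}$ and $\|v_2\|_{\cl^{q/N}_{q/N}}$ are small, close by contraction on each, and concatenate. The paper's proof differs only in that it actually carries out the paraproduct verification for the most delicate term $B_\sigma(v_2,w)$ (introducing the auxiliary exponent $r_\varepsilon$ with $1-\tfrac{3-\varepsilon}{2q}=\tfrac1{r_\varepsilon}=\tfrac Nq+\tfrac1{\bar r_\varepsilon}$ to make the numerology work), which is precisely the point you flag as the main obstacle but do not detail.
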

\noindent
{\bf Proof of Proposition~\ref{pdr1}. } $ $ Note in the above that $1<r<2<r'<q<\infty$ and $r < \frac qN <
\infty$.
The proof of the proposition is rather classical, and follows for instance the methods of~\cite{gip3} (see in particular Proposition 4.1 and Theorem 3.1
of~\cite{gip3}).  We shall not give all the details of the proof but just prove the main key  estimate, namely that there exists some $K>1$ such that for any $(\alpha,\beta)\subseteq (0,T)$,
\begin{equation}\label{pertest}
\begin{aligned}
\|w\|_{\cl^{r:\infty}_q(\alpha,\beta)} & \leq K \Big( \|w(\alpha)\|_{\besqq} + \|f_1\|_{\Delta  {\mathscr L}^{q_1}_q(T)} + \|f_2\|_{\Delta  {\mathscr L}^{q_2}_q(T)} \\
 & \qquad + \big(\|w\|_{\cl^{r:r'}_q(\alpha,\beta)} +V(\alpha,\beta)\big)\|w\|_{\cl^{r:\infty}_q(\alpha,\beta)} \Big) \, ,
\end{aligned}
\end{equation}
where~$V(\alpha,\beta):= \|v_1\|_{\cl^{a:r'}_{q}(\alpha,\beta)}
+ \|v_2\|_{\cl^{\frac qN}_{\frac qN}(\alpha,\beta)}$.

\medskip
\noindent
Assuming that estimate is true, one recovers a global bound on~$w$ by  splitting $(0,T)$ into $m$
small sub-intervals where $V(\alpha,\beta) \leq \frac 1{4K}$ (since $r',\frac qN <\infty$); the size of~$m$ is tied to the function~$F$ above. One can    use the time-continuity of~$w$
to propagate (\ref{pertest}) from one subinterval to the next as long as $\|w\|_{\cl^{r:r'}_q(\alpha,\beta)} \leq \frac 1{4K}$ there as well.
 This along with assumption (\ref{pertsmall}) together imply that in fact~$\|w\|_{\cl^{r:\infty}_q(0,T)}\leq \frac 1{4K}$ which concludes the proof.

\medskip
\noindent
 To prove (\ref{pertest}), for $\e >0$, define  $r_\e$ and $\bar r_\e$
by
$$  1- \frac{d-\e}{2p} = \frac 1{r_\e}  = \frac Nq + \frac 1{\bar r_{\e}}\, \cdotp$$
Due to the assumptions on $q$, $N$ and $\d$, for a sufficiently small $\e$ we have $1\leq r_\e \leq r \leq \bar r_\e \leq 2$.
\\\\
We shall only study the terms containing~$v_1$ and~$v_2$ as the bilinear term in~$w$ is dealt with classically using the estimates of Appendix~\ref{paraproductsapp}.
Let us recall the paraproduct decomposition and the abbreviated notation
$$
fg =  {\mathcal T}_f g +  {\mathcal T}_g f +  {\mathcal R} (f,g) =:  {\mathcal T}_f g + {\mathcal R}_g f \, .
$$
Let us    set~${\hat q}:=\frac qN \leq q$. We can then write, since $r_\e \leq r$,
$$
\|B(v_2,w)\|_{\cl^{r:\infty}_q}
\leq \|H(\nabla \cdot {\mathcal R}_{v_2}w )\|_{\cl^{r_\e:\infty}_q} + \|H(\nabla \cdot {\mathcal T}_w {v_2}) \|_{\cl^{r:\infty}_{{\hat q}}} \, .
$$
Then (\ref{heatest}) gives
$$
\|B({v_2},w)\|_{\cl^{r:\infty}_q}
\lesssim \| {\mathcal R}_{v_2} w\|_{\tl^{r_\e}\dot B^{s_q -1 + \frac 2{r_\e}}_{q,q}}+ \| {\mathcal T}_w{v_2}\|_{\tl^{r}\dot B^{s_{{\hat q}} -1 + \frac 2{r}}_{{\hat q},{\hat q}}}
$$
So noticing that~$ s_q -1 + \frac 2{r_\e}=\frac \e q>0$, that~$s_\infty + \frac {2}{\hat q} <0$ and that~$s_\infty + 1-\d =-\d <0$, the product rules (\ref{prodest}) give
$$
\|B({v_2},w)\|_{\cl^{r:\infty}_q}
\lesssim\|{v_2}\|_{\mathcal{L}^{{\hat q}}\dot B^{s_\infty + \frac {2}{\hat q} }_{\infty,\infty}} \|w\|_{\mathcal{L}^{\bar r_{\e}}\dot B^{s_q + \frac 2{\bar r_\e}}_{q,q}}
+
\|{v_2}\|_{\mathcal{L}^{{\hat q}}\dot B^{s_{{\hat q}} + \frac {2}{\hat q}}_{{\hat q},{\hat q}}} \|w\|_{\mathcal{L}^{\frac 2{1-\d}}\dot B^{ -\d }_{\infty,\infty}} \, .
$$
Embeddings~(\ref{bernst}), along with the fact that~$r\leq \bar r_{\e} \leq 2$, give finally
$$
\|B(v_2,w)\|_{\cl^{r:\infty}_q}
\lesssim\|{v_2}\|_{\mathscr{L}^{{\hat q}}_{{\hat q}}} \|w\|_{\mathscr{L}^{\bar r_{\e}:\frac 2{1-\d}}_{q}}
\lesssim \|{v_2}\|_{\mathscr{L}^{{\hat q}}_{{\hat q}}} \|w\|_{\mathscr{L}^{r:\infty}_{q}}\, .
$$
Using this and a similar estimate for the other term in $2B_\sigma({v_2},w)$, the result~(\ref{pertest}) follows in view of (\ref{heatest}) and (\ref{dataheatest}) and the simpler estimate
$$\|B(v_1,w)\|_{\cl^{1:\infty}_q} \lesssim  \|v_1\|_{\mathscr{L}^{a:r'}_{q}} \|w\|_{\mathscr{L}^{r:a'}_{q}}$$
(with $\frac 1a + \frac 1{a'}=1$) which is valid since $r,a <2$.  We omit the details. \hfill $\Box$

\section{Product laws, embeddings and heat estimates \label{paraproductsapp}}

\noindent
We first recall the following standard product laws in Besov spaces, which use the theory of paraproducts.
For any distributions $f$ and $g$ which are equal as distributions to the sum of their Littlewood-Paley decompositions, we can write their product as a sum of three terms denoted as follows:
$$fg = \mathcal{T}_fg + \mathcal{T}_gf + \mathcal{R}(f,g)$$
(referred to as the low-high, high-low and high-high frequency interactions respectively), and we sometimes use the abbreviated notation
$${\mathcal R}_g f:= {\mathcal T}_g f +  {\mathcal R} (f,g)\, .$$
These terms moreover have the following properties:  for any $s_i, t_i \in \R$ and
$\bar p_i,\bar q_i , p_i, q_i,  p_i', q_i'\in [1,\infty]$
related by
$$\frac 1{\bar p_i} = \frac 1{p_i} + \frac 1{p_i'} \qquad \textrm{and} \qquad \frac 1{\bar q_i} = \frac 1{q_i} + \frac 1{q_i'}\ ,$$
we have \begin{equation}\label{prodest}
\begin{array}{c}
\|\mathcal{T}_fg\|_{\dot B^{s_1 + t_1}_{\bar{p}_1,\bar{q}_1}}
\lesssim
\|f\|_{\dot B^{s_1}_{p_1,q_1}}\|g\|_{\dot B^{t_1}_{p_1',q_1'}} \qquad \textrm{as long as}\ s_1 < 0 \quad \textrm{and}
\\
\|\mathcal{R}(f,g)\|_{\dot B^{s_2 + t_2}_{\bar{p}_2,\bar{q}_2}}
\lesssim
\|f\|_{\dot B^{s_2}_{p_2,q_2}}\|g\|_{\dot B^{t_2}_{p_2',q_2'}} \qquad \textrm{as long as}\ s_2 + t_2 > 0\ .
\end{array}
\end{equation}
That is, in the low-high or high-low interactions, the term with the low frequencies must always have a negative regularity, and in the high-high interactions the sum of the regularities must be positive.
\\\\
We now recall the following standard embedding which follows from Bernstein's inequalities,
\begin{equation}\label{bernst}
\sigma \in \R,\
1\leq p_1 \leq p_2 \leq \infty\, ,   1\leq q \leq \infty \, \, \Longrightarrow \, \, \dot B^{\sigma}_{p_1,q}(\R^3) \hookrightarrow \dot B^{\sigma - 3\left(\tfrac 1{p_1} - \tfrac 1{p_2}\right)}_{p_2,q}(\R^3)
\end{equation}
as well as the fact that $\dot B^{-1+\frac dp}_{p,q} (\R^3)
\hookrightarrow L^3(\R^3)$ if $1\leq p,q<3$ (cf., e.g., \cite{gk}),
and $\dot B^{-1+\frac 3p}_{p,\infty} (\R^3)
\hookrightarrow L^{3,\infty}(\R^3)$, where the last space stands for
the weak Lebesgue space.

\medskip

\noindent Let us also recall the following standard heat estimate.  For any $p\in [1,\infty]$, there exist some $c_0, c >0$ such that for any $f \in \mathcal{S}'$ and $j\in \Z$,
\begin{equation}\label{genheatest}
{\|\D_j(\etl f)\|_p \leq c_0 e^{-ct2^{2j}}\|\D_j f\|_p}\, .
\end{equation}
Hence for $ 0 < t  \leq \infty$, recalling
$$B (u,v)(t):= \int_{0}^t e^{(t-\tau)\D}\mathbb{P}\nabla \cdot (u(\tau)\otimes v(\tau))\, d\tau\, ,$$
H\"older's inequality implies that for any $p,q \in [1,\infty]$, $s\in \R$ and $1\leq r \leq \infty$,
\begin{equation}\label{genbilinesta}
\|B (u,v)(t)\|_{\dot B^{s+ 2 (1 - \frac 1r )}_{p,q}} \lesssim \|\mathbb{P}\nabla \cdot (u\otimes v)\|_{\tl^{r}( 0,t; \dot B^s_{p,q})}
\, ,
\end{equation}
and hence moreover by Bernstein's inequalities and the zero-order nature of $\P$,
\begin{equation}\label{genbilinestaaa}
\|B (u,v)(t)\|_{\dot B^{s+ 2 (1 - \frac 1r )}_{p,q}} \lesssim \|u\otimes v\|_{\tl^{r}(0,t; \dot B^{s+1}_{p,q})}\, .
\end{equation}
More generally
 for any $\tilde r\in [r ,\infty]$ Young's inequality for convolutions implies
\begin{equation}\label{genbilinest}
\|B (u,v)\|_{\tl^{\tilde r} (0,t; \dot B^{s+2 + 2 (\frac 1{\tilde r} - \frac 1r )}_{p,q}  )} \lesssim  \|u\otimes v\|_{\tl^{r}(0,t; \dot B^{s+1}_{p,q})}
\end{equation}
which can be combined with the product laws and embeddings above to give various ``bilinear estimates".

\medskip
\noindent
We shall also need   estimates on the bilinear form in Kato-type spaces $\ck_p(T)$ (cf.~(\ref{katospacedef})), which can be obtained from  standard estimates for the linear Stokes kernel (see, e.g., \cite{Planchon}): there exists a universal constant $c >0$  such that for any~$T >0$ and any $p,q,r \in [1,\infty]$ such that
$$\displaystyle{0 <  \frac 1{p} + \frac 1{q} < \frac 13 + \frac 1{r} \quad \textrm{and} \quad \frac 1{r} \leq \frac 1{p} + \frac 1{q} \leq 1 \, , }$$
for any $f$ and $g$ one has
\begin{equation}\label{estimateKr}
\begin{array}{c}
\displaystyle{\|B (f,g)\|_{\mathscr{K}_{r}(T)} \leq c[(\tfrac 1p + \tfrac 1q)^{-1} + (\tfrac 13 + \tfrac 1r - \tfrac 1p - \tfrac 1q)^{-1}] \|f\|_{\mathscr{K}_{p}(T)}\|g\|_{\mathscr{K}_{q}(T)} } \, .
\end{array}
\end{equation}
We also recall the relationship between Kato and negative-regularity Besov spaces: for any $p>3$, there exists some $c=c(p)>0$ such that for any $v_0 \in \dot B^{-1+\frac 3p}_{p,\infty}$,
\begin{equation}\label{heatbesov}
c^{-1} \|v_0\|_{\dot B^{-1+\frac 3p}_{p,\infty}} \leq  \|\etl v_0\|_{\mathscr{K}_p(0,\infty)} \leq c \|v_0\|_{\dot B^{-1+\frac 3p}_{p,\infty}}   \, ,
\end{equation}
which in particular gives an estimate on heat flows with initial data
in such Besov spaces.  For a reference for all of the above, see
\cite{Planchon,gip3} and references therein.

\def\ocirc#1{\ifmmode\setbox0=\hbox{$#1$}\dimen0=\ht0 \advance\dimen0
  by1pt\rlap{\hbox to\wd0{\hss\raise\dimen0
  \hbox{\hskip.2em$\scriptscriptstyle\circ$}\hss}}#1\else {\accent"17 #1}\fi}
  \def\polhk#1{\setbox0=\hbox{#1}{\ooalign{\hidewidth
  \lower1.5ex\hbox{`}\hidewidth\crcr\unhbox0}}}

\end{document}